\theoremstyle{plain}
\newtheorem{master}{Master}[section]
\newtheorem{prop}[master]{Proposition}
\newtheorem{thm}[master]{Theorem}
\newtheorem{fact}[master]{Fact}
\newtheorem{lem}[master]{Lemma}
\newtheorem{cor}[master]{Corollary}
\newtheorem{question}[master]{Question}
\newtheorem{claim}[master]{Claim}
\theoremstyle{definition}
\newtheorem{defin}[master]{Definition}
\newtheorem{observation}[master]{Observation}
\theoremstyle{remark}
\newtheorem{remark}[master]{Remark}
\numberwithin{equation}{section}
\newcommand{\Ur}{\mathbb{U}}
\newcommand{\Rea}{\mathbb{R}}
\newcommand{\Nat}{\mathbb{N}}
\newcommand{\Int}{\mathbb{Z}}
\newcommand{\Rat}{\mathbb{Q}}
\newcommand{\Age}{\mathcal{K}}
\newcommand{\Gr}{\mathcal{G}}
\newcommand{\dist}{\mathrm{dist}}
\newcommand{\intdist}{\mathrm{int\, dist}}
\begin{document}
\title{Metrical universality for groups}
\author[M. Doucha]{Michal Doucha}
\address{Institute of Mathematics\\ Polish Academy of Sciences\\
00-656 Warszawa, Poland}
\address{Laboratoire de Math\' ematiques de Besan\c con\\Universit\' e de Franche-Comt\' e\\France} 
\email{m.doucha@post.cz}
\thanks{The author was supported by funds allocated to the implementation of the international co-funded project in the years 2014-2018, 3038/7.PR/2014/2, and by the EU grant PCOFUND-GA-2012-600415.}
\begin{abstract}
We prove that for any constant $K>0$ there exists a separable group equipped with a complete bi-invariant metric bounded by $K$, isometric to the Urysohn sphere of diameter $K$, that is of `almost-universal disposition'. It is thus an object in the category of separable groups with bi-invariant metric analogous in its properties to the Gurarij space from the category of separable Banach spaces. We show that this group contains an isometric copy of any separable group equipped with bi-invariant metric bounded by $K$. As a consequence, we get that it is a universal Polish group admitting compatible bi-invariant metric, resp. universal second countable SIN group. Moreover, the almost-universal disposition shows that the automorphism group of this group is rich and it characterizes the group uniquely up to isometric isomorphism. We also show that this group is in a certain sense generic in the class of separable group with bi-invariant metric (bounded by $K$).

On the other hand, we prove there is no metrically universal separable group with bi-invariant metric when there is no restriction on diameter. The same is true for separable locally compact groups with bi-invariant metric.

Assuming the generalized continuum hypothesis, we prove that there exists a metrically universal (unbounded) group of density $\kappa$ with bi-invariant metric for any uncountable cardinal $\kappa$. We moreover deduce that under GCH there is a universal SIN group of weight $\kappa$ for any infinite cardinal $\kappa$.
\end{abstract}
\keywords{metrically universal group, free groups, Fra\" iss\' e theory, Urysohn space, Graev metric, SIN group}
\subjclass[2010]{22A05,54E50,03C98}
\maketitle
\section*{Introduction}
There has been an effort in trying to find various Polish groups universal for certain classes of Polish groups usually considering two different notions of universality: injective and surjective universality. If $\Gr$ is some class of Polish groups (let us say closed under isomorphism and taking closed subgroups) then $G\in \Gr$ is universal for $\Gr$ (or injectively universal) if every $H\in \Gr$ is topologically isomorphic with some closed subgroup of $G$. On the other hand, we say that $G\in \Gr$ is projectively universal if for every $H\in \Gr$ there is a continuous surjective homomorphism from $G$ onto $H$; or equivalently, there is a closed normal subgroup $N_H\leq G$ such that $H$ is topologically isomorphic to the quotient group $G/N_H$.

Various classes were considered in the past. Schreier and Ulam proved that there are both injectively and projectively universal compact Polish groups (see \cite{SB}, for example, comment starting on p. 184). On the other hand, there is no projectively and injectively universal locally compact Polish group (see again \cite{SB}). In \cite{Us1}, resp. \cite{Us2}, Uspenskij proved that the group of all homeomorphisms of the Hilbert cube ($\mathrm{Homeo}([0,1]^\omega)$) with the compact-open topology, resp. the group of all isometries of the Urysohn space with the point-wise convergence (or equivalently the compact-open) topology, are the (injectively) universal Polish groups (i.e. universal for the class of all Polish groups). Recently in \cite{BY}, Ben-Yaacov proved that the group of all linear isometries of the Gurarij space is a universal Polish group as well. Moreover, it is not hard to derive from the Uspenskij's result that the group of all linear isometries of the Holmes' space, the Lipschitz-free Banach 
space over the Urysohn space, is also a universal Polish group (we refer to Chapter 5 in \cite{Pe} for information on this Banach space).

In \cite{SPW}, Shakhmatov, Pelant and Watson constructed, based on a work of Graev in \cite{Gr}, a projectively universal abelian Polish group and projectively universal Polish group admitting bi-invariant metric. Shkarin in \cite{Sk} found a (injectively) universal abelian group (further investigated by Niemiec in \cite{Nie1}). Another natural class of Polish groups are those that admit a compatible complete left-invariant metric (recall that every Polish group admits a compatible complete metric and a compatible left-invariant metric, however these metrics might differ as is the case for example with $S_\infty$, the permutation group of integers, see \cite{Gao} for example). They are usually called CLI (complete left-invariant) groups. Malicki in \cite{Ma} proved that there is neither a universal CLI group nor a projectively universal CLI group. The last major contribution to this area were in \cite{Ding} the Ding's construction of a projectively universal Polish group (i.e. projectively universal for the 
whole class of Polish groups) and then further constructions of projectively universal Polish groups by Pestov and Uspenskij in \cite{PeUs}. They in fact construct projectively universal countable metrizable groups whose completions are then projectively universal Polish groups.

It was left open whether there is a universal Polish group in the class of Polish groups admitting bi-invariant metric. It is well known and not difficult to check that the condition admitting a compatible bi-invariant metric is equivalent with having a countable neighborhood base of the identity where each of these neighborhoods is conjugation-invariant. These groups are sometimes called (metrizable) SIN groups (small-invariant-neighborhoods). Thus, stated somewhat more generally than just for Polish groups, the consequence of our main result is that there is a universal second-countable SIN group.

For the reader's convenience we summarize the universality results for Polish groups in the following table.\\

\begin{tabular}[t]{|c|c|c|}
\hline
\bf{Class of groups} & \bf{Universal object} & \bf{Surj. univ. object}\\ \hline\hline
all Polish groups & yes \cite{Us1},\cite{Us2},\cite{BY} & yes \cite{Ding},\cite{PeUs}\\ \hline
abelian Polish groups & yes \cite{Sk} & yes \cite{SPW}\\ \hline
compact Polish groups & yes, \cite{SB} & yes,\cite{SB}\\ \hline
loc. compact P.groups & no, \cite{SB} & no, \cite{SB}\\ \hline
CLI groups & no \cite{Ma} & no \cite{Ma}\\ \hline
SIN P. groups & yes, this paper & yes, \cite{SPW}\\ \hline
\end{tabular}

\bigskip

Parallely, there has been a research on universal groups for subclasses of non-Archimedean Polish groups. We refer to the recent article \cite{GaXu} of Gao and Xuan, where they in particular prove that in case of non-Archimedean Polish groups there is no universal object admitting bi-invariant compatible metric. Then they summarize that in case of non-Archimedean Polish groups the last interesting class for which the solution is not known is the class of all locally compact non-Archimedean Polish groups.\\

Still much less explored area is the program of looking for (Polish or just separable) metric groups that are \emph{metrically} universal for certain class of (separable) metric groups. Metrically universal group from some class of separable metric groups contains isometric copies of all groups from that class. The only results from this area known to us (besides the well known results from functional analysis such as that $C(2^\Nat)$ is a metrically universal separable Banach space and metrically universal separable abelian $C^*$-algebra) are the following: the Shkarin's universal abelian Polish group is actually a metric group that is metrically universal for a certain restricted class of abelian groups with invariant metric - this class does not even contain the integers with the standard metric (nor, as a consequence, any separable Banach space), however, it contains all abelian groups with bounded invariant metric; we refer to Niemiec's paper \cite{Nie1} for precise formulations. Niemiec also found a 
metrically universal Boolean Polish group in \cite{Nie2} and in \cite{Nie1} for other exponents as well. Answering Shkarin's question, we in \cite{Do1} produced an abelian separable metric group that is metrically universal for all abelian separable groups with invariant metric.

Let us also mention a negative result of Ozawa (brought to our attention by Pestov) from \cite{Oz} that there is no universal separable $\mathrm{II}_1$ factor, which can be interpreted as a non-existence of a separable metrically universal group with bi-invariant \emph{spherical} metric.\\

Our last main comment concerns structures that are highly homogeneous, or, as it is often said with regard to the Gurarij space, that are of universal disposition. Consider for example the space $C(2^\Nat)$. Even though it is a metrically universal separable metric space and a metrically universal separable Banach (normed) space, it lacks certain homogeneity properties. Already in 1920's, P. Urysohn (\cite{Ur}) constructed a Polish metric space $\Ur$ that not only contains isometric copies of all separable metric spaces, it also has the property that for any finite metric spaces $A,B$ and isometric embeddings $\iota_A:A\hookrightarrow \Ur$ and $\rho:A\hookrightarrow B$ there exists an isometric embedding $\iota_B\supseteq \iota_A:B\hookrightarrow \Ur$ extending $\iota_A$ such that $\iota_A=\iota_B\circ\rho$.

Similarly, Gurarij in 1966 in \cite{Gu} constructed a separable Banach space $\mathbb{G}$ that not only contains linear isometric copies of all separable Banach spaces, it also has the property that for any finite dimensional Banach spaces $A,B$, $\varepsilon>0$ and linear isometric embeddings $\iota_A:A\hookrightarrow \mathbb{G}$ and $\rho:A\hookrightarrow B$ there exists a linear $\varepsilon$-isometric embedding $\iota_B\supseteq \iota_A:B\hookrightarrow \Ur$ extending $\iota_A$ such that $\iota_A=\iota_B\circ\rho$. In contrast with the Urysohn space, the $\varepsilon$ cannot be taken to be $0$.

The universal groups that we construct here also are not only universal, they have extension property of the same flavor. We postpone the precise formulation for later. Let us just mention here that, as in the case of the Gurarij space, this extension property works up to some $\varepsilon>0$.\\

The main results of this paper are the following theorems.
\begin{thm}\label{main}
For any positive real constant $K>0$, there exists a Polish metric group $\mathbb{G}_K$ with bi-invariant metric bounded by $K$ which is of `almost-universal disposition' similarly as the Gurarij space. Moreover, $\mathbb{G}_K$
\begin{itemize}
\item is metrically universal for the class of separable groups equipped with bi-invariant metric bounded by $K$,
\item is isometric to the Urysohn sphere of diameter $K$,
\item is generic among separable groups with bi-invariant metric bounded by $K$.

\end{itemize}

\end{thm}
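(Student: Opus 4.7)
The plan is to build $\mathbb{G}_K$ as a Fra\" iss\' e-type limit in a suitable class $\Age$ of finitely generated groups equipped with rational-valued bi-invariant metric taking values in $[0,K]\cap\Rat$. The first task is to isolate the right notion of morphism (isometric embedding of groups) and to verify a joint embedding property together with an \emph{approximate} amalgamation property: given $A,B,C\in\Age$ with isometric embeddings $A\hookrightarrow B$ and $A\hookrightarrow C$ and $\varepsilon>0$, one should produce $D\in\Age$ and isometric embeddings $B\hookrightarrow D$, $C\hookrightarrow D$ that agree on $A$ up to an error of $\varepsilon$ in the induced bi-invariant metric. In the category of bi-invariant metric groups, exact amalgamation generally fails, which is exactly why the resulting disposition property is only approximate, mirroring the Gurarij situation; the technical input here is the Graev-type amalgamation recalled in the introduction, which must be refined to keep the amalgamated metric bi-invariant and bounded by $K$.

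With the approximate amalgamation in hand, I would construct $\mathbb{G}_K$ as the metric completion of an increasing union $\bigcup_n G_n$ with $G_n\in\Age$, using a standard book-keeping that enumerates all triples $(A,B,\iota_A)$ with $A\leq G_n$, $A\hookrightarrow B$ in $\Age$, and ensures that each such extension is realized inside some $G_m$ up to error $2^{-n}$. The completion is a Polish group with complete bi-invariant metric bounded by $K$ and satisfies the almost-universal disposition property by a standard back-and-forth/compactness argument. Metric universality then follows by writing any separable bi-invariant metric group $H$ of diameter $\leq K$ as an increasing union of finitely generated subgroups, approximating each by members of $\Age$, and inductively lifting the embeddings into $\mathbb{G}_K$ with geometrically summable errors to produce an honest isometric embedding $H\hookrightarrow\mathbb{G}_K$. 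The isometry with the Urysohn sphere $\Ur$ of diameter $K$ is obtained by verifying that the underlying metric space of $\mathbb{G}_K$ satisfies the one-point extension property for Kat\v etov functions with values in $[0,K]$: realize such a function as the distance profile of a fresh generator added to the finitely generated subgroup spanned by the finite set, with an appropriately chosen bi-invariant metric, and invoke the disposition property; uniqueness of the Urysohn sphere then identifies the two spaces.

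For genericity, I would introduce the Polish space $\mathcal{M}_K$ of bi-invariant pseudometrics on the countably generated free group $F_\infty$ with values in $[0,K]$, topologised by pointwise convergence on finite tuples, and pass to the quotients by the subgroups of zero distance. The almost-universal disposition property can be written as the intersection, over $A\hookrightarrow B$ in the countable class $\Age$ and over rational $\varepsilon>0$, of open dense subsets of $\mathcal{M}_K$ (density uses the approximate amalgamation, openness is a routine continuity check). Hence the isomorphism class of $\mathbb{G}_K$ is comeager. The main obstacle throughout is the first step: verifying approximate amalgamation in $\Age$ while controlling both bi-invariance and the diameter bound $K$ simultaneously. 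This is where the Graev construction, adapted to the bounded bi-invariant setting, has to do its work, and is also the structural reason the disposition is only $\varepsilon$-approximate rather than exact as in the Urysohn case.
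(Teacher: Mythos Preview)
Your broad outline (Fra\" iss\' e-type construction, completion, Kat\v etov one-point extension for the Urysohn sphere, Baire category for genericity) matches the paper, but there is a genuine conceptual gap at the core.

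You assert that exact amalgamation fails in the relevant class and that this is the source of the ``almost'' in almost-universal disposition. In the paper this is \emph{not} the case. The Fra\" iss\' e class $\Gr_K$ consists of finitely generated \emph{free} groups equipped with \emph{finitely generated} rational bi-invariant metrics bounded by $K$, and the morphisms are restricted to isometric monomorphisms sending free generators to free generators. In this class amalgamation is \emph{exact}: one forms the free product $G_0\ast H_1\ast H_2$, takes the metric amalgam of the finite generating sets, and then the finitely generated metric it induces; that this really extends the two given metrics is deduced from Slutsky's theorem on Graev metrics on free products. The Fra\" iss\' e limit $G_K$ is therefore a genuine classical Fra\" iss\' e limit, with an exact finite-extension property for objects of $\Gr_K$. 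The $\varepsilon$ in the disposition property enters only \emph{afterwards}, because an arbitrary bi-invariant metric on $F_n$ is not finitely generated or rational: one needs the $N$-approximation and rational-approximation lemmas to replace it by something in $\Gr_K$, and then a ``gluing'' proposition to realise two nearby metrics on $F_n$ inside a single free product with the generators $\varepsilon$-close.

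Two further points your sketch does not address and which the paper needs nontrivial work for: (i) working with arbitrary finitely generated groups (rather than free groups with finitely generated metrics) makes both amalgamation and rational approximation of the metric problematic---the restriction to free groups and finitely generated metrics is not cosmetic; (ii) your universality step ``write $H$ as an increasing union of finitely generated subgroups and approximate'' skips a key reduction: one first enlarges $H$ to $H\ast F_\infty$ with a bi-invariant metric in which $F_\infty$ is dense (this uses a Graev-type extension theorem), and it is this free group, not arbitrary finitely generated subgroups of $H$, that one embeds into $\mathbb{G}_K$ via the approximation machinery.
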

The precise formulations of `almost-universal disposition' and of being `generic' are somewhat technical, thus we postpone it till Section 1.5. Before that, we shall focus on constructing the group and showing its universality.

Since metric universality is much stronger than topological universality, we thus get the following theorem as a corollary.
\begin{thm}\label{main2}
There exists a universal Polish group admitting compatible bi-invariant metric; resp. Polish group with bi-invariant metric universal for the class of second-countable SIN groups.
\end{thm}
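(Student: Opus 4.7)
The plan is to derive Theorem \ref{main2} directly from Theorem \ref{main} by reducing topological universality to metrical universality. The key observation is that metric embeddings are automatically topological embeddings, and completeness will ensure that the embeddings land as closed subgroups in the Polish setting, which is what universality requires.

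First I would recall the Birkhoff--Kakutani-type characterization: a Polish group $H$ admits a compatible bi-invariant metric if and only if it is SIN. Given such an $H$ with compatible bi-invariant metric $d$, the plan is to replace $d$ with a bounded compatible bi-invariant metric. The natural candidates are $d' := \min(d,K)$ or $Kd/(1+d)$; both are easily checked to be bi-invariant, to generate the same topology as $d$ (since they agree with $d$ on a neighborhood of the identity, which by translation invariance determines the topology), and to satisfy the triangle inequality. So $(H, d')$ is a separable group with bi-invariant metric bounded by $K$, and Theorem \ref{main} provides an isometric embedding $\iota : (H, d') \hookrightarrow \mathbb{G}_K$.

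Next I would promote this isometric embedding to a closed topological embedding. Since $d'$ is a bi-invariant metric on a Polish group, it is automatically complete: Cauchy sequences for a bi-invariant metric are two-sided Cauchy, and the two-sided uniformity on a Polish group is complete. Thus $(H,d')$ is a complete metric space, and its isometric image $\iota(H)\subseteq \mathbb{G}_K$ is complete, hence closed in $\mathbb{G}_K$. The map $\iota$ is a topological isomorphism onto its image, so $H$ embeds as a closed topological subgroup of $\mathbb{G}_K$. Choosing any fixed $K>0$ (say $K=1$), the group $\mathbb{G}_K$ is therefore a universal Polish group for the class of Polish groups admitting a compatible bi-invariant metric.

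For the second statement, let $H$ be a second countable SIN group, equipped with a compatible bi-invariant metric $d$, which we may again assume bounded by $K$ by the same truncation. The completion $\widehat{H}$ is a Polish SIN group and $(H,d')$ embeds isometrically and densely into it; the first part supplies an isometric embedding $\widehat{H}\hookrightarrow \mathbb{G}_K$, which restricts to a topological embedding $H\hookrightarrow \mathbb{G}_K$. Thus $\mathbb{G}_K$ (with its bi-invariant metric) is also universal for the class of second countable SIN groups. The only point requiring care is the verification that truncation preserves bi-invariance and compatibility, and that bi-invariant metrics on Polish groups are automatically complete; neither is a serious obstacle, but the latter is what makes the reduction from metric to topological universality work cleanly.
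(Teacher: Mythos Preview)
Your argument is correct and follows the same route as the paper: fix $K$, truncate any compatible bi-invariant metric to a bounded one (the paper uses $d/(1+d)$, you also mention $\min(d,K)$), and invoke the metrical universality of $\mathbb{G}_K$ from Theorem~\ref{main} to get an isometric, hence topological, embedding.

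The one place you go beyond the paper is in verifying that the image is \emph{closed}, which the paper's definition of (injective) universality actually requires but which the paper's short proof does not address. Your justification---that a compatible bi-invariant metric on a Polish group is automatically complete because bi-invariant Cauchy sequences are two-sided Cauchy and Polish groups are Raikov complete---is correct and fills this gap cleanly. So your proof is essentially the paper's, but with the closedness step made explicit.
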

\begin{proof}
Let $\mathbb{G}$ be the metrically universal $\mathbb{G}_1$ from Theorem \ref{main} (any other constant would do as well of course). Then we claim it is the desired universal group. Indeed, let $H$ be any Polish group admitting a compatible bi-invariant metric $d$. We may suppose that $d$ is bounded by $1$. If it were not then we would consider the metric $d'$ defined for any $a,b\in H$ as $$d'(a,b)=\frac{d(a,b)}{1+d(a,b)}$$ which is still compatible and bounded by $1$. Thus $H$ isometrically embeds into $\mathbb{G}$ and an isometric embedding is, of course, in particular a topological embedding.
\end{proof}
One of the auxiliary results that we need in order to prove the main results, which is interesting in its own right and to the best of our knowledge was not known before, is the following theorem.
\begin{thm}
Let $(G,d)$ be a metric group of density $\kappa\geq \aleph_0$ equipped with a bi-invariant metric $d$. Then there exists a supergroup $(G,d)\leq(H,\bar{d})$ with $\bar{d}$ being bi-invariant such that $\bar{d}$ extends $d$ and $H$ has the free group of $\kappa$-many generators as a dense subgroup.
\end{thm}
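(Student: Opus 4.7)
The plan is to construct $H$ as a quotient of the free product $P := G \ast F$, where $F$ is the abstract free group on the generators $\{y_{\alpha,n} : \alpha<\kappa,\ n\in\Nat\}$ (a set of cardinality $\kappa$ since $\kappa\geq\aleph_0$). Fix a dense subset $\{g_\alpha : \alpha<\kappa\}$ of $G$ with $g_0=e$. On $P$ consider the maximal conjugation-invariant length function $\bar\ell$, equivalently the maximal bi-invariant pseudo-metric, that satisfies (i) $\bar\ell(g)\leq d(g,e)$ for $g\in G$ and (ii) $\bar\ell(y_{\alpha,n}g_\alpha^{-1})\leq 2^{-n}$ for all $\alpha,n$. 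Explicitly,
\[
\bar\ell(w)=\inf\Bigl\{\sum_{j=1}^k r_j : w=\prod_{j=1}^k c_j u_j^{\pm1}c_j^{-1},\ c_j\in P\Bigr\},
\]
the infimum over all representations as products of conjugates, where each $(u_j,r_j)$ is a constraint pair from (i) or (ii). Let $N=\{w\in P : \bar\ell(w)=0\}$, a normal subgroup, and let $H$ be the metric completion of $P/N$ with the induced bi-invariant metric $\bar d(uN,vN):=\bar\ell(uv^{-1})$.

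Two nontrivial verifications remain: (A) $\bar\ell|_G=d(\cdot,e)$, so that $G\hookrightarrow H$ isometrically; (B) the map $F\hookrightarrow P\to P/N$ is injective, so that the classes $\bar y_{\alpha,n}$ freely generate a free subgroup of rank $\kappa$ in $H$. Both are instances of the same maximality principle: any group homomorphism $\pi\colon P\to T$ into a bi-invariantly metrized group $(T,d_T)$ pulls the metric back to a conjugation-invariant length function $p\mapsto d_T(\pi(p),1)$ on $P$; if that pullback satisfies constraints (i) and (ii), then maximality of $\bar\ell$ yields $\bar\ell(p)\geq d_T(\pi(p),1)$.

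For (A), take $T=G$ with $\pi|_G=\mathrm{id}_G$ and $\pi(y_{\alpha,n}):=g_\alpha$, well defined on $P$ by the universal property of the free product. Since $d(\pi(g),e)=d(g,e)$ and $d(\pi(y_{\alpha,n}g_\alpha^{-1}),e)=d(e,e)=0$, both constraint bounds are respected, so $\bar\ell(g)\geq d(g,e)$ for $g\in G$, which together with constraint (i) gives equality. For (B), take $T=F$ with $\pi'|_G\equiv 1$ and $\pi'(y_{\alpha,n})=y_{\alpha,n}$, and equip $F$ with the Graev bi-invariant metric $d'$ associated to the pointed metric space $(\{y_{\alpha,n}\}\cup\{\ast\},\ast)$ in which $d'(y_{\alpha,n},\ast):=2^{-n}$; Graev's theorem provides such a $d'$ that genuinely separates points on $F$. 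The pullback again respects both constraints, so $\bar\ell(w)\geq d'(\pi'(w),1)=d'(w,1)>0$ for every $w\in F\setminus\{1\}$.

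Density of the free subgroup is automatic from constraint (ii): the bound $\bar d(\bar y_{\alpha,n},g_\alpha)\leq 2^{-n}\to 0$ forces the closed subgroup of $H$ generated by the $\bar y_{\alpha,n}$ to contain every $g_\alpha$, hence all of $G$, and thus all of $P/N$, which is dense in $H$. The main technical obstacle is the nondegeneracy of $\bar\ell$ simultaneously on $G$ and on $F$; it is handled uniformly by the pair of pullback homomorphisms above, and the only external input is the classical Graev construction of a bi-invariant metric on a free group extending a prescribed metric on the generators.
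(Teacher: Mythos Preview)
Your proof is correct and takes a genuinely different route from the paper's. The paper proceeds inductively: it enumerates a dense subset of $G$ and repeatedly applies the external extension result Theorem~\ref{Graevext} (proved in \cite{Do2}) to adjoin one new free generator at a time, each placed at a prescribed small distance from a chosen dense point, building up the metric on $G\ast F_\kappa$ step by step. Each step relies on the nontrivial fact that a bi-invariant metric on $G$ together with a compatible metric on $G\amalg X$ extends jointly to a bi-invariant metric on $G\ast F(X)$.

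Your construction is one-shot: you form $P=G\ast F$ all at once, impose the obvious constraints, and take the maximal bi-invariant pseudometric satisfying them. The two retractions $\pi\colon P\to G$ and $\pi'\colon P\to F$ then furnish lower bounds for $\bar\ell$ by the maximality principle, certifying nondegeneracy on both factors simultaneously. This is more self-contained---the only external ingredient is the classical Graev metric on a free group over a pointed set, rather than the relative extension theorem from \cite{Do2}---and it handles all infinite $\kappa$ uniformly with no transfinite bookkeeping. Two cosmetic remarks: you should specify the full metric on the pointed set $\{y_{\alpha,n}\}\cup\{\ast\}$ (for instance $d'(y_{\alpha,n},y_{\beta,m})=2^{-n}+2^{-m}$ for distinct generators), and the passage to the completion is unnecessary since $F$ is already dense in $P/N$ and the statement does not require $H$ to be complete; neither point affects correctness.
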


On the other hand and in contrast with our result on a metrically universal abelian group, we have the following theorem.
\begin{thm}
There is no separable group with bi-invariant metric that contains isometric copies of any separable group with bi-invariant metric.

Moreover, there is no metrically universal locally compact separable group with bi-invariant metric.
\end{thm}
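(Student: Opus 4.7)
The plan is by contradiction. Suppose there were a separable bi-invariant metric group $G$ containing isometric copies of every such group. Since $(\Rea,|\cdot|)$ must embed, $G$ would have unbounded metric; fix a countable dense subgroup $D = \{g_n\}_n \leq G$. The strategy, inspired by analogous non-existence results mentioned in the introduction (e.g.\ Malicki for CLI groups, Ozawa for $\mathrm{II}_1$ factors), is to exploit the unboundedness to construct, for parameters in an uncountable index set, a family $\{H_\alpha\}$ of separable bi-invariant metric groups with incompatible large-scale signatures that cannot all be realized inside $G$.

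Concretely, for each function $f\colon \Nat \to \Rea_{>0}$ in an uncountable collection $\mathcal{F}$ of strictly increasing divergent sequences, I would equip the free group $F_\omega$ on generators $\{a_n\}_n$ with the Graev bi-invariant metric determined by $\ell(a_n) = f(n)$, and let $H_f$ be its metric completion (separable by construction, and well-defined as a bi-invariant metric group by the extension theorem earlier in the paper). Any isometric embedding $H_f \hookrightarrow G$ picks out a \emph{Graev-free} sequence $(b_n)_n \in G^{\Nat}$ with $\ell(b_n) = f(n)$ and $\ell(w(b_1,b_2,\ldots)) = \ell_f(w)$ for every word $w$ in the free generators — an extremely rigid condition asserting that the $b_n$ are maximally non-commutative given their prescribed lengths. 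The core technical step is then to show, using separability of $G$ and approximation of arbitrary sequences by ones drawn from $D$, that the set of length profiles realizable by Graev-free sequences in $G$ is genuinely thin — for instance, meagre in the Polish pointwise topology on $\Rea^{\Nat}$, or contained in an analytic set of complexity strictly below that of $\mathcal{F}$ — so that a generic $f \in \mathcal{F}$ yields an $H_f$ with no isometric embedding into $G$.

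For the locally compact variant, the structural rigidity of locally compact SIN groups — by classical structure theory, such a group is, modulo a compact normal subgroup, a direct product of a compact group with $\Rea^n$ — affords a more direct obstruction. One exhibits a family of separable locally compact bi-invariant metric groups whose structural invariants (the dimension of the Euclidean factor, the rank and dimension of the compact factor, or finer Lie-theoretic data) are pairwise incompatible, and observes that these cannot all be realized inside any fixed locally compact such $G$. The main obstacle, which falls on the first part, is making the rigidity argument precise: both $\mathcal{F}$ and the space of candidate sequences in $G$ have cardinality $\mathfrak{c}$, so a naive counting fails. One must exploit the algebraic rigidity of the Graev construction — in particular that Graev-freeness is preserved under limits in each coordinate and is a strong closed condition on the whole sequence — to cut out the realizable length profiles as a genuinely small subset of $\Rea^{\Nat}$, and then diagonalize against that subset using the unboundedness to shift $f$ out to arbitrarily large scales.
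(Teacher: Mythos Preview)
Your proposal is not a proof but a strategy sketch, and you yourself flag the gap: ``making the rigidity argument precise.'' As written, nothing forces the set of Graev-realizable length profiles in $G$ to be small. Once you fix the generator lengths $f(n)$, the Graev metric on $F_\omega$ is completely determined, so your family $\{H_f\}$ is parametrized by a single real sequence; but in any separable unbounded $G$ one can find, for each $n$, elements of norm arbitrarily close to $f(n)$, and there is no mechanism in your outline that prevents these from being assembled into a Graev-free sequence. The appeal to meagreness or descriptive complexity is suggestive but not an argument: both the target space and the parameter space have size $\mathfrak{c}$, and you have not identified any definable invariant of $G$ that bounds the realizable profiles.

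The paper's proof is quite different and much more concrete. It works with $F_3$ rather than $F_\omega$, and rather than varying generator lengths it fixes them and instead varies the value of $d(a^{2^k}b^{2^k}c^{2^k},1)$ according to a binary sequence $x\in 2^{\Nat}$, choosing either $2\cdot 2^k$ or $(5/2)\cdot 2^k$ at stage $k$. The genuine work (Proposition~\ref{keylemma}) is checking that these choices are independent, i.e.\ that each such assignment really extends to a bi-invariant metric; this uses the Ding--Gao formula for Graev metrics and a careful combinatorial match argument. Once one has continuum-many metrics on $F_3$ pairwise at $\dist\geq 1/6$, a short packing lemma (Claim~\ref{claim1}) shows that any two embeddings into a common $(G,d_G)$ must send the generator triple $(a,b,c)$ to triples at mutual $d_G$-distance $\geq 1/6$, contradicting separability of $G$.

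For the locally compact clause, your detour through structure theory is unnecessary and not obviously correct as stated. The paper's metrics $d_x$ on $F_3$ are discrete, so each $(F_3,d_x)$ is already a countable discrete (hence locally compact) group; the very same packing argument disposes of both clauses at once.
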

\section{Universal groups}
Before we start, let us roughly describe the ideas behind the construction. We construct a certain ultrahomogeneous metric free group (ultrahomogeneous in the same sense as the Urysohn space, the random graph, the Hall's universal locally finite group, etc., are ultrahomogeneous). This ultrahomogeneous group contains an isometric copy of every finitely generated free group equipped with a bounded rational bi-invariant metric that is also finitely generated. We then define a metric on the set of all bi-invariant metrics on free groups and show that for any bounded bi-invariant metric on some finitely generated free group there is a Cauchy sequence of finitely generated bounded rational metrics that converges to it. This will allow us to prove that the metric completion of the ultrahomogeneous metric group contains an isometric copy of every free group with countably many generators equipped with any bi-invariant bounded metric. The proof will be complete when we show that every group with bi-invariant metric 
can be isometrically embedded into a bigger group with bi-invariant metric which has a free group as a countable dense subgroup.
\subsection{Definitions and preliminaries}
\begin{defin}\label{fingenmetric}
Let $(G,d)$ be a metric group. We say that the metric $d$ is \emph{finitely generated} if there exists a finite set $A_G\subseteq G$ (called the generating set for $d$) such that $1\in A_G$, $A_G=A_G^{-1}$ and for every $a,b\in G$ we have $d(a,b)=\min\{d(a_1,b_1)+\ldots+d(a_n,b_n):n\in \Nat,\forall i\leq n (a_i,b_i\in A_G\wedge a=a_1\cdot\ldots\cdot a_n,b=b_1\cdot\ldots\cdot b_n)\}$. In particular, $G$ is (algebraically) generated by $A_G$.
\end{defin}
Any finitely generated metric is automatically bi-invariant which is not hard to check. Alternatively, the reader may consult \cite{Do1} where this fact was proved. Also, the following fact is clear and stated without a proof.
\begin{fact}\label{trivfact}
Let $(G,d)$ be a metric group with a finitely generated metric. Suppose that for some $a,b\in G$ we have $d(a,b)=d(a_1,b_1)+\ldots+d(a_n,b_n)$ for some $a_1,b_1,\ldots,a_n,b_n\in G$ such that $a=a_1\cdot\ldots\cdot a_n$ and $b=b_1\cdot\ldots\cdot b_n$. Then for any $1\leq i\leq j\leq n$ we have that $$d(a_i\cdot\ldots\cdot a_j,b_i\cdot\ldots\cdot b_j)=d(a_i,b_i)+\ldots+d(a_j,b_j).$$
\end{fact}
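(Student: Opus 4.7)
The plan is a pinching argument: combine one application of the triangle inequality across three blocks with a telescoping triangle inequality within each block, and then observe that the equality hypothesis forces equality everywhere.

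Fix $1 \leq i \leq j \leq n$ and split the given factorizations into three blocks
\[
P = a_1\cdots a_{i-1},\quad Q = a_i\cdots a_j,\quad R = a_{j+1}\cdots a_n,
\]
\[
P' = b_1\cdots b_{i-1},\quad Q' = b_i\cdots b_j,\quad R' = b_{j+1}\cdots b_n,
\]
with the convention that $P=P'=1$ if $i=1$ and $R=R'=1$ if $j=n$. Then $a=PQR$ and $b=P'Q'R'$. First I would swap one block at a time and use bi-invariance of $d$ to get
\[
d(a,b) \leq d(PQR,P'QR) + d(P'QR,P'Q'R) + d(P'Q'R,P'Q'R') = d(P,P') + d(Q,Q') + d(R,R').
\]
Next, within each block I would apply the same one-letter-at-a-time triangle estimate (again using bi-invariance) to obtain
\[
d(P,P') \leq \sum_{k=1}^{i-1} d(a_k,b_k),\quad d(Q,Q') \leq \sum_{k=i}^{j} d(a_k,b_k),\quad d(R,R') \leq \sum_{k=j+1}^{n} d(a_k,b_k).
\]

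Adding these three block-wise bounds yields
\[
d(P,P') + d(Q,Q') + d(R,R') \leq \sum_{k=1}^{n} d(a_k,b_k) = d(a,b),
\]
where the last equality is the hypothesis. Chaining this with the first display gives a sandwich in which both extremes equal $d(a,b)$, so every inequality above is an equality. In particular the middle block bound is tight, i.e.
\[
d(a_i\cdots a_j,\; b_i\cdots b_j) = d(Q,Q') = \sum_{k=i}^{j} d(a_k,b_k),
\]
which is the desired conclusion.

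There is really no substantive obstacle here; the only point needing care is that the $a_k,b_k$ need \emph{not} lie in the finite generating set $A_G$, so one cannot invoke the ``min'' defining the finitely generated metric directly. One only uses bi-invariance (which is automatic once $d$ is finitely generated, as noted just after Definition~\ref{fingenmetric}) and the standard triangle inequality, plus the pinching observation. Handling the edge cases $i=1$ or $j=n$ just amounts to reading the empty products as the identity.
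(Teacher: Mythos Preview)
Your argument is correct. The paper states this fact without proof, remarking only that it is clear and that bi-invariance alone (rather than the full finitely-generated hypothesis) suffices; your pinching argument via bi-invariance and the triangle inequality is exactly the natural verification of this remark.
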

Let us remark that the requirement on $d$ being finitely generated in the previous fact is not needed in fact, being bi-invariant is sufficient.

We shall also later use the following fact whose proof we sketched in \cite{Do2}. Before, we recall the notion of a Graev metric on free groups. Let $(A,1,d)$ be a pointed metric space and let $F_A$ be the free group with $A\setminus\{1\}$ as the set of free generators and $1$ as the unit. Denote by $A'$ the set $A\setminus \{1\}$ and set $B=\{1\}\cup A'\prod (A')^{-1}$, where $(A')^{-1}=\{a^{-1}:a\in A'\}$ is the set of formal inverses of $A'$. Extend $d$ to $B$ by setting $d(a^{-1},b^{-1})=d(a,b)$, $d(a^{-1},1)=d(a,1)$, and $d(a^{-1},b)=d(a,1)+d(b,1)$, for any $a,b\in A'$. We have $B\subseteq F_A$ and let $\delta$ be a finitely generated metric on $F_A$ generated by $d$ on $B$. We call $\delta$ the \emph{Graev} metric. It follows that finitely generated metric are generalization of the Graev metric.
\begin{lem}\label{factorlem}
Let $(G,d)$ be a metric group, where $d$ is bi-invariant. Then $d$ is finitely generated iff $(G,d)$ is a factor group with the factor metric of a free group over a pointed finite metric space with the Graev metric quotiented by a finitely generated normal subgroup.
\end{lem}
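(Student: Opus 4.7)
The plan is to prove the biconditional by matching the finite generating set of the metric on $G$ with free generators of a free group, and interpreting the finitely generated metric condition in terms of the Graev metric.

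For the ($\Leftarrow$) direction, suppose $(G,d) = (F_A, \delta)/N$ with $N$ finitely generated normal and $\delta$ the Graev metric from a finite pointed metric space $(A,1,d_A)$. Take $A_G := \pi(B) \subseteq G$, where $\pi: F_A \to G$ is the projection and $B = \{1\} \cup A' \cup (A')^{-1}$; this is a finite symmetric subset of $G$ containing $1_G$. For any $a,b \in G$ with lifts $x,y \in F_A$, the factor metric reads $d(a,b) = \inf_{n \in N} \delta(xn, y)$. Because $A$ is finite, $\delta(w,1) \geq \epsilon$ for every $w \neq 1$ in $F_A$ where $\epsilon = \min\{d_A(c,1): c \in B \setminus \{1\}\} > 0$, so $F_A$ is discrete and $N$ is closed; the infimum is attained by some $n^* \in N$. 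A minimizing $B$-decomposition of $\delta(xn^*, y)$ pushes forward via $\pi$ to an $A_G$-decomposition of $(a,b)$ of the same total length, realizing $d(a,b)$. The reverse inequality $\sum d(\alpha_i, \beta_i) \geq d(a,b)$ for any $A_G$-decomposition is immediate from bi-invariance.

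For the ($\Rightarrow$) direction, given $(G,d)$ with $d$ finitely generated by $A_G$, I take $(A,1,d_A) := (A_G, 1_G, d|_{A_G})$, form $F_A$ with its Graev metric $\delta$, and let $\pi: F_A \to G$ be the natural homomorphism sending each free generator $\tilde a$ to $a$. Writing $\bar\delta$ for the factor metric on $F_A/\ker\pi$, one checks $\bar\delta = d$: the inequality $\bar\delta \leq d$ comes from lifting an optimal $A_G$-decomposition of $(a,b)$ to $F_A$, while $d \leq \bar\delta$ follows from $\pi: (F_A, \delta) \to (G, d)$ being $1$-Lipschitz, a statement that reduces to a short check on the finite set $B$ using bi-invariance and the triangle inequality in $G$ (for instance $d(a,b^{-1}) \leq d(a,1) + d(b,1) = \delta(\tilde a, \tilde b^{-1})$).

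The main obstacle is showing $N := \ker \pi$ is finitely generated as a normal subgroup of $F_A$. I would propose the natural finite candidate set of relations $R \subset N$ consisting of $\tilde a \cdot \widetilde{a^{-1}}$ for $a \in A_G \setminus \{1\}$ (identifying formal with algebraic inverses), together with $\tilde a \tilde b \widetilde{ab}^{-1}$ whenever $a,b,ab \in A_G$. The inclusion $N_0 := \langle\langle R \rangle\rangle \subseteq N$ is immediate; the hard part is the reverse inclusion. I would try to prove it by induction on the minimal length of a $B$-decomposition of $w \in N$, using the $R$-relations to progressively shorten $w$ modulo $N_0$ and invoking the finitely generated property of $d$ (applied to pairs $(w', 1)$ arising during the reduction) to justify each shortening step. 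This is the delicate combinatorial step that I expect to require the most care; if the above $R$ proves insufficient, one would enlarge the pointed metric space $A$ to a slightly larger finite subset of $G$ so that its local multiplicative structure becomes rich enough for finitely many relations to normally generate the full kernel.
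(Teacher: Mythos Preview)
Your setup for the ($\Rightarrow$) direction is exactly the paper's: take the generating set $A_G$ as a pointed metric space, form $F_A$ with the Graev metric $\delta$, and consider the canonical surjection $p:F_A\to G$. The paper, however, proves far less than you do. It explicitly states that it will only show the direction ``used later'' and refers to \cite{Do2} for the rest; the actual argument given in the paper consists solely of observing that $p$ is a surjective $1$-Lipschitz homomorphism. It does not verify that the factor metric coincides with $d$ (your two-inequality argument goes beyond the paper here), and it does not touch the claim that $\ker p$ is a finitely generated normal subgroup at all.

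Your instinct that the ``$N$ finitely generated'' clause is the real obstacle is correct, and your hesitation is warranted. The candidate relation set $R$ you propose amounts to a presentation of $G$ derived from the multiplication table on $A_G$; for this to normally generate the full kernel one essentially needs $G$ to be finitely presented, which the hypothesis ``$d$ is finitely generated'' does not obviously imply (the word metric on any finitely generated group satisfies Definition~\ref{fingenmetric}). The paper sidesteps this entirely: in its sole application (the proof of Proposition~\ref{keylemma}) the group in question is $F_3$, which is trivially finitely presented, and moreover only the identification of $d$ with the factor metric of $\delta$ is actually used there --- the finite generation of $N$ plays no role. So you have correctly located a delicate point that the paper neither proves nor needs; your energy is better spent on the factor-metric identification, where your argument is already more complete than the paper's.
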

\begin{proof}
We shall show only the direction that will be used later. For the other direction, we refer to \cite{Do2}.

Suppose that $A\subseteq G$ is the generating set for $d$. View $(A,1_G,d)$ as a pointed metric space and consider the free group $F_A$ with the Graev metric $\delta$ extending $d$ on $A$. Consider the group homomorphism $p:F_A\rightarrow G$ determined by the identity mapping on $A$. Since $A$ algebraically generates $G$, it is a surjection. It follows from the definition of the Graev metric that $p$ is also $1$-Lipschitz.
\end{proof}
We shall construct the universal group using Fra\" iss\' e theoretic methods. It is somewhat similar to the construction of the Urysohn universal metric space - the unique complete separable metric space that contains isometric copies of every finite metric space and every partial finite isometry between its two subsets extends to an autoisometry of the whole space. Even though the original construction of Urysohn (\cite{Ur}) precedes the birth of Fra\" iss\' e theory (\cite{Fr}) it could be described as Fra\" iss\' e-like (we also refer the reader to another construction of the Urysohn space done by Kat\v etov in \cite{Kat}) - at first, one constructs a universal and ultrahomogeneous countable rational metric space, which is the Fra\" iss\' e limit of the class of all finite rational metric spaces, and then takes the completion. Here as well, we at first construct a free group with countably many generators with certain rational bi-invariant metric, which is a Fra\" iss\' e limit of certain class of groups, 
and then we take the completion.

Before beginning the construction, we should therefore make a quick introduction to Fra\" iss\' e theory. For a reader uninitiated to Fra\" iss\' e theory, we recommend either Chapter 7 from \cite{Ho} or lecture notes of David Evans from \cite{Ev}. For a general category-theoretic look on Fra\" iss\' e theory, we refer the reader to \cite{DrGo} and \cite{Kub}. Our use of Fra\" iss\' e theory will be however fairly classical. It will differ from the standard constructions in e.g. Urysohn space, random graph, Hall's universal locally finite group, etc., only in that regard that in our category of structures we restrict to a certain proper subclass of all embeddings between them.

Let $L$ be a countable signature and let $\mathcal{K}$ be some {\it countable} class of finitely generated $L$-structures, resp. isomorphism types of finitely generated $L$-structures. Moreover, let $\mathcal{F}$ be a class of morphisms (embeddings) between structures from $\mathcal{F}$ (which we suppose contains all the isomorphisms and identities). 
\begin{itemize}
\item We say that ($\mathcal{K}$,$\mathcal{F}$) satisfies the {\it joint-embedding property} if for every $A,B\in \mathcal{K}$ there is $C\in \mathcal{K}$ and embeddings $\iota_A:A\hookrightarrow C,\iota_B:\hookrightarrow C$ from $\mathcal{F}$.
\item We say that ($\mathcal{K}$,$\mathcal{F}$) has the {\it amalgamation property} if whenever we have $A,B,C\in \mathcal{K}$ such that $A$ is embeddable into $B$ via some embedding $\phi_B\in \mathcal{F}$ and embeddable into $C$ via some embedding $\phi_C\in \mathcal{F}$, then there exists $D\in \mathcal{K}$ and embeddings of $B$ via $\psi_B\in \mathcal{F}$ into $D$ and of $C$ via $\psi_C\in \mathcal{F}$ into $D$ such that $\psi_C\circ \phi_C=\psi_B\circ \phi_B$.
\item If ($\mathcal{K}$,$\mathcal{F}$) satisfies these properties above then we call it a {\it Fra\" iss\' e class}.

\end{itemize}
 
The following theorem is the most important tool in Fra\" iss\' e theory.
\begin{thm}[Fra\" iss\' e theorem, see \cite{Ho} or \cite{Ev} ]\label{fraissethm}
If ($\mathcal{K}$,$\mathcal{F}$) is a Fra\" iss\' e class then there exists unique up to isomorphism countable structure $\mathbb{K}$, called the Fra\" iss\' e limit of ($\mathcal{K}$,$\mathcal{F}$), with the following properties:
\begin{enumerate}
\item $\mathbb{K}$ is a direct limit of some sequence $K_1\to K_2\to\ldots$, where $K_i$'s are from $\mathcal{K}$ and the `arrows' in the limit are from $\mathcal{F}$.
\item If $\mathbb{A}$ is a direct limit of some system $A_1\to A_2\to\ldots$, where $A_i$'s are from $\mathcal{K}$ and the `arrows' in the limit are from $\mathcal{F}$, then $\mathbb{A}$ embeds into $\mathbb{K}$.
\item If $A_1, A_2\in \mathcal{K}$ are isomorphic and there are embedding $\iota_1:A\rightarrow K_n$, $\iota_2:A_2\rightarrow K_n$, for some $n$ and where $\iota_1,\iota_2\in \mathcal{F}$, then every partial isomorphism between $\iota_1[A_1]$ and $\iota_2[A_2]$ extends to a full autoisomorphism of $\mathbb{K}$.
\item For any $A_1,A_2\in \mathcal{K}$ and embeddings $\iota_1:A_1\rightarrow K_n$, $\rho:A_1\rightarrow A_2$, for some $n$ and where $\iota_1,\rho\in \mathcal{F}$, there exists $m>n$ and an embedding $\iota_2: A_2\rightarrow K_m$ such that $\iota_2\circ \rho=\iota_1$.

\end{enumerate}
\end{thm}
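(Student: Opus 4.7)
The plan is to prove the theorem by the classical back-and-forth construction, with only cosmetic adaptations for the restricted morphism class $\mathcal{F}$. I would first build $\mathbb{K}$ as a direct limit of a carefully chosen chain in $(\mathcal{K},\mathcal{F})$, then read properties (1) and (4) directly off the construction, and derive (2), (3) and uniqueness by back-and-forth arguments that repeatedly invoke (4).

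For the construction, since $\mathcal{K}$ has only countably many isomorphism types and each member is finitely generated, the collection of triples $(A,B,\rho)$ with $A,B \in \mathcal{K}$ and $\rho \colon A \hookrightarrow B$ in $\mathcal{F}$ is countable; enumerate it with infinite repetition. Starting from any $K_1 \in \mathcal{K}$, at stage $n$ I would take the next triple $(A,B,\rho)$ and, for each of the finitely many $\mathcal{F}$-embeddings $\iota \colon A \hookrightarrow K_n$, apply amalgamation to $\iota$ and $\rho$ in turn to enlarge $K_n$ to some $K_{n+1} \in \mathcal{K}$ carrying an $\mathcal{F}$-embedding of $B$ that extends $\iota$. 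Interleaved with these amalgamation stages, I would use the joint embedding property to ensure that every isomorphism type in $\mathcal{K}$ eventually appears as a subobject of some $K_m$. Property (1) is then the construction itself, and (4) follows because any prescribed data $(\iota_1 \colon A_1 \hookrightarrow K_n,\; \rho \colon A_1 \hookrightarrow A_2)$ eventually matches the triple processed at some later stage.

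Next I would derive (3) by back-and-forth: given isomorphic $A_1, A_2$ and $\mathcal{F}$-embeddings $\iota_1, \iota_2$ into some $K_n$, enumerate a generating set of $\mathbb{K}$ and alternate sides, at each step using (4) to extend the current partial isomorphism so as to include the next generator. Property (2) follows from the one-sided version of the same argument, embedding any given chain-limit $\mathbb{A}$ of members of $\mathcal{K}$ into $\mathbb{K}$ by repeatedly applying (4). Uniqueness is a two-sided back-and-forth between $\mathbb{K}$ and any candidate $\mathbb{K}'$ satisfying (1)--(4), invoking (4) on each side alternately to build matching isomorphic chains whose union is the desired isomorphism.

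The step I expect to require the most care is not any single argument but the bookkeeping forced by $\mathcal{F}$: each extension produced by the back-and-forth must itself be an $\mathcal{F}$-embedding, and amalgamation and (4) are only guaranteed against $\mathcal{F}$-data. This is the place where a classical textbook argument has to be re-inspected rather than simply quoted. Provided that $\mathcal{F}$ is closed under composition with isomorphisms, a tacit and natural hypothesis given that $\mathcal{F}$ contains all identities and isomorphisms, the argument goes through without further modification; if closure under composition with arbitrary $\mathcal{F}$-morphisms were to fail, one would need an additional hypothesis on the category to salvage (3) and the uniqueness clause.
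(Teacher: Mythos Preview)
The paper does not prove this theorem; it is stated with a reference to Hodges and to Evans and then used as a black box. Your outline is essentially the standard textbook argument one finds in those references, so there is nothing to compare against here.

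One small point worth tightening: at the inductive step you write ``for each of the finitely many $\mathcal{F}$-embeddings $\iota\colon A\hookrightarrow K_n$''. In the generality of the stated theorem the structures are only finitely generated, not finite, so the set of such embeddings is a priori merely countable. The usual fix is to enumerate, with infinite repetition, all pairs consisting of an $\mathcal{F}$-embedding $\iota\colon A\hookrightarrow K_m$ (for some $m$) together with an $\mathcal{F}$-arrow $\rho\colon A\hookrightarrow B$, and at stage $n$ process the $n$th such pair if its $\iota$ already lands in the current $K_n$, otherwise skip. In the specific category $\Gr_K$ of the paper your finiteness claim happens to be true, since morphisms send free generators to free generators, but the general statement needs the more careful bookkeeping.
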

\subsection{Construction of the group}
\begin{defin}[Category $\Gr_K$]\label{GRdefin}
Let $K>0$ be an arbitrary positive rational. Objects of $\Gr_K$ are free groups with finitely many free generators equipped with a finitely generated rational metric bounded by $K$ (i.e. the distance between two elements of the group is the minimum of $K$ and the minimum from Definition \ref{fingenmetric}. More formally, an object is a pair $(F=\{f_1,\ldots,f_n\},d_F)$, where $F$ is a finite set and $d_F$ is a rational finitely generated metric bounded by $K$ on a free group with $F$ as a set of free generators. We shall denote the free group corresponding to this pair by $G_F$.

A morphism between $(F,d_F)$ and $(H,d_H)$ is a group isometric embedding $\iota: G_F\hookrightarrow G_H$ such that for every $f\in F$ we have $\iota(f)\in H$. In particular, $|F|\leq |H|$ and $G_H\cong G_F\ast F_{|H|\setminus |F|}$, where $F_{|H|\setminus |F|}$ is a free group of $|H|\setminus |F|$ generators.
\end{defin}
Our next goal will be to prove that $\Gr$ is a Fra\" iss\' e class which will mainly consist of proving that it is an amalgamation class. Before we do that we introduce some machinery useful for working with metric free groups.

If $F$ is some set, then to each word over the alphabet $F\cup F^{-1}\cup\{1\}$, where $F^{-1}=\{f^{-1}:f\in F\}$ we may associate some element of the free group, denoted again by $G_F$, freely generated by elements of $F$. Formally, we will distinguish between expressions $f_1f_2\ldots f_n$ and $f_1\cdot f_2\cdot\ldots\cdot f_n$, where $f_1,f_2,\ldots,f_n\in F\cup F^{-1}\cup\{1\}$ as the former shall denote some word, while the latter the corresponding group element in $G_F$. Obviously two different words may represent the same corresponding group element and group elements are in one-to-one correspondence with the irreducible words which are words that do not contain subwords of the form $ff^{-1}$, $1f$ or $f1$ for some $f\in F\cup F^{-1}$.

If $F$ is any set, by $W(F)$ we shall denote the set of all irreducible words over the alphabet $F\cup F^{-1}\cup\{1\}$. If $g\in G_F$, i.e. an element belonging to the free group generated by $F$, then by $|g|$ we shall denote the length of the corresponding irreducible word from $W(F)$. If $N$ is a natural number then by $W_N(F)$ we shall denote the elements of $G_F$ such that the corresponding irreducible words from $W(F)$ have length at most $N$.

This is the main theorem of this section.
\begin{thm}
The category $\Gr_K$ is a Fra\" iss\' e class.
\end{thm}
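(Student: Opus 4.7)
I verify the three conditions for a Fra\"iss\'e class: countability up to isomorphism, joint embedding, and amalgamation. Countability is clear since an object is specified by a finite generating set together with finitely many rational distance values. Joint embedding is the amalgamation over the trivial (empty-generator) object, so the essential task is amalgamation.

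Given morphisms $\phi_B : (A, d_A) \hookrightarrow (B, d_B)$ and $\phi_C : (A, d_A) \hookrightarrow (C, d_C)$ in $\Gr_K$, since morphisms send free generators to free generators I identify $A$ with its images, so $G_B = G_A \ast F_{B \setminus A}$ and $G_C = G_A \ast F_{C \setminus A}$. Set $D := B \cup_A C$ and $G_D := G_A \ast F_{B \setminus A} \ast F_{C \setminus A}$, the free group on $D$ --- algebraically, the amalgamated free product $G_B \ast_{G_A} G_C$. I define a symmetric rational function $d'$ on $S := D \cup D^{-1} \cup \{1\}$: equal to $d_B$ on $(B \cup B^{-1} \cup \{1\})^2$, equal to $d_C$ on $(C \cup C^{-1} \cup \{1\})^2$ (consistent on the overlap since both restrict to $d_A$ on $A$), and on a mixed pair
\[
  d'(x, y) := \min\Bigl(K,\ \min_{a \in A \cup A^{-1} \cup \{1\}} \bigl(d_B(x, a) + d_C(a, y)\bigr)\Bigr).
\]
Routine checks yield that $d'$ is a metric on $S$ with $d'(x^{-1}, y^{-1}) = d'(x, y)$. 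Let $d_D$ be the finitely generated rational metric on $G_D$ generated by $d'$, capped at $K$; then $(D, d_D) \in \Gr_K$.

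The key point is that the inclusions $G_B, G_C \hookrightarrow G_D$ are isometric, i.e., $d_D$ extends both $d_B$ and $d_C$. The inequality $d_D|_{G_B} \leq d_B$ is immediate, since any $B$-decomposition of a pair $(g, h) \in G_B^2$ is already a $D$-decomposition. For the reverse inequality I take an optimal $D$-decomposition $g = d_1 \cdots d_n$, $h = d'_1 \cdots d'_n$ of cost $d_D(g, h)$ and rewrite it as a $B$-decomposition of no greater cost in two stages. \emph{Stage 1} (eliminating mixed pairs): for each mixed pair $(d_i, d'_i)$, pick the optimal $a_i^* \in A \cup A^{-1} \cup \{1\}$ achieving $d'(d_i, d'_i) = d_B(d_i, a_i^*) + d_C(a_i^*, d'_i)$, and use bi-invariance together with Fact~\ref{trivfact} to split position $i$ into successive pairs each lying entirely on the $B$-side or the $C$-side, preserving the total cost and the products $g, h$. \emph{Stage 2} (eliminating pure-$C$ pairs): since $g, h \in G_B$ contain no $F_{C \setminus A}$-content in their reduced form in $G_A \ast F_{B \setminus A} \ast F_{C \setminus A}$, the maximal $C$-subwords of the $g$-row and the $h$-row must each multiply to an element of $G_A$; using the compatibility $d_C|_{G_A} = d_A = d_B|_{G_A}$, each such $C$-subword can be replaced by a $B$-side factorization of the same element of $G_A$ at no additional cost. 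The result is a $B$-decomposition of $(g, h)$ of cost at most the original, proving $d_B(g, h) \leq d_D(g, h)$.

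The main obstacle is Stage 2: converting the inevitable algebraic cancellations of $F_{C \setminus A}$-letters (forced by the product lying in $G_B$) into a cost-preserving re-expression using only $B$-letters. This is essentially a Britton-lemma-style analysis of reduced words in the free product $G_A \ast F_{B \setminus A} \ast F_{C \setminus A}$, together with the crucial compatibility $d_B|_{G_A} = d_C|_{G_A}$ inherited from the amalgamating morphisms.
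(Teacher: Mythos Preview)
Your argument has two genuine gaps.

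The first is your choice of generating set for the amalgamated metric. You set $S=D\cup D^{-1}\cup\{1\}$, the free generators together with their inverses and the identity. But an object of $\Gr_K$ is a free group with a \emph{finitely generated} metric in the sense of Definition~\ref{fingenmetric}: $d_B$ is generated by its values on some finite symmetric set $A_B\subseteq G_B$ containing $1$, which in general strictly contains $B\cup B^{-1}\cup\{1\}$. Your $d_D$, restricted to $G_B$, is then the Graev-type metric determined by the values of $d_B$ on the free generators alone, and this need not equal $d_B$. For a concrete failure take $A=\varnothing$, $B=\{b\}$, $K=10$, and $d_B$ generated by $A_B=\{1,b^{\pm1},b^{\pm2}\}$ with $d_B(b,1)=1$ and $d_B(b^2,1)=3/2$: your construction gives $d_D(b^2,1)=2\neq d_B(b^2,1)$, so the inclusion $G_B\hookrightarrow G_D$ is not isometric. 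In particular your claim ``$d_D|_{G_B}\le d_B$ is immediate'' is already false, since a decomposition witnessing $d_B(g,h)$ uses letters from $A_B$, not from $S$. The paper amalgamates over the metric-generating sets $A_1\cup A_2$ rather than over the free generators precisely to avoid this.

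The second gap is Stage~2, which is where the real content lies. Even after correcting the generating sets, your assertion that each maximal $C$-run in the $g$-row and $h$-row of an optimal decomposition multiplies into $G_A$ is not justified; for instance nothing you have said rules out patterns like a single $C$-letter flanked by $B$-letters that cancel elsewhere. Establishing that an optimal $D$-decomposition can be rewritten as a $B$-decomposition of no greater cost --- equivalently, that the finitely-generated amalgam metric restricted to $G_B$ dominates $d_B$ --- is exactly the hard step. The paper does not attempt this directly: it compares $d_3$ with Slutsky's metric $\underline{p}$ on the amalgamated free product and invokes his Theorem~5.10 from \cite{Slu}, which asserts that $\underline{p}$ extends both $d_1$ and $d_2$. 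Your ``Britton-lemma-style analysis'' names the right circle of ideas but does not carry out the argument; doing so amounts to reproving Slutsky's theorem in this case.
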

\begin{proof}
Clearly, $\Gr_K$ is countable. We prove the amalgamation property. The joint embedding property is just a special case.\\ Let $(G_0,d_0),(G_1,d_1),(G_2,d_2)\in \Gr$ be free groups with finitely many generators equipped with finitely generated metrics such that there are morphisms $\iota_1: G_0\hookrightarrow G_1$ and $\iota_2 :G_0\hookrightarrow G_2$. Consequently, algebraically we can think of $G_1$ as $G_0\ast H_1$ and of $G_2$ as $G_0\ast H_2$, where $H_1$ and $H_2$ are free groups of finitely many generators. We need to find a group $(G_3,d_3)\in \Gr$ and morphisms $\rho_1 : G_1\hookrightarrow G_3$, $\rho_2: G_2\hookrightarrow G_3$ such that we have $\rho_1\circ \iota_1=\rho_2\circ \iota_2$.

Algebraically, we may define $G_3$ to be $G_0\ast H_1\ast H_2$. $\rho_1:G_0\ast H_1\hookrightarrow G_0\ast H_1\ast H_2$ and $\rho_2: G_0\ast H_2\hookrightarrow G_0\ast H_1\ast H_2$ are then the canonical embeddings. We need to define a finitely generated rational metric on $G_0\ast H_1\ast H_2$ so that $\rho_1$ and $\rho_2$ are also isometries.

Let $A_i\subseteq G_i$, for $i\in \{0,1,2\}$, be the finite set such that the metric $d_i$ is generated by values on this set. To simplify the notation, we shall not distinguish between $A_1$, resp. $A_2$, and $\rho_1[A_1]$, resp. $\rho_2[A_2]$, and $A_0$ and $\rho_2\circ \iota_2[A_0]=\rho_1\circ \iota_1[A_0]$, i.e. for $i\in \{0,1,2\}$, we think about $A_i$ as a subset of $G_0\ast H_1\ast H_2$. Without loss of generality, we assume that $A_0\subseteq A_1,A_2$ (actually, we may assume that $A_0=A_1\cap A_2$). Since $A_1$ and $A_2$ are equipped with metrics ($d_1\upharpoonright A_1$, resp. $d_2\upharpoonright A_2$) that agree on $A_0$, we may consider the metric amalgamation of $A_1$ and $A_2$ over $A_0$ denoted by $A_3$. That shall be the generating set for the metric on $G_0\ast H_1\ast H_2$.

In other words, $A_3=A_1\cup A_2$ and for any $a,b\in A_3$ we define $d'(a,b)$ to be $$d_i\upharpoonright A_i (a,b)$$ if $a,b\in A_i$ for $i\in\{1,2\}$; note that if $a,b\in A_1\cap A_2$ then $d_1\upharpoonright A_1 (a,b)=d_2\upharpoonright A_2 (a,b)$;\\

\noindent otherwise, we define $d'(a,b)$ to be
$$\min\{K,\min\{d_i\upharpoonright A_i(a,x)+d_j\upharpoonright A_j(x,b):$$ $$i\neq j\in \{1,2\},\\a\in A_i,b\in A_j,x\in A_0\}\}.$$\\

Finally, we define a bi-invariant metric $d_3$ on $G_0\ast\ast H_1\ast H_2$ generated by the values of $d'$ on $A_3$. That is, for any $a,b\in G_0\ast H_1\ast H_2$ we define 
\begin{multline}\label{amalmetric}
d_3(a,b)=\min\{K,\min\{d'(a_1,b_1)+\ldots+d'(a_m,b_m):\\m\in \Nat,\forall i\leq m (a_i,b_i\in A_3),a=a_1\cdot\ldots\cdot a_m,b=b_1\cdot\ldots\cdot b_m\}\}.
\end{multline}
Clearly, this is a finitely generated rational metric bounded by $K$ (generated by values on $A_3$). It remains to check that for any $a,b\in G_0\ast H_1$ we have $d_3(a,b)=d_1(a,b)$, i.e. $\rho_1$ is an isometry (we again do not distinguish between $a,b\in G_0\ast H_1$ and $\rho_1(a),\rho_1(b)\in G_0\ast H_1\ast H_2$). Analogously, the same for pairs from $G_0\ast H_2$. We will do just the former as the latter is completely analogous.\\

Let $a,b\in G_0\ast H_1$ be given. Suppose that $d_3(a,b)=d'(a_1,b_1)+\ldots+d'(a_m,b_m)$ for some $a_1,b_1,\ldots,a_m,b_m\in A_3$ such that $a=a_1\cdot\ldots\cdot a_m$ and $1=b_1\cdot\ldots\cdot b_m$. We may suppose that $d'(a_1,b_1)+\ldots+d'(a_m,b_m)\leq K$. We may also suppose that for every $i\leq m$ we have $d'(a_i,b_i)=d_3(a_i,b_i)$. Then we claim that, without loss of generality, we may assume that for every $i\leq m$ we have that either $(a_i,b_i)\in A_1$ and thus $d'(a_i,b_i)=d_1(a_i,b_i)$, or $(a_i,b_i)\in A_2$ and thus $d'(a_i,b_i)=d_2(a_i,b_i)$ (recall again that if $a_i,b_i\in A_1\cap A_2=A_0$ then $d_1(a_i,b_i)=d_2(a_i,b_i)$). Indeed, if for some $i\leq m$ we have that, let us say, $a_i\in A_1$ and $b_i\in A_2$, then by definition $d'(a_i,b_i)=d_1(a_i,x)+d_2(x,b_i)$ for some $x\in A_0$. Then we could replace the pair $(a_i,b_i)$ by three pairs $(a_i,x)$, $(x^{-1},x^{-1})$ and $(x,b_i)$ so that $d'(a_i,b_i)=d'(a_i,x)+d'(x^{-1},x^{-1})+d'(x,b_i)$.

It is then immediate that $d_3(a,b)\leq d_1(a,b)$ since the minimum in the definition of $d_3$ is taken over a greater set. We need to prove the other inequality. To do that, we use a result of Slutsky from \cite{Slu}. To state his result, we introduce some notation. First, analogously as we defined $d'$ to be the amalgam metric on $A_1 \cup A_2$ over $A_0$, we define an an amalgam metric $p:G_1\cup G_2\rightarrow \Rea$ on $G_1\cup G_2$ over $G_0$. Then for any $a,b\in G_3$ we set
\begin{multline}\label{Slutmetric}
\underline{p}(a,b)=\inf\{p(a_1,b_1)+\ldots+p(a_n,b_n):\\a_1\cdot \ldots\cdot a_n=a,b_1\cdot\ldots b_n=b\}.
\end{multline}
Then we have:
\begin{thm}[Slutsky, Theorem 5.10 \cite{Slu}]
$\underline{p}$ is a bi-invariant metric on $G_3$ extending $d_1$ on $G_1$ and $d_2$ on $G_2$.
\end{thm}
It suffices for us to check that $\underline{p}(a,b)\leq d_3(a,b)$. Indeed, then we will have that $d_3(a,b)\leq d_1(a,b)=\underline{p}(a,b)\leq d_3(a,b)$, thus $d_3(a,b)=d_1(a,b)$ and we will be done. However, that immediately follows from the fact that $d_3(a,b)\leq K$ and from the definitions \eqref{amalmetric} and \eqref{Slutmetric}, since in \eqref{Slutmetric} the infimum is taken over a greater set.

This finishes the proof.
\end{proof}
It follows that the class $\Gr_K$ has a Fra\" iss\' e limit denoted here by $G_K$. It is a free group of countably many free generators equipped with a bi-invariant metric $d_K$ bounded by $K$. The following fact characterizes $G_K$. It is a special instance of the Fra\" iss\' e theorem \ref{fraissethm}.
\begin{fact}\label{Gcharac}
$G_K$ is characterized by the following two properties:
\begin{enumerate}
\item It contains as a subgroup every free group of finitely many free generators equipped with a finitely generated rational metric bounded by $K$.
\item If $F\leq G_K$ is a subgroup that belongs to $\Gr_K$ (i.e. free group of finitely many free generators equipped with a finitely generated rational metric) and $H\in \Gr_K$ is another such a group that can be written as $F'\ast H'$, where $F$ and $F'$ are isometrically isomorphic via an isomorphism $\iota:F\rightarrow F'$, then there exists an isometric group monomorphism $\rho:H\hookrightarrow G_K$ such that $\rho\circ \iota=\mathrm{id}_F$.

\end{enumerate}
\end{fact}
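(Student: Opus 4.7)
The plan is to derive both properties as direct consequences of the corresponding clauses of the Fra\"iss\'e theorem (Theorem \ref{fraissethm}) applied to the class $\Gr_K$, whose limit is by definition $G_K$. Throughout, I use that $G_K$ is a direct limit of a chain $K_1 \to K_2 \to \ldots$ with $K_i \in \Gr_K$ and arrows in $\mathcal{F}$, and that the free generators of $G_K$ are exactly the accumulated images of the free generators of the $K_n$'s, since morphisms in $\Gr_K$ are by definition required to send generators to generators.

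For property (1), given $A \in \Gr_K$, view $A$ itself as the direct limit of the constant system $A \xrightarrow{\mathrm{id}} A \xrightarrow{\mathrm{id}} \ldots$, whose arrows are identities and therefore belong to $\mathcal{F}$. Clause (2) of Theorem \ref{fraissethm} then provides an embedding of $A$ into $G_K$ belonging to $\mathcal{F}$, so $A$ occurs as a subgroup of $G_K$ with its finitely many free generators mapped into free generators of $G_K$.

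For property (2) I apply clause (4). Since $F \leq G_K$ belongs to $\Gr_K$, its finitely many free generators lie among the free generators of $G_K$, so they already sit inside some $K_n$ as part of its free basis, and the inclusion $\iota_1 \colon F \hookrightarrow K_n$ is a morphism in $\mathcal{F}$. On the other side, $F'$ is a free factor of $H = F' \ast H'$, hence the composite $\tilde{\iota} \colon F \xrightarrow{\iota} F' \hookrightarrow H$ is also a morphism in $\mathcal{F}$. Clause (4) of Theorem \ref{fraissethm} now yields some $m \geq n$ and a morphism $\iota_2 \colon H \to K_m$ in $\mathcal{F}$ satisfying $\iota_2 \circ \tilde{\iota} = \iota_1$ (viewed as landing in $K_m$). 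Post-composing $\iota_2$ with the canonical embedding $K_m \hookrightarrow G_K$ gives the desired $\rho$, and by construction $\rho \circ \iota$ coincides with the inclusion $F \hookrightarrow G_K$, i.e.\ with $\mathrm{id}_F$.

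The only point requiring a small verification is the reduction of the inclusion $F \hookrightarrow G_K$ to a morphism $F \hookrightarrow K_n$ at some finite stage; this is immediate from the characterization of $\Gr_K$-morphisms as generator-preserving embeddings together with the fact that $F$ is finitely generated. Beyond that, the statement is a routine instance of the universality and one-point extension properties built into any Fra\"iss\'e limit, the substantive work of the section having already been the verification that $\Gr_K$ is a Fra\"iss\'e class.
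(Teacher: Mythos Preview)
Your proposal is correct and follows exactly the approach indicated in the paper, which simply states that Fact~\ref{Gcharac} ``is a special instance of the Fra\"iss\'e theorem~\ref{fraissethm}'' without writing out any details. Your argument spells out precisely how clauses (2) and (4) of Theorem~\ref{fraissethm} yield properties (1) and (2) respectively, and your remark about factoring the inclusion $F \hookrightarrow G_K$ through some $K_n$ via the generator-preserving nature of $\Gr_K$-morphisms is the right way to handle the one point not entirely automatic.
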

The second item is usually called the finite extension property of $G_K$.

Moreover, since $d_K$ is bi-invariant, the group operations are automatically continuous with respect to the topology induced by $d_K$ and extend to the metric completion of $G_K$ denoted here by $\mathbb{G}_K$ (we refer to \cite{Do1} and \cite{Do2} where the same facts were proved or to \cite{Gao} for other information regarding metrics on groups).

The next theorem implies that we may restrict our attention only to those groups that have a free group as a countable dense subgroup.
\begin{thm}\label{free-dense}
Let $(G,d)$ be an arbitrary separable topological group (of countable weight equivalently in this case) equipped with a compatible bi-invariant metric. Then there exists a bi-invariant metric $\bar{d}$ extending $d$, which is defined on $G\ast F_\infty$, where $F_\infty$ is a free group with countably many generators, such that $F_\infty$ is dense in $G\ast F_\infty$.

The same holds true if the metric in consideration is bounded by some $K$.
\end{thm}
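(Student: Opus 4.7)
The plan is to build $\bar d$ as a direct limit of bi-invariant metrics $d_n$ on the intermediate groups $G_n := G * F_n$ (where $F_n$ denotes the free group on $f_1,\dots,f_n$), adding one free generator at a time and arranging that each newly added $f_{n+1}$ lies within $\epsilon_{n+1}$ of a prescribed element $g_{n+1}$ of $G$. First I would fix a countable dense sequence $\{g_n\}_{n \in \Nat}$ in $G$ and a summable sequence of positive reals $\epsilon_n \to 0$, and set $d_0 := d$ on $G_0 := G$.

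The crucial inductive step is to pass from $d_n$ on $G_n$ to a bi-invariant metric $d_{n+1}$ on $G_{n+1} = G_n * \langle f_{n+1}\rangle$ that extends $d_n$ and satisfies $d_{n+1}(f_{n+1}, g_{n+1}) \leq \epsilon_{n+1}$. I would realise $G_{n+1}$ as the amalgamated free product $G_n *_{\langle g_{n+1}\rangle} H_{n+1}$, where $H_{n+1} = \langle \gamma\rangle * \langle f_{n+1}\rangle$ with $\gamma$ identified in the amalgamation with $g_{n+1} \in G_n$. On $H_{n+1}$ I would place the bi-invariant metric $d_{H_{n+1}}$ obtained from a Graev-type construction seeded by the pointed metric subspace $\langle g_{n+1}\rangle \cup \{f_{n+1}^{\pm 1}\}$, with distances along $\langle g_{n+1}\rangle$ inherited from $d_n$ and the new link distance declared to be $d_{H_{n+1}}(\gamma, f_{n+1}) := \epsilon_{n+1}$. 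Slutsky's theorem (the same one invoked in the previous proof) then produces a bi-invariant, positive-definite metric $d_{n+1}$ on $G_{n+1}$ extending both $d_n$ and $d_{H_{n+1}}$, and by choice of the link we get $d_{n+1}(f_{n+1}, g_{n+1}) \leq \epsilon_{n+1}$.

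Setting $\bar d := \bigcup_n d_n$ on $G * F_\infty = \bigcup_n G_n$ yields a bi-invariant metric extending $d$. Density of $F_\infty$ is then immediate: $\bar d(f_n, g_n) \leq \epsilon_n \to 0$ places every $g_n$ in $\overline{F_\infty}$, and for a general $w = a_1 \cdots a_k \in G * F_\infty$ with $a_i \in G \cup \{f_n^{\pm 1}\}$, replacing each $G$-factor $g_{m_i}$ by a suitable $f_{m_i}$ (with $m_i$ large enough that $\bar d(g_{m_i}, f_{m_i})$ is as small as desired) produces an element of $F_\infty$ at $\bar d$-distance from $w$ bounded by $\sum_i \bar d(g_{m_i}, f_{m_i})$, by bi-invariance. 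The bounded variant of the theorem follows by truncating every distance at $K$ at every step, since the Graev and Slutsky infima commute with $\min(\cdot, K)$.

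The principal obstacle will be verifying the compatibility hypothesis needed for Slutsky's theorem at each inductive step: the metric $d_{H_{n+1}}$ must restrict to \emph{exactly} $d_n\!\upharpoonright\!\langle g_{n+1}\rangle$ on the amalgamated cyclic subgroup, and the link distance $\epsilon_{n+1}$ must not be deflated by cycling through powers of $\gamma$. This is why the Graev construction on $H_{n+1}$ must be seeded by the full pointed subspace $\langle g_{n+1}\rangle \cup \{f_{n+1}^{\pm 1}\}$ rather than merely by the two-element generating set $\{\gamma, f_{n+1}\}$; the restriction property then follows from the standard fact that the Graev metric on the free group over a pointed metric space restricts back to the given metric on the space, combined with the bi-invariance inequality $d_n(g_{n+1}^k, 1) \leq \sum_i d_n(g_{n+1}^{a_i}, 1)$ whenever $\sum_i a_i = k$. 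Possible torsion of $g_{n+1}$ (replacing the free factor $\langle\gamma\rangle$ by the appropriate cyclic group) presents no additional conceptual difficulty.
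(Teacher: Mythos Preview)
Your inductive strategy is sound, but the density argument as written does not go through. From $\bar d(f_n,g_n)\leq\epsilon_n$ you only get \emph{one} element of $F_\infty$ within $\epsilon_n$ of $g_n$; this does not place $g_n$ in $\overline{F_\infty}$. Likewise, in your approximation of a general $w$, the phrase ``$m_i$ large enough that $\bar d(g_{m_i},f_{m_i})$ is as small as desired'' presupposes that for every $a\in G$ and every $\delta>0$ there is an \emph{arbitrarily large} index $m$ with $d(a,g_m)<\delta$, which is not guaranteed by merely fixing a sequence with dense range. The paper handles exactly this point by routing through a bijection $\Nat\to\Nat\times\Nat$, so that each element $h_n$ of the countable dense set is approximated by some free generator to precision $1/k$ for \emph{every} $k$. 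You need the same bookkeeping (or equivalently, arrange that every tail $\{g_n:n\geq N\}$ is still dense).

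On the extension step itself your route differs from the paper's. The paper applies Theorem~\ref{Graevext} (Theorem~0.2 of \cite{Do2}) directly to $G\ast F_{n-1}$: one specifies the metric on the one-point extension $G\ast F_{n-1}\amalg\{f_n\}$ and the theorem hands back a bi-invariant metric on $G\ast F_n$ in a single stroke, with no compatibility check needed. You instead build an auxiliary metric on $\langle g_{n+1}\rangle\ast\langle f_{n+1}\rangle$ and then amalgamate with $G_n$ over the cyclic subgroup via Slutsky. This works, but it is a detour: the ``principal obstacle'' you identify (that the Graev-type metric on $H_{n+1}$ must restrict exactly to $d_n$ on $\langle g_{n+1}\rangle$) is precisely an instance of Theorem~\ref{Graevext} applied to the cyclic group, so you are invoking that theorem anyway and then layering Slutsky on top. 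Applying Theorem~\ref{Graevext} once to $G_n$ itself, as the paper does, eliminates both the amalgamation step and the torsion case-split.
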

In order to prove it, we invoke the following theorem which we proved in \cite{Do2}.
\begin{thm}[see Theorem 0.2 in \cite{Do2}]\label{Graevext}
Let $(G,d_G)$ be a group with bi-invariant metric and let $(X,d_X)$ be a metric space. Suppose that $d'$ is a metric on the disjoint union $G\amalg X$ which extends both $d_G$ and $d_X$, and such that for every $x\in X$ we have $\inf \{d'(g,x):g\in G\}>0$ (equivalently, $G$ is closed in $G\amalg X$). Then $d'$ extends to the bi-invariant metric $\delta$ on $G\ast F(X)$, where $F(X)$ is the free group with $X$ as a set of generators.
\end{thm}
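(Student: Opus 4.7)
My plan is to apply Theorem \ref{Graevext} directly, building the set of free generators inside an enlarged metric space $G\amalg X$ so that each generator is pre-attached to a prescribed element of a dense subset of $G$ with prescribed precision. Concretely, fix a dense sequence $\{h_k\}_{k\in\Nat}\subseteq G$ and take $X=\{x_{k,n}:k,n\in\Nat\}$ as the set of formal generators. The metric $d$ on $G$ is kept; for the cross-distances and generator-generator distances we declare
\begin{equation*}
d'(x_{k,n},g)=\tfrac{1}{n}+d(h_k,g),\qquad d'(x_{k,n},x_{k',n'})=\tfrac{1}{n}+d(h_k,h_{k'})+\tfrac{1}{n'},
\end{equation*}
for all $g\in G$ and all indices (with $d'(x_{k,n},x_{k,n})=0$). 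The intuition is that $x_{k,n}$ will be forced to sit within distance roughly $1/n$ of $h_k$ in the eventual group, giving a dense pool of approximants drawn from the free generators.

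The routine step is to check that $d'$ is a metric on $G\amalg X$ extending $d$. Each triangle inequality involving one or more $x_{k,n}$ reduces, after cancelling the $1/n$ terms, to the triangle inequality for $d$ on $G$ (modulo a non-negative slack coming from the reciprocals). Since $d'(g,x_{k,n})\ge 1/n>0$ for every $g\in G$, the point $x_{k,n}$ is isolated from $G$, so $G$ is closed in $G\amalg X$. Hence the hypotheses of Theorem \ref{Graevext} are met, and we obtain a bi-invariant metric $\bar d$ on $G\ast F(X)=G\ast F_\infty$ extending $d'$, and in particular extending $d$ on $G$.

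It remains to show that $F_\infty$ is dense. For any $g\in G$ and $\varepsilon>0$, pick $k$ with $d(h_k,g)<\varepsilon/2$ and $n>2/\varepsilon$; then $\bar d(x_{k,n},g)=d'(x_{k,n},g)<\varepsilon$. Thus $G\subseteq\overline{F_\infty}$. Since $\bar d$ is bi-invariant, left and right multiplication are continuous, so the closure $\overline{F_\infty}$ is a subgroup of $G\ast F_\infty$; as it contains both $G$ and the generators $X$, it equals the whole free product, which is exactly the density claim.

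For the bounded version, one replaces $d'$ by $\min\{K,d'\}$ from the outset; this truncation preserves the triangle inequality and the positivity $\inf_g d'(g,x_{k,n})\ge\min\{K,1/n\}>0$, so Theorem \ref{Graevext} still applies. Equivalently, one can keep the construction above and afterwards replace $\bar d$ by $\min\{K,\bar d\}$, which remains bi-invariant, still extends $d$ (since $d\le K$ already), and keeps the density argument intact (the same $x_{k,n}$ continue to approximate $g$). The only part I regard as mildly delicate is verifying the triangle inequalities for $d'$ in all configurations, but each is a short case analysis and causes no real obstruction.
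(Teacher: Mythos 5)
Your proposal does not prove the statement in question at all: the statement is Theorem \ref{Graevext} itself (the extension result from \cite{Do2}), and your very first step is ``apply Theorem \ref{Graevext} directly,'' so the argument is circular with respect to its target. What you actually establish is Theorem \ref{free-dense} (the existence of a bi-invariant extension of $d$ to $G\ast F_\infty$ in which $F_\infty$ is dense), taking \ref{Graevext} as a black box. The genuine content of \ref{Graevext} --- defining a bi-invariant metric $\delta$ on the free product $G\ast F(X)$, e.g.\ as the largest bi-invariant (pseudo)metric dominated by the given data, $\delta(a,b)=\inf\{d'(a_1,b_1)+\ldots+d'(a_n,b_n): a=a_1\cdot\ldots\cdot a_n,\ b=b_1\cdot\ldots\cdot b_n\}$ with letters drawn from $G\cup X\cup X^{-1}\cup\{1\}$ and $d'$ suitably extended to inverses, and then proving the hard lower bound that $\delta$ does not collapse distances, i.e.\ that $\delta$ restricted to $G\amalg X$ equals $d'$ and that the hypothesis $\inf\{d'(g,x):g\in G\}>0$ prevents degeneration to a pseudometric --- is nowhere addressed in your text. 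That lower-bound verification is exactly where the Graev-metric machinery (trivial words and matches as in Ding--Gao, or Slutsky's free-product results) is needed, and it is the substance of Theorem 0.2 of \cite{Do2}; the present paper does not reprove it but cites it, so any self-contained proof attempt must supply this analysis.

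For what it is worth, read as a proof of Theorem \ref{free-dense} your argument is essentially correct and is a mild streamlining of the paper's: where the paper iterates Theorem \ref{Graevext} countably many times, adjoining one point $f_n$ at distance $1/F_2(n)+d_{n-1}(h_{F_1(n)},\cdot)$ per step (truncating by $K$ after each step in the bounded case), you adjoin the whole countable set $X=\{x_{k,n}\}$ in a single application, which is legitimate since \ref{Graevext} allows an arbitrary metric space $X$; your metric $d'$ on $G\amalg X$ is indeed a metric with $G$ closed, and the density argument via bi-invariance (closure of $F_\infty$ is a subgroup containing $G$ and $X$) is sound, as is the final truncation $\min\{K,\bar d\}$ for the bounded version. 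But none of this discharges the statement you were asked to prove.
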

\begin{proof}[Proof of Theorem \ref{free-dense}]
Let $\{h_n:n\in \Nat\}$ be an enumeration of a countable dense subgroup of $G$. By induction, applying Theorem \ref{Graevext} countably many times, we produce $G\ast F_\infty$, where the set of free generators for $F_\infty$ is $\{f_m:m\in \Nat\}$, such that for every $n, k\in\Nat$ there is $m$ such that $d(f_m,h_n)<1/k$. We note that we do not consider $\Nat$ to contain $0$. With Theorem \ref{Graevext}, it is easy to do. Let $F:\Nat\rightarrow \Nat\times \Nat$ be a bijection and let $F_i(n)$, for $n\in \Nat$ and $i\in \{1,2\}$, denote the projection of $F(n)$ on the $i$-th coordinate. At the $n$-th step of the induction, when we have already produced a metric $d_{n-1}$ on $G\ast F_{n-1}$, we define a metric $d'_n\supseteq d_{n-1}$ on $G\ast F_{n-1}\amalg \{f_n\}$, where $f_n$ is a new added point, such that for every $g\in G\ast F_{n-1}$ we have $d'_n(x,g)=1/F_2(n)+d_{n-1}(h_{F_1(n)},g)$. Applying Theorem \ref{Graevext} we get a metric $d_n$ on $G\ast F_{n-1}\ast F_1\cong G\ast F_n$ such that $d_n(f_n,h_{F_
1(n)})=1/F_2(n)$.

It is easy to check that when the induction is finished we get a metric on $G\ast F_\infty$ as desired.

If $d$ on $G$ is bounded by $K$ then we proceed as above. Just after every application of Theorem \ref{Graevext} we decrease the metric so that it is $K$-bounded; i.e. if Theorem \ref{Graevext} gives us a metric $d'_n$ on $G\ast F_n$ we define $K$-bounded $d_n$ on $G\ast F_n$ as follows: for any $a,b\in G\ast F_n$ we put $d_n(a,b)=\min\{K,d'_n(a,b)\}$.
\end{proof}
We now define a certain distance on the set of all metrics on some free group. It will be of great importance both in proving that $\mathbb{G}_K$ is universal and in proving that there is no metrically universal unbounded group. Recall the definition of $|g|$, where $g$ is some element of a free group, that was given in the paragraph following Definition \ref{GRdefin}.
\begin{defin}[Distance between metrics]\label{distmetr}
Let $F_n$ be a free group with $n$ generators coming from the set $\{f_1,\ldots,f_n\}$.

Let $d$ and $p$ be two bi-invariant metrics on $F_n$ and let $\varepsilon>0$ be a positive real. We say that $d$ and $p$ are $\varepsilon$-close, $\dist(d,p)\leq \varepsilon$, if for every $g\in F_n$ we have $\frac{|d(g,1)-p(g,1)|}{|g|}\leq \varepsilon$.

We may then put $\dist(d,p)=\inf\{\varepsilon:\dist(d,p)\leq \varepsilon\}$.
\end{defin}
Though not really important for our purposes, one can readily check that $\dist$ is indeed a metric.
\begin{lem}\label{rationalapprox}
Let $p$ be a finitely generated metric on $F_n$, $n\in \Nat$, and let $\varepsilon>0$. Then there exists a finitely generated rational metric $d$ on $F_n$ such that $\dist(p,d)\leq \varepsilon$.

The same holds if the metrics in consideration are bounded by some rational $K$.
\end{lem}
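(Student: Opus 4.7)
The plan is to produce $d$ by slightly perturbing each of the finitely many values $p(a,b)$ for $a,b\in A$ upward to a nearby rational and extending to $F_n$ via the min-sum formula of Definition~\ref{fingenmetric}. Concretely, I fix a small $\delta>0$ and choose a rational length function $\tilde{L}$ on $A\cdot A^{-1}$ with $\tilde{L}(c)\in[p(c,1),p(c,1)+\delta]$, $\tilde{L}(c^{-1})=\tilde{L}(c)$, and $\tilde{L}(1)=0$; I set $d(a,b)=\tilde{L}(ab^{-1})$ for $a,b\in A$, and then define $d$ on all of $F_n$ by the min-sum formula (truncated at $K$ in the bounded case). The resulting $d$ is automatically bi-invariant, rational-valued, and finitely generated by $A$.

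To verify the approximation, note that $d\geq p$ on $A\times A$ forces $d\geq p$ on all of $F_n$: the $d$-cost of any decomposition of $(g,1)$ dominates the corresponding $p$-cost and hence also $p(g,1)$. This also confirms that $d(g,1)>0$ for $g\neq 1$, so $d$ is genuinely a metric. For the upper estimate, feeding a shortest $p$-optimal decomposition of $(g,1)$---of length $m^{*}(g)$---into the $d$-min-sum formula yields $d(g,1)\leq p(g,1)+m^{*}(g)\,\delta$; altogether $0\leq d(g,1)-p(g,1)\leq m^{*}(g)\,\delta$.

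For the $K$-bounded statement, $\dist(d,p)\leq\varepsilon$ then follows by splitting on $|g|$: for $g$ with $|g|\geq K/\varepsilon$ one has $|d(g,1)-p(g,1)|\leq K\leq\varepsilon|g|$ automatically, and for the finitely many $g$ with $|g|<K/\varepsilon$ each $m^{*}(g)$ is a fixed finite integer, so $\delta$ can be chosen small enough to make $m^{*}(g)\,\delta\leq\varepsilon|g|$ throughout this finite set. The unbounded statement forfeits this finite-range reduction and instead requires a uniform linear bound $m^{*}(g)\leq C(A)\,|g|$; this is the hard part I expect to need to establish, presumably by normalising an optimal decomposition---eliminating $(1,1)$-pairs and absorbing cancelling neighbours using that $A$ algebraically generates $F_n$---so that its length becomes controlled by the word length of $g$ and the cardinality of $A$.
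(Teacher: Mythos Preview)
Your construction of $d$ and the basic inequalities $p\leq d$ and $d(g,1)\leq p(g,1)+m^{*}(g)\,\delta$ coincide with the paper's approach. Your bounded-case argument (splitting on whether $|g|\geq K/\varepsilon$) is correct and is in fact different from the paper, which treats bounded and unbounded cases uniformly via the linear bound.

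The gap is precisely the one you flag in the unbounded case: you isolate the needed bound $m^{*}(g)\leq C(A)\,|g|$ but do not prove it, and the normalisation strategy you sketch (absorbing cancelling neighbours, etc.) is both vague and more structural than necessary. The paper's argument is a one-line counting estimate that never inspects the shape of the decomposition. Set $L=\max_{i\leq n} p(f_i,1)$ and $l=\min\{p(a,b):a\neq b,\ a,b\in A\}>0$. If $p(g,1)=\sum_{i=1}^{m} p(a_i,b_i)$ is an optimal decomposition (terms with $a_i=b_i$ contribute $0$ to both the $p$- and $d$-sums and may be dropped), then each summand is at least $l$, so $m\cdot l\leq p(g,1)$. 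On the other hand, writing $g$ letter by letter as a product of its $|g|$ free generators gives the competing decomposition witnessing $p(g,1)\leq L\,|g|$. Combining, $m\leq (L/l)\,|g|$, so $C(A)=L/l$ works. With this constant fixed in advance you may simply take $\delta=\varepsilon\,l/L$ from the outset, and the unbounded (hence also the bounded) case follows immediately without any case split.
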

\begin{proof}
Let $A\subseteq F_n$ be a finite generating set for $p$. Recall that by $\{f_1,\ldots,f_n\}$ we denote the set of free generators of $F_n$, and set $L=\max\{p(f_i,1):i\leq n\}$,  $l=\min\{p(a,b):a\neq b, a,b\in A\}$ and finally $K=\frac{L}{l}$. For every $(a,b)\in A^2$ such that $a\neq b$, choose some rational number $d'(a,b)\in \Rat$ such that $d'(a,b)\in [p(a,b),p(a,b)+\varepsilon/K]$.

Then for every $a,b\in F_m$ we define $$d(a,b)=\min\{d'(a_1,b_1)+\ldots+d'(a_m,b_m):m\in \Nat, \forall i\leq m (a_i,b_i\in A),$$ $$a=a_1\cdot\ldots\cdot a_m,b=b_1\cdot\ldots\cdot b_m\}.$$ This is a finitely generated rational metric. We show that it is $\varepsilon$-close to $p$.

Clearly, $p\leq d$ so we must check that that for every $a\in F_n$ we have $\frac{d(a,1)-p(a,1)}{|a|}\leq \varepsilon$. Suppose that $p(a,1)=p(a_1,b_1)+\ldots+p(a_m,b_m)$, where for every $i\leq m$ we have $a_i,b_i\in A$ and $a=a_1\cdot\ldots\cdot a_m$, $1=b_1\cdot\ldots\cdot b_m$. We claim that $m\leq K\cdot |a|$. Otherwise, write $a$ as $h_1\cdot\ldots\cdot h_{|a|}$, where for $i\leq |a|$ we have $h_i=f_j^\varepsilon$, for some $j\leq n$ and $\varepsilon\in\{1,-1\}$, i.e. $h_i$ is a free generator of $F_n$ or its inverse. Then we would have that $p(h_1,1)+\ldots+p(h_{|a|},1)\leq L\cdot |a|<L\cdot m/K=l\cdot m\leq p(a_1,b_1)+\ldots+p(a_m,b_m)$, a contradiction.

Thus we get $d(a,1)\leq d(a_1,b_1)+\ldots+d(a_m,b_m)\leq p(a,1)+m\cdot \varepsilon/K\leq p(a,1)+ K\cdot |a|\cdot \varepsilon/K=p(a,1)+|a|\cdot \varepsilon$, and we are done.

The assertion about metrics that are bounded by $K$ is then obvious. Just take $\min\{K,d\}$.
\end{proof}
From now on, instead of an arbitrary bound $K$ we shall consider solely $1$-bounded metrics. We do that only for notational reasons to get rid of one variable. The reader can easily modify everything so that it works for any $K>0$.
\begin{defin}
Let $d$ be a bi-invariant $1$-bounded metric on $F_n$ generated by $\{f_1,\ldots,f_n\}$. Let $p$ be a finitely generated $1$-bounded metric on $F_n$ generated by the values of $d$ on $W_N(\{f_1,\ldots,f_n\})$, i.e. for any $a,b\in F_n$ we have $$p(a,b)=\min\{1,\min\{d(a_1,b_1)+\ldots+d(a_m,b_m):m\in \Nat,$$ $$a=a_1\cdot\ldots\cdot a_m,b=b_1\cdot\ldots\cdot b_m,\forall i\leq m (a_i,b_i\in W_N(\{f_1,\ldots,f_n\})\}\}.$$ Then we say that $p$ is an $N$-approximation of $d$. We have that for every $a,b\in W_N(\{f_1,\ldots,f_n\})$ $p(a,b)=d(a,b)$.
\end{defin}
\begin{lem}\label{approxlemma}
Let $d$ be an arbitrary bi-invariant $1$-bounded metric on $F_n$, for some $n$, where we denote the free generators as $\{f_1,\ldots,f_n\}$. Let $N_0\in \Nat$ and $\varepsilon>0$ be arbitrary. Then there exists $N\geq N_0$ such that the $N$-approximation $p$ of $d$ is $\varepsilon$-close to $d$.
\end{lem}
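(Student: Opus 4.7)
The plan is to pick $N$ large enough that two complementary regimes cover all $g\in F_n$: the ``short'' words $g$ with $|g|\leq N$, for which I want $p(g,1)=d(g,1)$ exactly, and the ``long'' words $|g|>N$, where I exploit the fact that both metrics are $1$-bounded. Concretely I would set $N=\max\{N_0,\lceil 1/\varepsilon\rceil\}$.

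For the short case, observe that if $|g|\leq N$, then $g\in W_N(\{f_1,\ldots,f_n\})$ and of course $1\in W_N$, so the one-term decomposition $a_1=g$, $b_1=1$ is admissible in the definition of $p(g,1)$; this already gives $p(g,1)\leq d(g,1)\leq 1$. For the reverse inequality, for any admissible decomposition $g=a_1\cdots a_m$, $1=b_1\cdots b_m$, iterated use of bi-invariance of $d$ yields $d(xa,yb)\leq d(x,y)+d(a,b)$ and hence, by induction on $m$,
\[
d(g,1)=d(a_1\cdots a_m,b_1\cdots b_m)\leq \sum_{i=1}^m d(a_i,b_i).
\]
Taking the infimum over decompositions and the minimum with $1$ gives $d(g,1)\leq p(g,1)$, so $p(g,1)=d(g,1)$ and the difference contributes nothing to $\dist(p,d)$.

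For the long case $|g|>N$, both $p(g,1)$ and $d(g,1)$ lie in $[0,1]$, so $|p(g,1)-d(g,1)|\leq 1$; since $|g|>N\geq 1/\varepsilon$, this yields $|p(g,1)-d(g,1)|/|g|\leq 1/N\leq \varepsilon$. Combining the two cases gives $\dist(p,d)\leq \varepsilon$, as required. The only non-routine step is the triangle-type inequality $d(g,1)\leq\sum d(a_i,b_i)$ used for short words, but this is a straightforward consequence of bi-invariance and is in any case already implicit in the definition of the finitely generated metrics used throughout the paper.
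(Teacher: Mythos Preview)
Your proof is correct and follows exactly the same approach as the paper: choose $N=\max\{N_0,\lceil 1/\varepsilon\rceil\}$, note that $p(g,1)=d(g,1)$ for $|g|\leq N$, and use $1$-boundedness for $|g|>N$. The only difference is that you spell out the verification of $p=d$ on $W_N$, which the paper records as part of the definition of the $N$-approximation.
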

\begin{proof}
Let $N=\max\{N_0,\lceil \frac{1}{\varepsilon} \rceil\}$ and let $p$ be the $N$-approximation of $d$. Then we claim that $p$ is $\varepsilon$-close to $d$. Let $g\in F_n$ be arbitrary. If $|g|\leq N$, then $d(g,1)=p(g,1)$, thus obviously $\frac{|d(g,1)-p(g,1)|}{|g|}\leq \varepsilon$. If $|g|>N$, then $\frac{|d(g,1)-p(g,1)|}{|g|}\leq \frac{1}{|g|}<\frac{1}{N}\leq \varepsilon$, and we are done.
\end{proof}
Fix a now a bi-invariant metric $d$ on $F_\infty$ (with free generators denoted by $(f_n)_n$). We inductively define a sequence $(\varepsilon^d_n)_{n\in\Nat}$ as follows. We set $\varepsilon^d_1=\min\{1/2,2 d(f_1,1)\}$ and for a general $n$, we set $\varepsilon^d_n=\min\{1/2^n,2d(f_n,1),\varepsilon^d_{n-1}\}$.

In the sequel, when the metric $d$ is clear from the context we shall write $(\varepsilon_n)_n$ instead of $(\varepsilon^d_n)_n$.
\begin{prop}\label{extprop}
Take any natural numbers $n\leq m$. Let $p_1$ be a finitely generated metric on $F_n$ and let $p_2$ be a finitely generated metric on $F_m$ such that $p_1\geq p_2\upharpoonright F_n$ and $p_1$ is $\delta$-close to $p_2\upharpoonright F_n$, for some $\delta>0$. Then there exists a finitely generated metric $p$ on $F'_n\ast F_m$, with free generators $\{f'_1,\ldots,f'_n,f_1,\ldots,f_n,\ldots,f_m\}$, such that $p\upharpoonright F'_n\cong p_1\upharpoonright F_n$, $p\upharpoonright F_m\cong p_2\upharpoonright F_m$ and for every $i\leq n$ we have $p(f'_i,f_i)=\delta$.\\
\end{prop}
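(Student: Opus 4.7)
The plan is to build $p$ as a finitely generated metric with a carefully chosen generating set, and to verify the three required properties. Choose finite symmetric generating sets $A_1 \supseteq \{1, f'_1, \ldots, f'_n\}$ for $p_1$ on $F'_n$ and $A_2 \supseteq \{1, f_1, \ldots, f_m\}$ for $p_2$ on $F_m$, and put $A = A_1 \cup A_2 \subseteq F'_n \ast F_m$. Define $d' \colon A \times A \to \Rea$ to be the shortest-path distance in the weighted graph on $A$ whose edges carry weight $p_1(a,b)$ for $a,b \in A_1$, weight $p_2(a,b)$ for $a,b \in A_2$, and weight $\delta$ between each $f'_i$ and $f_i$ (and between their inverses), for $i \leq n$. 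Let $p$ be the finitely generated bi-invariant metric on $F'_n \ast F_m$ determined by $(A, d')$ in the sense of Definition \ref{fingenmetric}. The upper bounds $p \upharpoonright F'_n \leq p_1$, $p \upharpoonright F_m \leq p_2$, and $p(f'_i, f_i) \leq \delta$ are immediate: any $p_i$-optimal $A_i$-decomposition remains a valid $A$-decomposition for $p$, and the length-one pair $(f'_i, f_i)$ has $d'$-cost $\delta$.

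The core task is the matching lower bound; I focus on $p \upharpoonright F'_n \geq p_1$, the other two cases being analogous. My approach is to introduce an auxiliary Slutsky-style bi-invariant metric $\underline{p}$ on $F'_n \ast F_m$, defined as the infimum of $\sum_i c_i$ over decompositions $a = \prod_i a_i$, $b = \prod_i b_i$ whose pairs $(a_i, b_i)$ are restricted to three types: (i) both in $F'_n$, with cost $p_1(a_i, b_i)$; (ii) both in $F_m$, with cost $p_2(a_i, b_i)$; or (iii) a $\delta$-bridge $(f'_j, f_j)$ or $((f'_j)^{-1}, f_j^{-1})$ (or reverse), with cost $\delta$, for $j \leq n$. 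Bi-invariance of $\underline{p}$ follows by decomposing any group element into an alternating word in $F'_n \cup F_m$ and prepending zero-cost pairs. Since each edge weight in the shortest-path definition of $d'$ is itself the cost of a one-pair Slutsky decomposition, summing along the shortest path and invoking the triangle inequality for $\underline{p}$ yields $\underline{p}(a_i, b_i) \leq d'(a_i, b_i)$, and summing over $i$ gives $\underline{p} \leq p$. Hence it suffices to prove $\underline{p} \upharpoonright F'_n \geq p_1$.

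This last inequality is the main obstacle, and is an analog of Theorem 5.10 of \cite{Slu}, with $\delta$-bridges playing the role of common-subgroup identifications. Its content is driven by the $\delta$-closeness hypothesis through the following computation: any \emph{excursion} of a Slutsky decomposition into $F_m$ — entering via $\delta$-bridges, traversing inside $F_m$ with total $p_2$-cost $C$, and exiting via $\delta$-bridges, using $N$ bridges in total — contributes $N\delta + C$ to the sum; under the canonical isomorphism $\alpha \colon F_n \to F'_n$ sending $f_j$ to $f'_j$, this excursion corresponds to a direct $F'_n$-path between elements of word length at most $N$, whose $p_1$-cost is bounded by $C + \delta N$ (combining $p_1 \geq p_2 \upharpoonright F_n$ with $\delta$-closeness applied to an element of length $\leq N$). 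Replacing each excursion by its $\alpha$-image in $F'_n$ thus produces a pure $p_1$-decomposition of $(a,b)$ of no greater total cost, which is bounded below by $p_1(a,b)$. The technical difficulty — mirroring Slutsky's van Kampen-style argument — is to organize an arbitrary Slutsky decomposition, in which pairs of the three types can be freely interleaved and the $F'_n$- and $F_m$-parts of $a,b$ only emerge after free-product cancellations, into a well-bracketed sequence of excursions, so that the replacement can be applied inductively; once this reorganization is in place, the excursion estimate above closes the argument.
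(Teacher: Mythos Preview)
Your construction of $p$ and the overall strategy coincide with the paper's: both take the finitely generated metric determined by $p_1$ on a generating set $A'\subseteq F'_n$, $p_2$ on a generating set $A\subseteq F_m$, and weight $\delta$ on the bridge pairs $((f'_j)^{\pm1},(f_j)^{\pm1})$, and both reduce the hard direction to a $\delta$-closeness estimate. The detour through an auxiliary Slutsky-style $\underline{p}$ is harmless but unnecessary; the paper works with $p$ directly.

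Where the two diverge is in the step you single out as the main technical difficulty. You anticipate a van~Kampen-style reorganization \`a la Slutsky to isolate ``excursions'' into $F_m$. The paper avoids this entirely with a short concrete manipulation: given a minimal decomposition of $(a,1)$ with $a\in F'_n$, replace each bridge pair $((f'_j)^\varepsilon,(f_j)^\varepsilon)$ or $((f_j)^\varepsilon,(f'_j)^\varepsilon)$ by the diagonal pair $((f_j)^\varepsilon,(f_j)^\varepsilon)$ of cost $0$, record the $K$ lost $\delta$'s, and then simply group consecutive pairs according to whether they lie in $(A')^2$ or in $A^2$, obtaining alternating blocks $(h_j,g_j)$ with $h_j,g_j\in F'_n$ for $j$ odd and $h_j,g_j\in F_m$ for $j$ even. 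One line of bi-invariance (Fact~\ref{trivfact}) gives $p(a,1)\geq \sum_j p_{1\text{ or }2}(h_j,g_j)+K\delta$, and then $\delta$-closeness is applied block by block. No nested-bracket induction is needed.

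Two further remarks. First, the paper pushes all blocks \emph{toward} $F_m$ via $h\mapsto\bar h$ and concludes with $p(a,1)\geq p_2(\bar a,1)+|a|\delta\geq p_1(a,1)$; your excursion replacement pulls toward $F'_n$ instead. Both directions are available from two-sided $\delta$-closeness, but the paper's choice interacts more cleanly with the standing hypothesis $p_1\geq p_2\upharpoonright F_n$. Second, your phrase ``the other two cases being analogous'' overstates the symmetry: $p\upharpoonright F_m\geq p_2$ is strictly easier (only $p_1\geq p_2$ is used, not $\delta$-closeness), and the paper treats it separately and first.
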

\begin{proof}
Let $\{f'_1,\ldots,f'_n\}$ denote the free generators of the free group of $n$ generators denoted here by $F'_n$. Let $A'\subseteq F'_n$ be the finite generating set for the metric $p_1$. Similarly, let $\{f_1,\ldots,f_m\}$ denote the free generators of the free group of $m\geq n$ generators denoted by $F_m$. Let $A\subseteq F_m$ be the finite generating set for the metric $p_2$. Consider the free product $F'_n\ast F_m$. We shall define a finitely generated metric on $F'_n\ast F_m$ with the desired properties. Consider the above mentioned sets $A',A$ as subsets of $F'_n\ast F_m$. Note that $A'\cap A=\{1\}$. For $x,y\in A'\cup A$ set 
$$p'(x,y)= \begin{cases} p_1(x,y) & \text{if } x,y\in A',\\
p_2(x,y) & \text{if } x,y\in A,\\
\delta & \text{if }x=(f'_j)^\varepsilon,y=(f_j)^\varepsilon\text{ or }x=(f_j)^\varepsilon,y=(f'_j)^\varepsilon,\\
\text{undefined} & \text{otherwise}.
\end{cases}
$$

Now let $a,b\in F'_n\ast F_m$ be arbitrary. We define $$p(a,b)=\min\{1,\min \{p'(a_1,b_1)+\ldots+p'(a_k,b_k):$$ $$\forall i\leq k (p'(a_i,b_i)\text{ is defined}),a=a_1\cdot \ldots\cdot a_k,b=b_1\cdot\ldots\cdot b_k\}\}.$$

This clearly defines a finitely generated metric (with $A'\cup A$ as a generating set). Note that for $x,y\in A'\cup A$ such that $p'(x,y)$ was undefined we have $$p(x,y)=\min\{p'(x,a)+p'(a,y):(x,a),(a,y)\in\mathrm{dom}(p')\}.$$ We need to check it satisfies the desired properties. It should be clear that for every $i\leq n$ indeed $p(f'_i,f_i)=\delta$. We shall check that for every $a,b\in F_n\subseteq F_m\subseteq F'_n\ast F_m$ we have $p(a,b)=p_2(a,b)$ and similarly that for every $a,b\in F'_n\subseteq F'_n\ast F_m$ we have $p(a,b)=p_1(a,b)$.

We shall use the following notation. For every element $c\in F'_n$, by $\bar{c}$ we shall denote the corresponding element from $F_n\leq F_m$, i.e. the image of $c$ under the map defined by sending each $f'_j$ to $f_j$, for every $j\leq n$. Similarly, for every $c\in F_n$, by $c'$ we shall denote the corresponding element from $F'_n$, i.e. the image of $c$ under the map defined by sending $f_j$ to $f'_j$, for every $j\leq n$.

We are ready to prove the two assertions above.\\

The former is easier. Let $a,b\in F_n$ be arbitrary. Suppose that $p(a,b)<p_2(a,b)$. Then there exists sequences $a_1,\ldots,a_k$ and $b_1,\ldots,b_k$ such that $a=a_1\cdot\ldots\cdot a_k$, $b=b_1\cdot\ldots\cdot b_k$ and for every $i\leq k$ either $(a_i,b_i)\in (A')^2$ or $(a_i,b_i)\in A^2$, or $a_i=(f'_j)^\varepsilon,b_i=(f_j)^\varepsilon$ or $a_i=(f_j)^\varepsilon,b_i=(f'_j)^\varepsilon$, for some $j\leq n$ and $\varepsilon\in \{-1,1\}$; moreover, we have $p(a,b)=p'(a_1,b_1)+\ldots+p'(a_k,b_k)$. However, replacing each $(a_i,b_i)\in (A')^2$ by $(\bar{a}_i,\bar{b}_i)\in A^2$ and every pair $((f_j)^\varepsilon,(f'_j)^\varepsilon)$ or $((f'_j)^\varepsilon,(f_j)^\varepsilon)$, for some $j\leq n$ and $\varepsilon\in \{-1,1\}$, by $(f^\varepsilon_j,f^\varepsilon_j)$ we get new sequences $c_1,\ldots,c_k,d_1,\ldots,d_k\in A$ such that $a=c_1\cdot\ldots\cdot c_k$, $b=d_1\cdot\ldots\cdot d_k$. Then we have that $$p(a,b)=p'(a_1,b_1)+\ldots+p'(a_k,b_k)\geq$$ $$p'(c_1,d_1)+\ldots+p'(c_k,d_k)=p_2(c_1,d_1)+\ldots+p_2(c_
k,d_k)\geq p_2(a,b)$$ and that is a contradiction.\\

We now prove the latter. Since the metrics are bi-invariant it suffices to check that for every $a\in F'_n$ we have $p_1(a,1)=p(a,1)$. Suppose for contradiction that for some $a\in F'_n$ we have $p(a,1)<p_1(a,1)$. So let $a_1,\ldots,a_k$ and $b_1,\ldots,b_k$ be such that $a=a_1\cdot\ldots\cdot a_k$, $1=b_1\cdot\ldots\cdot b_k$ and for every $i\leq k$ either $(a_i,b_i)\in (A')^2$ or $(a_i,b_i)\in A^2$, or $a_i=(f'_j)^\varepsilon,b_i=(f_j)^\varepsilon$ or $a_i=(f_j)^\varepsilon,b_i=(f'_j)^\varepsilon$, for some $j\leq n$ and $\varepsilon\in \{-1,1\}$. We shall show that $p'(a_1,b_1)+\ldots+p'(a_k,b_k)\geq p_1(a,1)$.

Consider a new pair of sequences $c_1,\ldots,c_k$, $d_1,\ldots,d_k$ where for any $i\leq k$
\begin{itemize}
\item $c_i=a_i$ and $d_i=b_i$ if $(a_i,b_i)\in (A')^2\cup A^2$,
\item $c_i=\bar{a}_i$ and $d_i=b_i$ if $a_i=(f'_j)^\varepsilon,b_i=(f_j)^\varepsilon$, for some $j\leq n$ and $\varepsilon\in \{-1,1\}$,
\item $c_i=a_i$ and $d_i=\bar{b}_i$ if $a_i=(f_j)^\varepsilon,b_i=(f'_j)^\varepsilon$, for some $j\leq n$ and $\varepsilon\in \{-1,1\}$.
\end{itemize}
Let $K$ be the cardinality of the set $\{i\leq k: c_i=\bar a_i\text{ or }d_i=\bar b_i\}$.

Let $c=c_1\cdot\ldots\cdot c_k$. Then $c$ can be written as $h_1\cdot h_2\cdot\ldots\cdot h_l$ where for $i$ odd we have $h_i\in F'_n$ and for $i$ even we have $h_i\in F_n$. This decomposition does not need to be unique since $1$ belongs to both $F'_n$ and $F_m$. This is fixed as follows: if, for some $i\leq k$, we have $c_i=1$, then $c_i$ is treated as an element of $F'_n$ if and only if $d_i\in F'_n$.

Correspondingly, we can then write $1=g_1\cdot\ldots\cdot g_l$, where for each $i\leq l$, if $h_i=c_{i_1}\cdot\ldots\cdot c_{i_j}$, then $g_i=d_{i_1}\cdot\ldots\cdot d_{i_j}$ and for $i$ odd we have $g_i\in F'_n$ and for $i$ even we have $g_i\in F_n$.

Since $p(a,1)=p'(a_1,b_1)+\ldots+p'(a_k,b_k)$ and using Fact \ref{trivfact} and the fact that $p((f'_j)^\varepsilon,(f_j)^\varepsilon)=p((f_j)^\varepsilon,(f'_j)^\varepsilon)=\delta$, we have (assuming without loss of generality that $l$ is even) that
\begin{multline}\label{eq1}
p(a,1)\geq p_1(h_1,g_1)+p_2(h_2,g_2)+\ldots+\\p_1(h_{l-1},g_{l-1})+p_2(h_l,g_l)+K\cdot \delta.
\end{multline}
On the other hand, since for every $i$ odd we have $p_2(\bar{h}_i,\bar{g}_i)\geq p_1(h_i,g_i)-|h_i|\cdot \delta$ and $|h_2|+|h_4|+\ldots+|h_l|=|a|-K$, we get
\begin{multline}\label{eq2}
p_1(h_1,g_1)+\ldots+p_2(h_l,g_l)\geq p_2(\bar{h}_1,\bar{g}_1)+\ldots+\\p_2(h_l,g_l)+(|a|-K)\delta\geq p(\bar{a},1)+(|a|-K)\delta.
\end{multline}
Thus it follows from \eqref{eq1} and \eqref{eq2} that $$p(a,1)\geq p(\bar{a},1)+|a|\delta=p_2(\bar{a},1)+|a|\delta\geq p_1(a,1),$$ and that is a contradiction.
\end{proof}

\subsection{The embedding construction}

\noindent Let $d$ be an arbitrary bi-invariant $1$-bounded metric on $F_\infty$. We consider the sequence $(\varepsilon_n)_n$ as in the previous proposition. By $(g_n)_n$ we denote the free generators of $F_\infty$.

Using Lemma \ref{approxlemma} and Lemma \ref{rationalapprox} we can find a rational finitely generated metric $\rho_1$ on $F_1$ ($\cong \Int$) that is $\varepsilon_1$-close to $d\upharpoonright F_1$. By Fact \ref{Gcharac} we can get the isometrically isomorphic copy denoted by $F^1_1$ in $G$ with a generator denoted by $f^1_1\in G$. Then, using Lemmas \ref{approxlemma} and \ref{rationalapprox} again, we can find a rational finitely generated metric $p_2$ on $F_2$ that is $\varepsilon_2$-close to $d\upharpoonright F_2$. In particular, $p_2\upharpoonright F_1$ is $\varepsilon_2$-close to $d\upharpoonright F_1$. Moreover, we may suppose that $\rho_1\geq p_2\upharpoonright F_1$. Using Proposition \ref{extprop} we can obtain a rational finitely generated metric $\rho_2$ on $F^1_1\ast F_2$ such that
\begin{itemize}
\item $\rho_2\upharpoonright F^1_1\cong \rho_1\upharpoonright F_1$,
\item $\rho_2\upharpoonright F_2\cong p_2\upharpoonright F_2$,
\item $\rho_2(f^1_1,g_1)=\varepsilon_1$.

\end{itemize}
Then we can use the extension property from Fact \ref{Gcharac} to extend $F^1_1\leq G$ to $F^1_1\ast F^2_2$ that is isometrically isomorphic to $F^1_1\ast F_2$, where the generators of $F^2_2$ are denoted by $f^2_1,f^2_2$.\\

Suppose we have produced $F^{n-1}_{n-1}\ast F^n_n\leq G$, with generators denoted by $f^{n-1}_1,\ldots,f^{n-1}_{n-1},f^n_1,\ldots,f^n_n$, such that $d_G\upharpoonright F^{n-1}_{n-1}$ is $\varepsilon_{n-1}$-close to $d\upharpoonright F_{n-1}$, $d_G\upharpoonright F^n_n$ is $\varepsilon_n$-close to $d\upharpoonright F_n$ and for every $i\leq n-1$ we have $d_G(f^{n-1}_i,f^n_i)=\varepsilon_{n-1}$.

Then as above, using Lemmas \ref{approxlemma} and \ref{rationalapprox} again, we can find a rational finitely generated metric $p_{n+1}$ on $F_{n+1}$ that is $\varepsilon_{n+1}$-close to $d\upharpoonright F_{n+1}$. In particular, $p_{n+1}\upharpoonright F_n$ is $\varepsilon_{n+1}$-close to $d\upharpoonright F_n$. Moreover, we may suppose that $p_n\geq p_{n+1}\upharpoonright F_n$. Using Proposition \ref{extprop} we can obtain a rational finitely generated metric $\rho_{n+1}$ on $F^n_n\ast F_{n+1}$ such that
\begin{itemize}
\item $\rho_{n+1}\upharpoonright F^n_n\cong p_n\upharpoonright F_n$,
\item $\rho_{n+1}\upharpoonright F_{n+1}\cong p_{n+1}\upharpoonright F_{n+1}$,
\item $\rho_{n+1}(f^n_i,g_i)=\varepsilon_n$ for every $i\leq n$.

\end{itemize}
Then we can use the extension property from Fact \ref{Gcharac} to extend $F^n_n\leq G$ to $F^n_n\ast F^{n+1}_{n+1}$ that is isometrically isomorphic to $F^1_1\ast F_{n+1}$, where the generators of $F^{n+1}_{n+1}$ are denoted by $f^{n+1}_1,f^{n+1}_{n+1}$.\\

When the induction construction is finished, we have obtained countably many Cauchy sequences $(f^n_1)_n, (f^n_2)_n, \ldots$. Indeed, since for every $n$, $\varepsilon_n\leq 1/2^n$, we have $\sum_n \varepsilon_n<\infty$.

Since $\mathbb{G}\supseteq G$, let us denote $f_i\in \mathbb{G}$ the limit of the sequence $(f^n_i)_n$, for every $i\in \Nat$. We claim that the subgroup of $\mathbb{G}$ generated by $\{f_i:i\in \Nat\}$ is isometrically isomorphic to $(F_\infty,d)$. We claim that the isometric isomorphism is the uniquely defined map $\phi$ which sends each $g_i\in F_\infty$ to $f_i\in \mathbb{G}$. Let $a\in F_\infty$ be arbitrary and let $g^{\delta_1} _{a(1)}\ldots g^{\delta_m} _{a(m)}\in W(\{g_i:i\in \Nat\})$ be the unique irreducible word corresponding to $a$, where $m$ is the length of $a$, i.e. for every $i\leq m$, $\delta_i\in \{1,-1\}$ and $a(i)\in \Nat$. Then $\phi(a)=f_{a(1)}^{\delta_1}\cdot\ldots\cdot f_{a(m)}^{\delta_m}$. It follows that we have $$|d_G(\phi(a),1)-d(a,1)|= \lim_{n\to \infty} |d_G((f_{a(1)}^n)^{\delta_1}\cdot\ldots\cdot (f_{a(m)}^n)^{\delta_m},1)-d(a,1)|\leq$$ $$\lim _{n\to \infty} m/2^n=0$$ and we are done.

\subsection{Group structure on the Urysohn sphere}
In \cite{Do2}, answering Vershik's question, we proved that there is a non-abelian group structure on the Urysohn space. It turns out that here we have the following.
\begin{thm}\label{isowithUry}
$\mathbb{G}_1$ is isometric to the Urysohn sphere.
\end{thm}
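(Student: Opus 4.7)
The plan is to invoke the Katetov characterization of the Urysohn sphere: a Polish metric space of diameter at most $1$ is isometric to $\Ur_1$ if and only if it has the \emph{one-point extension property}, meaning that for every finite $F \subseteq X$ and every $\phi : F \to [0,1]$ satisfying $|\phi(x) - \phi(y)| \leq d(x,y) \leq \phi(x) + \phi(y)$ for all $x, y \in F$, there is $z \in X$ with $d(z, x) = \phi(x)$ for each $x \in F$. Completeness, separability, and the bound on diameter of $\mathbb{G}_1$ are immediate from the construction, so only the extension property has to be verified.

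Given $F = \{h_1, \ldots, h_n\} \subseteq \mathbb{G}_1$ and a Katetov function $\phi$, I will obtain the required $z$ as the limit of a Cauchy sequence $(z^{(k)})_k$ in $G_1$, patterned on the embedding construction of Subsection 1.3. At step $k$ I first pick approximations $h_i^{(k)} \in G_1$ with $d(h_i, h_i^{(k)}) < 2^{-k}$, together with the previously built $z^{(k-1)}$, and locate a finitely generated subgroup $H^{(k)} \leq G_1$ containing all these elements and belonging (with its inherited metric) to $\Gr_1$; this is possible because $G_1$ is a direct limit of objects of $\Gr_1$.

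On the free product $H^{(k)} \ast \langle z \rangle$ I then want a bi-invariant metric extending $d_{H^{(k)}}$ that places $z$ approximately at distance $\phi(h_i)$ from $h_i^{(k)}$ and within $2^{-k+1}$ of $z^{(k-1)}$. To produce it, I fix these prescribed distances on the finite set $\{h_1^{(k)}, \ldots, h_n^{(k)}, z^{(k-1)}\}$, extend to all of $H^{(k)}$ by the Katetov infimum formula $\bar\phi(h) = \min_f (\phi(f) + d_{H^{(k)}}(h, f))$ (capped at $1$), and apply Theorem \ref{Graevext} to $H^{(k)} \sqcup \{z\}$ with these distances. Lemma \ref{rationalapprox} then rationalizes the resulting metric to a finitely generated one, placing the whole object into $\Gr_1$, and Fact \ref{Gcharac} embeds it into $G_1$ fixing $H^{(k)}$; I let $z^{(k)} \in G_1$ denote the image of $z$. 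The prescribed bound $d(z^{(k)}, z^{(k-1)}) < 2^{-k+1}$ makes $(z^{(k)})_k$ Cauchy, so its limit $z \in \mathbb{G}_1$ exists and satisfies $d(z, h_i) = \lim_k d(z^{(k)}, h_i^{(k)}) = \phi(h_i)$ by continuity of the metric.

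The main obstacle will be the simultaneous quantitative control at each step: the rationalization via Lemma \ref{rationalapprox} perturbs distances slightly, yet we need $d(z^{(k)}, h_i^{(k)})$ to converge \emph{exactly} to $\phi(h_i)$ while simultaneously maintaining $d(z^{(k)}, z^{(k-1)}) < 2^{-k+1}$ to preserve the Cauchy property. Moreover, the freshly prescribed distance $d(z, z^{(k-1)})$ must be Katetov-compatible with the distances already prescribed to the $h_i^{(k)}$ (relative to $d_{H^{(k)}}$), which restricts how small it can be chosen in terms of how much $z^{(k-1)}$ has already drifted. This bookkeeping of errors, though technical, proceeds in direct parallel with the error accounting of Subsection 1.3.
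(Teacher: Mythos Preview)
Your overall strategy---verifying the one-point Kat\v etov extension property for $\mathbb{G}_1$---is sound, but there is a real gap in the rationalization step, and the paper's route sidesteps the entire issue.

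The gap is this. After you apply Theorem \ref{Graevext} to obtain a bi-invariant metric on $H^{(k)}\ast\langle z\rangle$ extending $d_{H^{(k)}}$, you invoke Lemma \ref{rationalapprox} to ``rationalize the resulting metric to a finitely generated one''. But Lemma \ref{rationalapprox} perturbs \emph{all} values on the generating set, not just those involving $z$; the output metric need no longer restrict to $d_{H^{(k)}}$ on $H^{(k)}$. Once that restriction is lost, Fact \ref{Gcharac}(2) cannot be used to embed the new object into $G_1$ \emph{fixing} $H^{(k)}$, and your sequence $(z^{(k)})_k$ never gets off the ground inside $G_1$. One can try to repair this by perturbing only the pairs involving $z$ upward while freezing the values on $A_{H^{(k)}}$, and then arguing that the finitely generated metric so produced still restricts to $d_{H^{(k)}}$; but that argument is not Lemma \ref{rationalapprox}, and it is precisely the content of the external result the paper invokes (Theorem 2.1(2) of \cite{Do2}): given $F\in\Gr_1$ and a \emph{rational} Kat\v etov map on a finite subset of $F$, one can realize it in $F\ast\Int$ by the new generator so that the extended metric is again rational and finitely generated, i.e.\ $F\ast\Int\in\Gr_1$.

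The paper's proof is therefore much shorter and avoids all approximation bookkeeping: it never works in $\mathbb{G}_1$ at all. Instead it proves that the countable dense subgroup $G_1$ is isometric to the \emph{rational} Urysohn sphere $\Rat\Ur_1$, using the characterization in Fact \ref{charUry} with rational Kat\v etov maps. Given finite $A\subseteq G_1$ and a rational Kat\v etov map $f:A\to\Rat^+\cap[0,1]$, one finds $F\in\Gr_1$ with $A\subseteq F\leq G_1$, applies the cited result from \cite{Do2} to get $F\ast\Int\in\Gr_1$ with the generator realizing $f$, and then uses Fact \ref{Gcharac}(2) once. Taking completions gives $\mathbb{G}_1\cong\Ur_1$. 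No Cauchy sequences, no error control, no iteration. If you wish to salvage your approach, the cleanest fix is to restrict to finite $F\subseteq G_1$ and rational $\phi$ from the start, which collapses your construction to a single step---and that is exactly the paper's proof.
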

Recall that the Urysohn sphere $\Ur_1$ is the sphere of diameter $1$ around any point of the Urysohn space, i.e. if $x\in \Ur$ is some point in the Urysohn space then $\Ur_1=\{y:d_\Ur(x,y)=1/2\}$. It is characterized as the unique complete separable ultrahomogeneous metric space of diameter $1$. As with the Urysohn space one can at first construct the rational Urysohn sphere $\Rat\Ur_1$, the unique countable rational ultrahomogeneous metric space of diameter $1$, and then take the completion. We will prove that $G_1$ is isometric to $\Rat\Ur_1$. It will follow that $\mathbb{G}$ is isometric to $\Ur_1$.

We shall use another well-known characterization of $\Rat\Ur_1$. Recall that a Kat\v etov map $f:X\rightarrow \Rea^+$ on some metric space $X$ is a function satisfying for all $x,y\in X$ $|f(x)-f(y)|\leq d_X(x,y)\leq f(x)+f(y)$. The natural interpretation is that $f$ prescribes distances from some new point to points of $X$.
\begin{fact}[follows from Fact \ref{fraissethm} ]\label{charUry}
Let $X$ be a countable rational metric space of diameter $1$. Then $X$ is isometric to $\Rat\Ur_1$ iff for every Kat\v etov map $f:A\rightarrow \Rat^+\cap [0,1]$, where $A$ is a finite subset of $X$, there is a point $x_f\in X$ that realizes $f$, i.e. for every $a\in A$ we have $f(a)=d(x_f,a)$.
\end{fact}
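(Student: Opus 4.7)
The plan is to realize $\Rat\Ur_1$ as the Fra\" iss\' e limit of the class $\Age_1$ of all (isomorphism types of) finite rational metric spaces of diameter at most $1$, with isometric embeddings as morphisms, and then read off the characterization from Theorem \ref{fraissethm}. The class $\Age_1$ is countable, and amalgamation holds via the standard shortest-path amalgam: given $A \subseteq B, C \in \Age_1$, for $b \in B \setminus A$ and $c \in C \setminus A$ one sets $d(b,c) = \min\{1, \min_{a \in A}(d_B(b,a) + d_C(a,c))\}$, which produces a rational metric space of diameter $\leq 1$ amalgamating $B$ and $C$ over $A$. Theorem \ref{fraissethm} then provides a unique countable limit with the extension property (4), and it is easy to check that this limit coincides with $\Rat\Ur_1$ as defined in the excerpt (any countable rational ultrahomogeneous metric space of diameter $\leq 1$ containing every member of $\Age_1$ must be this limit).

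For the ``only if'' direction, assume $X \cong \Rat\Ur_1$ and let $f : A \to \Rat^+ \cap [0,1]$ be a Kat\v etov map on a finite $A \subseteq X$. Set $B := A \cup \{\ast\}$ with $d_B(a,a') = d_X(a,a')$ for $a,a' \in A$ and $d_B(\ast,a) = f(a)$. The Kat\v etov inequalities are precisely the statement that $d_B$ is a metric, and by construction it is rational with diameter $\leq 1$, so $B \in \Age_1$. Applying the extension property of Theorem \ref{fraissethm}(4) to the inclusions $A \hookrightarrow X$ and $A \hookrightarrow B$ yields an embedding $B \hookrightarrow X$ that fixes $A$ pointwise; the image of $\ast$ is the desired realizer $x_f$.

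For the ``if'' direction, suppose $X$ is a countable rational metric space of diameter $1$ satisfying the realization property. I build an isometry $\phi : X \to \Rat\Ur_1$ by a standard back-and-forth argument. Enumerate $X = \{x_n\}_{n \in \Nat}$ and $\Rat\Ur_1 = \{y_n\}_{n \in \Nat}$, and construct a chain of finite partial isometries $\phi_0 \subseteq \phi_1 \subseteq \ldots$ between $X$ and $\Rat\Ur_1$. At an odd step, to force $x_n$ into $\mathrm{dom}(\phi_{2n+1})$, define $g$ on the current image $\phi_{2n}(\mathrm{dom}(\phi_{2n})) \subseteq \Rat\Ur_1$ by $g(\phi_{2n}(a)) = d_X(x_n,a)$; this $g$ is a Kat\v etov map into $\Rat^+ \cap [0,1]$, so the realization property of $\Rat\Ur_1$ (the forward direction applied to $\Rat\Ur_1$ itself) furnishes the needed image point. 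The even ``back'' steps are symmetric, invoking the hypothesized realization property of $X$. The union $\phi := \bigcup_n \phi_n$ is the required isometric bijection. The only non-routine point in the whole argument is verifying amalgamation in $\Age_1$, but this is the classical bounded Urysohn amalgam; everything else is the standard back-and-forth that underlies the uniqueness of the Urysohn sphere.
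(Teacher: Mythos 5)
Your argument is correct and is essentially the route the paper intends: the paper states this Fact without proof as an instance of the Fra\"iss\'e theorem for the class of finite rational metric spaces of diameter at most $1$, and your write-up just fills in the standard details (truncated shortest-path amalgamation, extension property for the forward direction, back-and-forth for the converse). No gaps worth flagging beyond the trivial edge case $f(a)=0$, where the realizer is $a$ itself.
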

\begin{proof}[Proof of Theorem \ref{isowithUry}.]
We use the characterization from Fact \ref{charUry}. Let $A\subseteq G_1$ be finite and let $f:A\rightarrow \Rat^+\cap [0,1]$ be some Kat\v etov map. Then there exists finitely generated free group with finitely generated rational metric $F$, i.e. $F\in \Gr_1$, such that $A\subseteq F\leq G_1$. We now use a variant of Theorem \ref{Graevext} that we also proved in \cite{Do2}, namely Theorem 2.1 $(2)$ there. It says that we may realize $f$ in $F\ast \Int$ by the generator of the new copy of the integers so that the extended metric on $F\ast \Int$ is still rational and finitely generated, i.e. $F\ast \Int\in \Gr$. However, then by Fact \ref{Gcharac} $(2)$ we may suppose that this extension of $F$ to $F\ast \Int$ actually exists in $G_1$ and thus the generator of this new copy of the integers, denoted by $x_f$, belongs to $G_1$ and we are done by Fact \ref{charUry}.
\end{proof}
\subsection{Almost-universal disposition and genericity}
The goal of this section is to describe the almost-universal disposition property of $\mathbb{G}_1$ (or $\mathbb{G}_K$ for any other constant $K>0$ of course) that characterizes it up to isometric isomorphism. From that characterization, we will be able to prove that from the view of Baire category, $\mathbb{G}_1$ is a generic element.\\

We shall work just with $\mathbb{G}_1$ which we will simply denote by $\mathbb{G}$, and by $G$ we will denote its canonical countable dense subgroup.

Our first goal is to formulate the almost-universal disposition property of $\mathbb{G}$ that also gives a characterization of $\mathbb{G}$ up to isometric isomorphism. We need to introduce one more notion which was already implicitly used in Proposition \ref{extprop}.
\begin{defin}
Let $H$ be a group with bi-invariant metric $d_H$. Let $H_n$ and $H'_n$ both be subgroups of $H$ algebraically isomorphic to the free group of $n$ generators, for some $n\geq 1$, with free generators $h_1,\ldots,h_n$, resp.  $h'_1,\ldots,h'_n$. Then we write $\intdist_H (H_n,H'_n)\leq \varepsilon$ if for every $i\leq n$ we have $d_H(h_i,h'_i)\leq \varepsilon$.
\end{defin}
Note that then for every irreducible word $w$ over the alphabet of $2n$ elements (for the generators and their inverses), if we denote the realization of $w$ in $H_n$ by $v$ and in $H'_n$ by $v'$, then we have $d_H(v,v')\leq \varepsilon\cdot |w|$ by bi-invariance of $d_H$.

Notice also that the definition formally depends on the enumeration of the free generators in both groups. However, in the following theorem where this notion will be used the enumeration of generators will be always clear from the context; alternatively, the expression $\intdist_H (H_n,H'_n)\leq \varepsilon$ may be read that there \emph{exist} free generators $h_1,\ldots,h_n$, resp.  $h'_1,\ldots,h'_n$ such that for every $i\leq n$ we have $d_H(h_i,h'_i)\leq \varepsilon$. Also, we shall usually write $\intdist$ instead of $\intdist_H$ if $H$ is clear from the context.

The next theorem finally states the almost-universal disposition property that characterizes $\mathbb{G}$, and so puts $\mathbb{G}$ in line with objects such as the Urysohn space, the Gurarij space, etc. It will be also crucial later when proving that $\mathbb{G}$ is generic. The proof uses similar methods as the embedding construction of arbitrary free group with bi-invariant metric into $\mathbb{G}$, and is also similar to the proof of uniqueness of the Gurarij space from \cite{KuSo}.
\begin{thm}\label{homog_char}
Let $\mathbb{H}$ be a Polish metric group with bi-invariant metric $d_{\mathbb{H}}$ bounded by $1$. Then $\mathbb{H}$ is isometrically isomorphic to $\mathbb{G}$ if and only if:
\begin{enumerate}
\item free group of countably many generators, denoted by $H$, is dense in $\mathbb{H}$,
\item for every $\varepsilon>\varepsilon '>0$, for every $0\leq m<n$ and $(F_n,\rho)$, where $\rho$ is a bi-invariant metric bounded by $1$, and for every monomorphism $\iota: F_m\rightarrow \mathbb{H}$ such that $\dist ((F_m,\rho\upharpoonright F_m), (\iota [F_m],d_{\mathbb{H}}\upharpoonright \iota[F_m]))<\varepsilon$, there exists a monomorphism $\bar{\iota}: F_n\rightarrow H\subseteq \mathbb{H}$ such that $\intdist(\bar{\iota}(F_m),\iota[F_m])<\varepsilon$ and $\dist((F_n,\rho),(\bar{\iota}[F_n],d_{\mathbb{H}}\upharpoonright \bar{\iota}[F_n]))<\varepsilon '$.

\end{enumerate}
\end{thm}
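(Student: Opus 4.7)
The plan is to prove both implications separately.

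For the ``only if'' direction, property (1) holds because $\mathbb{G}$ was defined as the completion of the Fra\"iss\'e limit $G$, a free group on countably many generators. For property (2), given $\iota\colon F_m\to\mathbb{G}$ with $\dist((F_m,\rho\upharpoonright F_m),(\iota[F_m],d_\mathbb{G}\upharpoonright\iota[F_m]))<\varepsilon$, I would mimic the inductive embedding construction of Section 1.3. First, apply Lemmas \ref{approxlemma} and \ref{rationalapprox} to replace $\rho$ on $F_n$ by a rational finitely generated metric $p_n$ that is $\varepsilon'$-close to $\rho$. Second, using density of $G$ together with the fact that every finite subset of $G$ sits inside a finitely-generated-metric subgroup from $\Gr_1$ (Fact \ref{Gcharac}(1)), perturb the generators $\iota(g_i)$ to elements $h_1,\ldots,h_m\in G$ freely generating a rank-$m$ subgroup $F'_m\subseteq G$ carrying a rational finitely generated metric that remains close to $\rho\upharpoonright F_m$. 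Third, apply Proposition \ref{extprop} to the pair $(F'_m,F_n)$ with prescribed distance between corresponding generators small enough to force $\intdist<\varepsilon$, producing a rational finitely generated metric on $F'_m\ast F_n$. Finally, invoke the finite extension property of Fact \ref{Gcharac}(2) to realize this amalgam inside $G$, giving the required monomorphism $\bar\iota\colon F_n\to G\subseteq\mathbb{G}$.

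For the ``if'' direction, supposing $\mathbb{H}$ satisfies (1) and (2), I would show $\mathbb{H}\cong\mathbb{G}$ via a back-and-forth between the countable dense free subgroups $G\leq\mathbb{G}$ and $H\leq\mathbb{H}$. Enumerate the free generators on each side and fix summable tolerances $\varepsilon_k=2^{-k}$. Inductively maintain at stage $k$ a pair of monomorphisms $\phi_k\colon F_{n_k}\to G$ and $\psi_k\colon F_{n_k}\to H$ whose pushforward metrics satisfy $\dist(\phi_k,\psi_k)<\varepsilon_k$, with the first $\lfloor k/2\rfloor$ generators on each side approximated (to within $\varepsilon_k$) inside the corresponding image. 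To pass from stage $k$ to $k+1$, alternate sides: treat one of $\phi_k,\psi_k$ as the input $\iota$ to property (2), let $\rho$ be the metric transported from the other side (which is bi-invariant and $1$-bounded, so it satisfies the hypotheses), raise the rank to include the next enumerated generator, and obtain a new embedding whose generators deviate by less than $\varepsilon_{k+1}$ from the previous ones and whose pushforward metric is $\varepsilon'_{k+1}$-close to $\rho$ for some $\varepsilon'_{k+1}\ll\varepsilon_{k+2}$. Summability of $(\varepsilon_k)$ ensures that each matched generator traces a Cauchy sequence in the respective completion, and the induced map between free generators extends by uniform continuity to an isometric group isomorphism $\mathbb{G}\to\mathbb{H}$.

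The hardest part is the choreography of tolerances in the back-and-forth: at every step I must simultaneously (i) capture the next enumerated generator of the dense free subgroup on one side inside the extended rank-$n_{k+1}$ subgroup, (ii) keep the image inside the dense free subgroup $H$ or $G$ on the opposite side as (2) demands, and (iii) control the perturbation of the previously matched generators so that they remain Cauchy. The two-level tolerance structure of (2) must be sequenced so that the inner metric tolerance $\varepsilon'_k$ at each stage dominates the outer generator tolerance $\varepsilon_{k+1}$ at the next stage and both are summable across stages, simultaneously guaranteeing convergence of matched generators, exact agreement of metrics in the limit, and eventual coverage of all generators on both sides. A secondary technical obstacle, which is essentially absorbed into the ``only if'' direction, is ensuring that the perturbed copy $F'_m\subseteq G$ really carries a rational finitely generated metric so that Proposition \ref{extprop} applies; this is where the cofinality of finitely-generated-metric subgroups in $G$ matters.
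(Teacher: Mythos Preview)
Your proposal is correct and follows essentially the same line as the paper's proof. For the ``only if'' direction you invoke exactly the same tools in the same order: perturb $\iota[F_m]$ into a subgroup of $G$ generated by free generators (so that it lies in $\Gr_1$), approximate $\rho$ by a rational finitely generated metric via Lemmas \ref{approxlemma} and \ref{rationalapprox}, then glue via Proposition \ref{extprop} and realize inside $G$ using Fact \ref{Gcharac}(2). For the ``if'' direction both you and the paper run an approximate back-and-forth with summable tolerances $2^{-k}$; the only cosmetic difference is bookkeeping. The paper maintains explicit increasing chains $G_n\leq G$ and $H_n\leq H$ together with maps $\phi_n\colon G_n\to H_n$ and $\psi_n\colon H_n\to G_{n+1}$, adjoining the next enumerated free generator at each step, whereas you parametrize both sides by a common abstract $F_{n_k}$. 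The paper also records as a preliminary observation that the free generators of $H$ are dense in $\mathbb{H}$ (a quick consequence of condition (2) applied with $m=0$, $n=1$), which is what lets one approximate an arbitrary element by a free generator and thereby guarantee surjectivity of the limit map; you absorb this into the phrase ``eventual coverage of all generators on both sides,'' which is fine but worth making explicit when you write it out.
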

\begin{proof}
Let us first show that $\mathbb{G}$ satisfies the conditions of Theorem \ref{homog_char}. Obviously, the first condition is satisfied, so let us focus on the latter. Let $\varepsilon>\varepsilon '>0$, $0\leq m<n$, $(F_n,\rho)$, and a monomorphism $\iota: F_m\rightarrow \mathbb{G}$ as in the condition $(2)$ be given. Let $\varepsilon>\delta=\dist ((F_m,\rho\upharpoonright F_m), \iota [F_m])$. Since the free generators of $G$ are dense in $\mathbb{G}$ there exists $G'_m\leq G$ such that $G'_m\in \Gr$ and $\intdist(\iota [F_m], G'_m)<(\varepsilon-\delta)/2$. By bi-invariance, we also get that $\dist(\iota [F_m],G'_m)<(\varepsilon-\delta)/2$ and thus by triangle inequality that $\dist ((F_m,\rho\upharpoonright F_m), G'_m)<(\varepsilon+\delta)/2$. Then we use Lemmas \ref{approxlemma} and \ref{rationalapprox} to get a group $G_n\in \Gr$ algebraically isomorphic to $F_n$ such that $\dist(G_n,(F_n,\rho))<\varepsilon '$. Finally, using Proposition \ref{extprop} and Fact \ref{Gcharac} we can embed $G_n$ into $G$ via 
some isometric monomorphism $\bar{\iota}$ so that $\intdist(\bar{\iota}[G_m],G'_m)<(\varepsilon+\delta)/2$, where $G_m$ is the subgroup of $G_n$ generated by the `first $m$' free generators, and thus by triangle inequality $\intdist(\bar{\iota}[G_m],\iota[F_m])<\varepsilon$. This finishes the proof of one direction.\\

Now we must show conversely that any $\mathbb{H}$ which is a Polish metric group with bi-invariant metric $d_{\mathbb{H}}$ bounded by $1$ satisfying the conditions $(1)$ and $(2)$ is isometrically isomorphic to $\mathbb{G}$. Note that in $\mathbb{G}$ in fact the set of all free generators $(g_n)_n$ of $G$, the countable dense free subgroup, is dense. The same is true for $\mathbb{H}$. Indeed, denote by $(h_n)_n$ the set of free generators of $H$. Since $H$ is dense in $\mathbb{H}$, it suffices to show that $(h_n)_n$ is dense in $H$. Take some $h\in H$ and $\varepsilon>0$. Clearly, $h$ generates an infinite cyclic group $F_h$, i.e. a free group of one free generator $h$. By $(2)$, there is an embedding $\bar\iota :\Int\rightarrow H$ such that $\intdist(\bar\iota[\Int],F_h)<\varepsilon$, i.e. $d_{\mathbb{H}}(h,\bar\iota(1_\Int))<\varepsilon$ and $\bar\iota(1_\Int)$ is a free generator of $H$.

Now by induction we shall construct sequences of finitely generated free subgroups $(G_n)_n$ and $(H_n)_n$ of $\mathbb{G}$, resp. $\mathbb{H}$, of monomorphisms $\phi_n: G_n\rightarrow H_n$, $\psi_n: H_n\rightarrow G_{n+1}$, so that the following conditions are satisfied:\\
\begin{enumerate}[(a)]
\item $G_0=\{1_{\mathbb{G}}\}$, $H_0=\{1_H\}$,
\item for $\phi_n:G_n\rightarrow H_n$ we have that (if $n>0$) $\dist (G_n,\phi_n[G_n])<1/2^n$ and $\intdist (\phi_{n-1}[G_{n-1}],\phi_n[G_{n-1}])<1/2^{n-1}$,
\item for $\psi_n:H_n\rightarrow G_{n+1}$ we have that (if $n>0$) $\dist (H_n,\psi_n[H_n])<1/2^n$ and $\intdist (\psi_{n-1}[H_{n-1}],\psi_n[H_{n-1}])<1/2^{n-1}$,
\item for any $n$ and for any $x\in G_n$ and any $y\in H_n$ we have $d_{\mathbb{G}}(x,\psi_n\circ \phi_n(x))<|x|/2^n$ and $d_{\mathbb{H}}(y,\phi_{n+1}\circ \psi_n(y))<|y|/2^n$,
\item $G_n\subseteq G_{n+1}$ and $H_n\subseteq H_{n+1}$ for all $n$, and $\bigcup_n G_n=G$ and $\bigcup_n H_n=H$.\\

\end{enumerate}

We set $\phi_0$ and $\psi_0$ to be the trivial maps, i.e. sending the unit to the unit, and $G_1$ to be the free group generated by $g_1$.

Suppose now that $\phi_{n-1}, \psi_{n-1},G_n,H_{n-1}$ have been constructed. We use condition $(2)$ for $\mathbb{H}$ with $\varepsilon=1/2^{n-1}$, $\varepsilon '=1/2^n$, $(G_n,d_{\mathbb{G}}\upharpoonright G_n)$ instead of $(F_n,\rho)$, $\psi_{n-1}[H_{n-1}]$ instead of $F_m$ and $\psi^{-1}_{n-1}$ instead of $\iota$. By $\phi_n$ we denote the corresponding monomorphism going from $G_n$ to $H$ (denoted by $\bar{\iota}$ in $(2)$). Finally, we let $H_n$ be the finitely generated free group generated by $\phi_n[G_n]$, $H_{n-1}$ and the free generator $h_n\in H$. Note that $H_n$ is a finitely generated free subgroup of $H$, $H_{n-1}\subseteq H_n$ and $\phi_n: G_n\rightarrow H_n$. Similarly, using condition $(2)$ for for $\mathbb{G}$ with $\varepsilon=1/2^{n-1}$, $\varepsilon '=1/2^n$, $(H_n,d_{\mathbb{H}}\upharpoonright H_n)$ instead of $(F_n,\rho)$, $\phi_n[G_n]$ instead of $F_m$ and $\phi^{-1}_n$ instead of $\iota$, we obtain $\psi_n$ going from $H_n$ to $G$. Then we let $G_{n+1}$ be the finitely generated free 
group generated by $\psi_n[H_n]$, $G_n$ and the free generator $g_n\in G$.

We check that this is as desired. Items $(b),(c)$ follow immediately from the condition $(2)$. We check $(d)$. We shall do it for an arbitrary $n$ and for $y\in H_n$ such that $|y|=1$, i.e. $y$ is a generator. Generalization for $y\in H_n$ such that $|y|>1$ is immediate from bi-invariance from the metric. The case for $x\in G_n$ is analogous. Thus fix $n>0$ and $y\in H_n$ such that $|y|=1$. Looking back how we constructed $\phi_{n+1}$ we see that we have $\intdist (\psi_n^{-1}[\psi_n[H_n]],\phi_{n+1}[\psi_n[H_n]])=\intdist (H_n,\phi_{n+1} \circ \psi_n[H_n])<1/2^n$, which however precisely means that for any generator $h\in H_n$ we have $d_{\mathbb{H}}(h,\phi_{n+1}\circ \psi_n(h))<1/2^n$; in particular for $y$. Finally, $(e)$ follows from the fact that $G_n$, resp. $H_n$, contains $g_n$, resp. $h_n$, and $(g_n)_n$, resp. $(h_n)_n$, are the free generators of $G$, resp. $H$.\\

We are now ready to construct the isometric isomorphism, denoted by $\Phi$, between $\mathbb{G}$ and $\mathbb{H}$. It suffices to define $\Phi$ on the dense subgroup $G$. Take any $g\in G$. It follows from $(b)$, more precisely from the fact that for any $n$, $\intdist (\phi_{n-1}[G_{n-1}],\phi_n[G_{n-1}])<1/2^{n-1}$, that for any $n\geq n_0$, where $n_0$ is such that (the least) $g\in G_{n_0}$, we have $$d(\phi_n(g),\phi_{n+1}(g))<|g|/2^n$$ (again observe that for any generator $w$ we have $d(\phi_n(w),\phi_{n+1}(w))<1/2^n$ by definition of $\intdist$, thus if $g=w_1\cdot\ldots\cdot w_{|g|}$ for some generators or their inverses $w_1,\ldots,w_{|g|}$, then\\ $d(\phi_n(g),\phi_{n+1}(g))\leq d(\phi_n(w_1),\phi_{n+1}(w_1))+\ldots +d(\phi_n(w_{|g|}),\phi_{n+1}(w_{|g|}))<|g|/2^n$). In particular, we have that the sequence $(\phi_n(g))_{n\geq n_0}$ is Cauchy. We define $\Phi(g)$ to be its limit.

First of all, it is clear that $\Phi$ is a homomorphism since it is a limit of monomorphism, thus it preserves the group operations. Next, it is an isometry since by $(b)$ we have $\dist (G_n,\phi_n[G_n])<1/2^n$, thus for any $g\in G$ we have that $d_{\mathbb{G}}(g,1)=\lim_n d_{\mathbb{H}}(\phi_n(g),1)$. Finally, we must check that it is onto, resp. $\Phi[G]$ is dense. To see it, we define $\Psi:\mathbb{H}\rightarrow \mathbb{G}$ analogously as $\Phi$. For any $h\in H$ we define $\Psi(h)$ to be $\lim_n \psi_n(h)$. We again have that $\Psi:\mathbb{H}\rightarrow \mathbb{G}$ is an isometric monomorphism. We need to check that $\Psi\circ \Phi=\mathrm{id}_{\mathbb{G}}$, resp. $\Phi\circ \Psi=\mathrm{id}_{\mathbb{H}}$. Let us do the former, the latter is analogous. Again, it suffices to check it on any $g\in G$. So fix $g\in G$ and we check that $\Psi\circ \Phi(g)=g$. We have that $$\Psi(\Phi(g))=\lim_m \Psi (\phi_m(g))=\lim_m \lim_n \psi_n (\phi_m(g)).$$ For $m=n$ we have by $(d)$ that $d(g,\psi_m(\phi_m(g)))<|g|/
2^m$. By $(c)$, resp. by $\intdist (\psi_{n-1}[H_{n-1}],\psi_n[H_{n-1}])<1/2^{n-1}$, we have that\\ $d(\psi_m(\phi_m(g)),\lim_n \psi_n(\phi_m(g)))<|g|/2^{m-1}$. By triangle inequality, we thus have $$d(g,\lim_n \psi_n(\phi_m(g)))<|g|/2^{m-2}.$$ Taking the limit along $m$ we get that $$d(g,\lim_m \lim_n \psi_n (\phi_m(g)))=d(g,\Psi\circ \Phi (g))=0$$ which is what we were supposed to show. This finishes the proof.
\end{proof}
\begin{thm}\label{genericthm}
$\mathbb{G}$ is generic. More precisely, the set of all bi-invariant metrics (bounded by $1$) whose completion gives $\mathbb{G}$ is co-meager in the set of all bi-invariant metrics (bounded by $1$) on the free group of countably many generators.
\end{thm}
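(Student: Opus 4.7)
The plan is to topologize the space $M$ of bi-invariant $1$-bounded metrics on $F_\infty$ as a subspace of $[0,1]^{F_\infty\times F_\infty}$ with the product (pointwise convergence) topology. The axioms of a bi-invariant pseudometric bounded by $1$ are closed conditions, while the positivity $d(x,y)>0$ for $x\neq y$ is $G_\delta$, so $M$ is a Polish space; basic open neighborhoods are given by specifying finitely many pairs in $F_\infty\times F_\infty$ and rational intervals for the values of $d$ on them.

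Condition $(1)$ of Theorem \ref{homog_char} is automatic, since $F_\infty$ is itself a dense free subgroup of its metric completion. To secure condition $(2)$ generically, I let $\mathcal{T}$ be the countable set of tuples $\tau=(m,n,\bar a,\rho,\varepsilon,\varepsilon')$ with $0\le m<n$ in $\Nat$, $\bar a\in F_\infty^m$ freely generating a subgroup $F_{\bar a}\le F_\infty$, $\rho$ a rational finitely generated bi-invariant $1$-bounded metric on $F_n$ (with its first $m$ generators identified with those of $F_m$), and rationals $\varepsilon>\varepsilon'>0$. I set
\[U_\tau \;:=\; \bigl\{d\in M : \dist(\rho\upharpoonright F_m,\,d\upharpoonright F_{\bar a})>\varepsilon\bigr\} \,\cup\, V_\tau,\]
where $V_\tau$ consists of those $d\in M$ for which there is $\bar b=(b_1,\ldots,b_n)\in F_\infty^n$ freely generating a rank-$n$ subgroup $F_{\bar b}$, with $d(a_i,b_i)<\varepsilon$ for $i\le m$ and $\dist(\rho,\,d\upharpoonright F_{\bar b})<\varepsilon'$. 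Each $U_\tau$ is visibly open. A standard approximation argument---using density of $F_\infty$ in the completion to reduce an arbitrary $\iota\colon F_m\to\mathbb{H}$ to one into $F_\infty$, and Lemma \ref{rationalapprox} to reduce an arbitrary $\rho$ to a rational finitely generated one, absorbing the errors into slack in $\varepsilon,\varepsilon'$---shows that any $d\in\bigcap_\tau U_\tau$ satisfies condition $(2)$ of Theorem \ref{homog_char} in full generality, so its completion is isometric to $\mathbb{G}$.

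The heart of the proof is to show each $U_\tau$ is dense. Fix $\tau$, $d_0\in M$, and a basic open neighborhood $W$ of $d_0$ constraining the values of $d$ on a finite set $S\subseteq F_\infty\times F_\infty$. If $d_0$ already violates the hypothesis strictly, it lies in the first piece of $U_\tau$ and we are done. Otherwise, I perturb $d_0$ to $d\in W\cap V_\tau$ as follows. Since $S$ and $\bar a$ together involve only finitely many of the standard free generators $g_1,g_2,\ldots$ of $F_\infty$, I pick fresh generators $b_{m+1},\ldots,b_n$ from the standard basis disjoint from those, and set $b_i=a_i$ for $i\le m$, so that $\bar b$ freely generates a copy of $F_n$ inside $F_\infty$. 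Using Lemmas \ref{approxlemma} and \ref{rationalapprox}, I replace $\rho$ by a rational finitely generated $\tilde\rho$ arbitrarily close to $\rho$. Proposition \ref{extprop} then produces a finitely generated bi-invariant metric on an abstract free product $F_{\bar a}\ast F'_n$ that restricts to $d_0\upharpoonright F_{\bar a}$ on one factor, is isometric to $\tilde\rho$ on the other, and places the first $m$ generators of $F'_n$ at distance $<\varepsilon$ from the corresponding elements of $\bar a$. Amalgamating this metric with $d_0$ over $F_{\bar a}$ (via the Fra\"iss\'e amalgamation construction from the proof that $\Gr_K$ is a Fra\"iss\'e class, relying on Slutsky's theorem invoked there) extends $d_0$ to a bi-invariant metric on a finitely generated subgroup of $F_\infty$ containing all of $S$, $\bar a$, and $\bar b$; a final application of Theorem \ref{Graevext} in the style of Theorem \ref{free-dense}, followed by truncation at $1$, yields a bi-invariant $1$-bounded metric $d$ on all of $F_\infty$.

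The main obstacle is the bookkeeping around the preservation of $d_0$: one must verify that the perturbation lies in $W$, i.e., that the values of $d_0$ on pairs in $S$ are preserved exactly (or within the tolerance of $W$). This works because the fresh generators $b_{m+1},\ldots,b_n$ and the new copy $F'_n$ are chosen disjoint from the standard generators appearing in $S$; at each step (Proposition \ref{extprop}, the amalgamation over $F_{\bar a}$, and the Graev extension of Theorem \ref{Graevext}) the construction preserves the input metric on the relevant subgroup, which contains every element appearing in $S$. With each $U_\tau$ shown dense, $\bigcap_{\tau\in\mathcal{T}}U_\tau$ is a dense $G_\delta$, hence co-meager, and each $d$ in this intersection has completion isometrically isomorphic to $\mathbb{G}$, establishing genericity.
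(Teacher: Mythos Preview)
Your overall architecture—topologize the space of bi-invariant $1$-bounded metrics on $F_\infty$, express the characterizing property from Theorem \ref{homog_char}(2) as a countable family of open conditions $U_\tau$, and show each $U_\tau$ is dense—matches the paper's $G_\delta$ argument closely, and your reduction of the quantifiers to countable ones is right. The claim that each $U_\tau$ is open is also correct, though not quite ``visible'': the condition $\dist(\rho,d\upharpoonright F_{\bar b})<\varepsilon'$ is a priori an infinite conjunction, and one needs the observation behind Lemma \ref{approxlemma} (boundedness forces the supremum to be attained on words of length at most $\lceil 1/\varepsilon'\rceil$) to reduce it to finitely many open constraints.

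The density argument, however, has a genuine gap. You set $b_i=a_i$ for $i\le m$, so $F_{\bar a}\le F_{\bar b}$, and then require the new metric $d$ to satisfy $\dist(\rho,d\upharpoonright F_{\bar b})<\varepsilon'$; in particular $d\upharpoonright F_{\bar a}$ must be $\varepsilon'$-close to $\rho\upharpoonright F_m$. But your amalgamation over $F_{\bar a}$ is designed to \emph{preserve} $d_0$ on that subgroup, and $d_0\upharpoonright F_{\bar a}$ is only guaranteed to be $\varepsilon$-close to $\rho\upharpoonright F_m$ with $\varepsilon>\varepsilon'$. When $W$ constrains pairs in $F_{\bar a}$, these two requirements are incompatible. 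Relatedly, your invocation of Proposition \ref{extprop} is not justified: that proposition requires both input metrics to be finitely generated and, crucially, that $p_1\ge p_2\upharpoonright F_n$; neither hypothesis is verified for $p_1=d_0\upharpoonright F_{\bar a}$. The whole point of condition (2) is that the new generators $\bar\iota(f_i)$ are \emph{near} but not equal to the old ones, precisely so that the new copy of $F_n$ can carry a metric $\varepsilon'$-close to $\rho$ without disturbing the given metric near $\bar a$; your choice $b_i=a_i$ defeats this.

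The paper sidesteps all of this with a different, much shorter density argument: rather than perturbing $d_0$ by hand, it uses the already-established extension property of $\mathbb{G}$ (Theorem \ref{homog_char}(2), applied with trivial $F_m$) to find, inside the fixed dense subgroup $G\cong F_\infty$ of $\mathbb{G}$, a finitely generated free subgroup whose restricted metric is $\dist$-close to any prescribed $(F_m,d\upharpoonright F_m)$; transporting $d_G$ back along the isomorphism $G\cong F_\infty$ yields a metric in $D_{\mathbb{G}}$ close to $d$ on any finite set. This avoids Proposition \ref{extprop} and the amalgamation machinery entirely at this stage.
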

\begin{proof}
Let $D\subseteq [0,1]^{(F_\infty)^2}$ be the space of all bi-invariant metrics bounded by $1$ on $F_\infty$. Easy computation shows that $D$ is a $G_\delta$ subset of $[0,1]^{(F_\infty)^2}$, thus it is a Polish space. We show that $D_{\mathbb{G}}=\{\rho\in D: \overline{(F_\infty,\rho)}=\mathbb{G}\}$ is dense $G_\delta$.

Let us first show that $D_{\mathbb{G}}$ is dense. So let $d\in D$ be an arbitrary bi-invariant metric on $F_\infty$ bounded by $1$,  $A\subseteq (F_\infty)^2$ a finite subset and $\delta>0$ some positive real number. We have to show that there exists $\rho_d\in D_{\mathbb{G}}$ such that for every $(a_1,a_2)\in A$ we have that $|d(a_1,a_2)-\rho_d(a_1,a_2)|<\delta$. By bi-invariance, we may suppose that for every $(a_1,a_2)\in A$ we have that $a_2=1$. However, we just use property $(2)$ of $\mathbb{G}$ from Theorem \ref{homog_char}. Let $m$ be big enough so that $A$ is a subset of $F_m\leq F_\infty$. Let $n=\max\{|a|:(a,1)\in A\}$. Then by $(2)$ we can find a subgroup $G_m\leq G\leq \mathbb{G}$ algebraically isomorphic to $F_m$ such that $\dist((F_m,\rho\upharpoonright F_m),(G_m,d_G))<\delta/n$. Let $\phi:F_m\rightarrow G_m$ denote this algebraic isomorphism.  Then we have that for every $f\in F_m$ we have that $|\rho(f,1)-d(\phi(f),1)|<\delta/n\cdot |f|$. In particular for every $a$ such that $(a,1)\in A$ we 
have $$|\rho(a,1)-d(\phi(a),1)|<\delta/n\cdot |a|\leq \delta.$$ Since $G$ is isomorphic to $F_\infty$, we may thus without loss of generality assume that $\phi$ is the identity and then we are done.\\

It remains to check that $D_{\mathbb{G}}$ is a $G_\delta$ subset of $D$. Again by Theorem \ref{homog_char} $(2)$, it suffices to check that the subset of all those metrics $\rho$ from $D$ satisfying
$$\forall \varepsilon>\varepsilon'>0\,\forall 0\leq m<n\,\forall(F_n,p)\text{ and any monomorphism }\iota:F_m\rightarrow F_\infty$$ $$\text{ such that }\dist((F_m,p),(\iota[F_m],\rho))<\varepsilon\,\exists\text{ monomorphism }\bar{\iota}:F_n\rightarrow F_\infty$$ $$\text{ such that }\intdist(\iota[F_m],\bar{\iota}[F_m])<\varepsilon\text{ and }\dist((F_n,p),(\bar{\iota}[F_n],\rho))<\varepsilon'$$\\ is $G_\delta$. That is not difficult to check. First of all, observe that all the universal quantifiers in the above statement can be assume to quantify just over countable sets. It is clear for those quantifying over reals. Because of Lemmas \ref{rationalapprox} and \ref{approxlemma}, the quantifier over finitely generated free group with bi-invariant metric bounded by $1$ can be taken just over finitely generated free groups with finitely generated rational metric bounded by $1$, and there are of course only countably many of them. Finally, every monomorphism of $F_n$ into $F_\infty$ is determined on the set of 
generators, thus we also have only countably many of them. It follows that it suffices to check that for fixed $\varepsilon>\varepsilon '>0$, $0\leq m<n$, $(F_n,p)$ and monomorphisms $\iota:F_m\rightarrow F_\infty$ and $\bar{\iota}:F_n\rightarrow F_\infty$ we have that $\intdist(\iota[F_m],\bar{\iota}[F_m])<\varepsilon$ and $\dist((F_n,p),(\bar{\iota}[F_n],\rho))<\varepsilon '$ is an open set of metrics $\rho$. It is clear for $\intdist(\iota[F_m],\bar{\iota}[F_m])<\varepsilon$ since that is a condition verified just on generators, thus on a finite sets. However, the second condition $\dist((F_n,p),(\bar{\iota}[F_n],\rho))<\varepsilon '$ can also be checked only on a finite set. That follows from Lemma \ref{approxlemma} which says that for every $(F_n,p)$ there is some $N$ such that if some $\rho$ is close enough to $p$ on the finite set $\{f\in F_m: |f|\leq N\}$, then $\dist((F_n,p),(F_n,\rho))<\varepsilon '$. This finishes the proof.
\end{proof}
\section{Non-universality}
In this section we prove that there is no metrically universal separable group equipped with a bi-invariant metric. This is in contrast with our result in \cite{Do1} for abelian metric groups.

Let us state the theorem.
\begin{thm}\label{nonuniv}
For every separable metric group $(G,d_G)$, where $d_G$ is bi-invariant, there exists a locally compact separable metric group $(H,d_H)$, where $d_H$ is bi-invariant, such that $H$ is not isometrically isomorphic to any subgroup of $G$. Thus there is no metrically universal separable group with bi-invariant metric and no metrically universal locally compact separable group with bi-invariant metric.
\end{thm}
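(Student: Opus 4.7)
The plan is to use the ``distance between metrics'' $\dist$ of Definition \ref{distmetr} as a refined invariant: for a given separable bi-invariant $(G,d_G)$, I would associate to each $g \in G$ the bi-invariant metric on $\Int$ given by $\Phi(g)(n)=d_G(g^n,1)$, and then find a bi-invariant metric on $\Int$ outside the image of $\Phi$. First I would check that $\Phi\colon G\to\mathcal{N}(\Int)$, where $\mathcal{N}(\Int)$ denotes the space of bi-invariant metrics on $\Int$, is $1$-Lipschitz in $\dist$. The required estimate is $d_G(g^n,h^n)\le n\cdot d_G(g,h)$, which follows by writing $g^nh^{-n}=\prod_{i=0}^{n-1}g^i(gh^{-1})g^{-i}$ as a product of $n$ conjugates of $gh^{-1}$ and invoking bi-invariance; then $|\Phi(g)(n)-\Phi(h)(n)|/|n|\le d_G(g,h)$ and hence $\dist(\Phi(g),\Phi(h))\le d_G(g,h)$. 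Separability of $G$ therefore forces $\Phi(G)$ to be a separable subspace of $(\mathcal{N}(\Int),\dist)$.

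Next I would pick $d_*\in\mathcal{N}(\Int)\setminus\Phi(G)$ and take $H=(\Int,d_*)$: this is countable and discrete (hence locally compact) and carries a bi-invariant metric, and any isometric embedding $H\hookrightarrow G$ would send $1\in\Int$ to some $g\in G$ with $\Phi(g)=d_*\in\Phi(G)$, a contradiction. To ensure that $d_*$ exists one wants $(\mathcal{N}(\Int),\dist)$ to be non-separable when unbounded metrics are permitted --- in sharp contrast with the bounded regime, where Lemmas \ref{rationalapprox} and \ref{approxlemma} force $\dist$-separability, consistent with the existence of the universal object $\mathbb{G}_K$. Unboundedness therefore has to enter the argument essentially, which is why the diameter restriction in the main universality results cannot be dropped.

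The hard part will be constructing an uncountable uniformly $\dist$-separated family in $\mathcal{N}(\Int)$ (or, if that proves too rigid, in $\mathcal{N}(F_\infty)$, in which case one takes $H$ to be a countable free group equipped with the discrete topology). A first attempt is to use perturbations of the standard metric indexed by additive sub-monoids $T\subseteq\Nat$: for each such $T$ set $d_T(n,0)=|n|+\chi_{\Nat\setminus T}(|n|)$, which is subadditive because the sole potential obstruction --- $n,m\in T$ with $n+m\notin T$ --- is excluded by closure of $T$ under addition. Naively this yields only a Cantor-like, and so separable, family (pairs of sub-monoids agreeing on a long initial segment give small $\dist$), so one must either enrich the perturbation to distinguish pairs of sub-monoids at a fixed bounded scale, or switch the target to $F_\infty$: there Graev-type metrics $d_\alpha$ determined by $\|f_i\|_\alpha=\alpha_i\in\{1,2\}$ satisfy $\dist(d_\alpha,d_\beta)\ge 1$ for $\alpha\ne\beta\in\{1,2\}^{\Nat}$ directly from the generators, giving non-separability of $\mathcal{N}(F_\infty)$ for free. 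In the $F_\infty$ variant the remaining technical point is to show that the metrics in $\mathcal{N}(F_\infty)$ which are realisable as pullbacks from $G$ still form a $\dist$-separable subspace, which I would establish through the maps $G^n\to\mathcal{N}(F_n)$ (each with separable image, since $G^n$ with the sup metric is separable whenever $G$ is) together with a diagonal argument controlling the passage $n\to\infty$. In either form, producing such an $H$ yields the second (locally compact) assertion of the theorem, and the first assertion follows as an immediate corollary.
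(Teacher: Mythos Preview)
Your framework is right and is essentially the paper's: both use the $\dist$-metric of Definition \ref{distmetr}, observe that the pullback map $\Phi_n\colon (G^n,\sup)\to(\mathcal{N}(F_n),\dist)$ is $1$-Lipschitz (this is Claim \ref{claim1}), and then seek a metric outside its separable image. The gap is entirely in the ``hard part'' you flag, and neither of your proposed shortcuts works.

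The $\Int$ route is hopeless: $(\mathcal{N}(\Int),\dist)$ is separable. More generally, as the paper records in its concluding remarks (citing \cite{Do1}), $(\mathcal{D}_H,\dist_H)$ is separable for every finitely generated free \emph{abelian} $H$; this is precisely why a non-abelian free group is forced.

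The $F_\infty$ shortcut also fails. Your Graev-type metrics $d_\alpha$ with $\|f_i\|_\alpha=\alpha_i\in\{1,2\}$ do form a $1$-separated continuum in $\mathcal{N}(F_\infty)$, but the set of pullback metrics on $F_\infty$ from a separable $G$ is in general \emph{not} $\dist$-separable, so there is nothing to diagonalize out of. Concretely, let $G$ be the free group on $\{g_i,h_i:i\in\Nat\}$ with the Graev metric from the pointed space in which $d(g_i,1)=1$, $d(h_i,1)=2$, and all other distances are the sums of distances to $1$. This $G$ is separable, yet for every $\alpha\in\{1,2\}^{\Nat}$ the subgroup generated by $\{x_i\}_i$, where $x_i=g_i$ if $\alpha_i=1$ and $x_i=h_i$ otherwise, is isometrically isomorphic to $(F_\infty,d_\alpha)$ (Graev metrics restrict to Graev metrics on subgroups generated by subsets of the free generators, by the Ding--Gao formula). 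Thus all of your $d_\alpha$ embed simultaneously in this one separable $G$. The underlying obstruction is that $G^{\Nat}\to\mathcal{N}(F_\infty)$ is not continuous for the product topology, while $(G^{\Nat},\sup)$ is itself non-separable; no ``diagonal argument controlling $n\to\infty$'' can bridge this.

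What is actually required is a uniformly $\dist$-separated continuum of discrete bi-invariant metrics on $F_n$ for some \emph{finite} $n$, so that the domain $G^n$ stays separable. This is Proposition \ref{keylemma} (with $n=3$), and its verification---via an explicit family indexed by $2^{\Nat}$, analysed through Graev-metric quotients and the Ding--Gao match formula---is where almost all the work of Section 2 lies. You have correctly isolated the strategy but not the genuinely difficult combinatorial core.
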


Let us again start with some rough explanation of the ideas. We were able to show that bounded metrics are well-approximated by finitely generated metrics. This is no longer true for unbounded metrics which turns out to be the main reason for non-existence of a universal unbounded bi-invariant metric group. There are only countably many rational finitely generated bi-invariant metrics, however the density of the metric space of bi-invariant metrics on, say, $F_3$ is uncountable.

Recall Definition \ref{distmetr} of distances between metrics. The key tool in proving Theorem \ref{nonuniv} is the following.

\begin{prop}\label{keylemma}
There exists continuum-many discrete bi-invariant metrics on $F_3$, the free group of three generators, such that for some fixed constant $K>0$ each two of them are of distance greater than $K$.
\end{prop}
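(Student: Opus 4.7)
For each $\sigma \in 2^{\Nat}$ I will build a discrete bi-invariant $\Nat$-valued metric $d_\sigma$ on $F_3=F(a,b,c)$, and show that for distinct $\sigma,\tau$ the distance $\dist(d_\sigma,d_\tau)$ stays above a fixed positive constant $K$.

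The first task is to fix once and for all a sequence $(w_n)_{n\geq 1}$ of cyclically reduced, pairwise non-conjugate elements of $F_3$ with $|w_n|\to\infty$ and $|w_n|\geq 4$, satisfying the following \emph{independence property}: there is a constant $c\in(1/2,1]$ such that for every $n$ and every $T\subseteq \Nat$ with $n\notin T$, any factorization $w_n=g_1\cdots g_k$ in $F_3$ with each $g_i$ a conjugate of some $a^{\pm 1},b^{\pm 1},c^{\pm 1}$ or of some $w_m^{\pm 1}$ with $m\in T$ has $k\geq c|w_n|$. Assuming such $(w_n)$, for each $\sigma$ put $T_\sigma:=\sigma^{-1}(0)$ and define the conjugation-closed set
$$\bar{A}_\sigma\,:=\,\{\, u\gamma u^{-1} : u\in F_3,\ \gamma\in\{a^{\pm 1},b^{\pm 1},c^{\pm 1}\}\cup\{w_n^{\pm 1}:n\in T_\sigma\}\,\}.$$
Set $\ell_\sigma(1):=0$ and $\ell_\sigma(g):=\min\{k\geq 1:g=g_1\cdots g_k,\ g_i\in \bar{A}_\sigma\}$ for $g\neq 1$, and let $d_\sigma(g,h):=\ell_\sigma(g^{-1}h)$. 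Since $\bar{A}_\sigma$ is closed under inversion and conjugation, $\ell_\sigma$ is symmetric, subadditive, and constant on conjugacy classes; this gives bi-invariance of $d_\sigma$ and the triangle inequality. As $\ell_\sigma$ is $\Nat$-valued and vanishes only on the identity, $d_\sigma$ is a discrete bi-invariant metric.

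For the pairwise separation, fix $\sigma\neq\tau$ and any $n$ with $\sigma(n)\neq\tau(n)$; WLOG $\sigma(n)=1$ and $\tau(n)=0$. Then $w_n\in\bar{A}_\tau$ so $d_\tau(w_n,1)=1$, while $n\notin T_\sigma$, whence the independence property forces $d_\sigma(w_n,1)=\ell_\sigma(w_n)\geq c|w_n|$. Consequently
$$\frac{|d_\sigma(w_n,1)-d_\tau(w_n,1)|}{|w_n|}\ \geq\ c-\frac{1}{|w_n|}\ \geq\ c-\tfrac{1}{4},$$
uniformly in the choice of distinguishing index $n$, so that $\dist(d_\sigma,d_\tau)\geq K:=c-1/4>0$. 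The map $\sigma\mapsto d_\sigma$ is therefore injective on $2^{\Nat}$ and its image consists of continuum-many discrete bi-invariant metrics on $F_3$ pairwise at $\dist$-distance greater than $K$.

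The main obstacle is of course producing the sequence $(w_n)$ satisfying the independence property. My plan is an inductive construction: at stage $n$, having already chosen $w_1,\dots,w_{n-1}$, pick $w_n$ to be a cyclically reduced word in $F_3$ whose length $|w_n|$ is much larger than $\sum_{m<n}|w_m|$ and which satisfies a strong small-cancellation condition (say $C'(1/100)$) with respect to all cyclic conjugates of $w_1^{\pm 1},\dots,w_{n-1}^{\pm 1}$ and of the generators. A generic long word has this property, so the inductive choice is possible. For such $w_n$, an analysis based on van Kampen-type diagrams and the small-cancellation estimates on cancellation between adjacent conjugates in the product $g_1\cdots g_k$ then forces each factor to contribute a bounded fraction of $|w_n|$ to the reduced word, yielding the linear lower bound $k\geq c|w_n|$. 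This small-cancellation / cancellation-counting step is the combinatorial heart of the proof and is where the entire argument stands or falls.
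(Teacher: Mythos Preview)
Your strategy is different from the paper's, and it is plausible in outline, but the proposal has a genuine gap precisely where you yourself flag it: the ``independence property'' is asserted, not proved, and the sketch you give does not suffice. Two concrete issues. First, your inductive choice picks $w_n$ with small cancellation relative to $w_1,\dots,w_{n-1}$ only, yet the independence property is a statement about \emph{every} $T\subseteq\Nat\setminus\{n\}$, so in particular $T$ may contain indices $m>n$. You must argue separately why a factorization of $w_n$ using conjugates of much longer later words $w_m$ still needs at least $c|w_n|$ factors; this is not automatic and is not covered by the small-cancellation hypothesis you impose at stage $n$. Second, even for $T\subseteq\{1,\dots,n-1\}$, deducing a \emph{linear} lower bound $k\geq c|w_n|$ on the number of conjugate factors from $C'(1/100)$ is far from routine. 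Small cancellation controls van Kampen diagrams over the quotient $F_3/\langle\!\langle w_m:m\in T\rangle\!\rangle$, but you are working entirely inside the free group, and the passage from diagrammatic estimates to a linear lower bound on bi-invariant word length needs an actual argument (for instance via carefully chosen counting quasimorphisms, or a direct cancellation analysis); the sentence ``forces each factor to contribute a bounded fraction of $|w_n|$'' is not a proof and, as written, would give the wrong inequality. Note also that elements such as iterated commutators show that the ratio $\ell_\emptyset(g)/|g|$ can be small, so the linear bound genuinely depends on a careful choice of the $w_n$.

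For comparison, the paper's proof avoids these difficulties by being completely explicit: it takes $w_k=a^{2^k}b^{2^k}c^{2^k}$ and, rather than conjugacy-length metrics, defines each $d_x$ by prescribing the values $d_x(w_k,1)\in\{2\cdot 2^k,\,\tfrac{5}{2}\cdot 2^k\}$ according to $x(k)$ and then taking the maximal bi-invariant extension. All the work goes into verifying that these prescriptions are consistent, which is done via the Ding--Gao formula for Graev metrics together with a detailed match-combinatorics argument showing that no factorization of $a^nb^nc^n$ through the shorter $a^{n'}b^{n'}c^{n'}$ can force the value below $5n/2$. That argument is long but concrete and complete; your approach, if the independence property can be established with a uniform $c>1/4$, would be more conceptual, but as it stands the combinatorial heart is missing.
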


Suppose the proposition has been proved. We show how to finish the proof of Theorem \ref{nonuniv}. First we need the following claim.

\begin{claim}\label{claim1}
Let $d_0$ and $d_1$ be two bi-invariant metrics on $F_3$, where we denote the generators by $a$, $b$ and $c$, such that $\dist(d_0,d_1)\geq K$, where $K>0$. Suppose we have isometric embeddings $\iota_0: (F_3,d_0)\hookrightarrow (G,d_G)$ and $\iota_1: (F_3,d_1)\hookrightarrow (G,d_G)$, where $G$ is some group with bi-invariant metric $d_G$. Then we have $d_G(\iota_0(a),\iota_1(a))+d_G(\iota_0(b),\iota_1(b))+d_G(\iota_0(c),\iota_1(c))\geq K$.
\end{claim}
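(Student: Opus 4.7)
The plan is to extract a witnessing word $g\in F_3$ from the hypothesis $\dist(d_0,d_1)\geq K$, push it into $G$ via both embeddings, and exploit bi-invariance to bound its displacement in $G$ by a convex combination of the three generator displacements.

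More concretely, write $\alpha=d_G(\iota_0(a),\iota_1(a))$, $\beta=d_G(\iota_0(b),\iota_1(b))$, $\gamma=d_G(\iota_0(c),\iota_1(c))$. First I would fix an arbitrary $\varepsilon<K$. By Definition \ref{distmetr} and the definition of $\dist(d_0,d_1)$ as an infimum, there must exist some nontrivial $g\in F_3$ with
\[
|d_0(g,1)-d_1(g,1)|>\varepsilon\cdot|g|.
\]
Since $\iota_0,\iota_1$ are isometric embeddings, $d_0(g,1)=d_G(\iota_0(g),1)$ and $d_1(g,1)=d_G(\iota_1(g),1)$, so the reverse triangle inequality yields
\[
d_G(\iota_0(g),\iota_1(g))\geq|d_G(\iota_0(g),1)-d_G(\iota_1(g),1)|>\varepsilon\cdot|g|.
\]

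Next I would decompose the irreducible word for $g$ as $g=w_1\cdots w_n$ with $n=|g|$ and each $w_i\in\{a,b,c,a^{-1},b^{-1},c^{-1}\}$, and let $n_a,n_b,n_c$ count the occurrences of $a^{\pm1},b^{\pm1},c^{\pm1}$, so $n_a+n_b+n_c=n$. Because $\iota_0,\iota_1$ are homomorphisms and $d_G$ is bi-invariant (which in particular gives $d_G(x^{-1},y^{-1})=d_G(x,y)$), each factor $w_i$ contributes exactly $\alpha$, $\beta$ or $\gamma$ to the telescoping estimate
\[
d_G(\iota_0(g),\iota_1(g))\leq\sum_{i=1}^{n}d_G(\iota_0(w_i),\iota_1(w_i))=n_a\alpha+n_b\beta+n_c\gamma,
\]
using repeatedly that bi-invariance lets us replace $\iota_0(w_1\cdots w_i)\iota_0(w_{i+1})\iota_1(w_{i+2}\cdots w_n)$ by $\iota_0(w_1\cdots w_{i+1})\iota_1(w_{i+2}\cdots w_n)$ at a cost of $d_G(\iota_0(w_{i+1}),\iota_1(w_{i+1}))$.

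Combining the two displays gives $n_a\alpha+n_b\beta+n_c\gamma>\varepsilon\cdot n$, and dividing by $n$ yields
\[
\varepsilon<\tfrac{n_a}{n}\alpha+\tfrac{n_b}{n}\beta+\tfrac{n_c}{n}\gamma\leq\alpha+\beta+\gamma,
\]
where the last inequality holds simply because each weight $n_x/n$ lies in $[0,1]$ and the three terms are non-negative. Letting $\varepsilon\nearrow K$ delivers $\alpha+\beta+\gamma\geq K$, which is the claim. I do not anticipate a real obstacle here: the whole argument hinges on the slightly subtle but clean fact that after normalising by $|g|$ the right-hand side becomes a convex combination of $\alpha,\beta,\gamma$, so the length dependence cancels exactly as needed.
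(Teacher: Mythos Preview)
Your proof is correct and follows essentially the same route as the paper's: extract a witnessing word from $\dist(d_0,d_1)\geq K$, telescope $d_G(\iota_0(g),\iota_1(g))$ letter-by-letter using bi-invariance, and compare with the reverse triangle inequality. The only cosmetic differences are that the paper argues by contradiction and bounds the telescoped sum more crudely by $|w|\cdot K$, whereas you track the letter counts $n_a,n_b,n_c$ and pass to the limit $\varepsilon\nearrow K$; your handling of the infimum in Definition~\ref{distmetr} is in fact slightly more careful than the paper's.
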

\begin{proof}[Proof of the claim.]
Suppose for contradiction that $d_G(\iota_0(a),\iota_1(a))+d_G(\iota_0(b),\iota_1(b))+d_G(\iota_0(c),\iota_1(c))< K$. Since $\dist(d_0,d_1)\geq K$ there exists a word $w\in W(\{a,b,c\})$ such that $\frac{|d_0(w,1)-d_1(w,1)|}{|w|}\geq K$. We have $d_G(\iota_0(w),\iota_1(w))<|w|\cdot K$. That follows from the assumption on distances between $\iota_0(x)$ and $\iota_1(x)$, for $x\in \{a,b,c\}$, and bi-invariance of $d_G$. Then by triangle inequality we get $$|d_G(\iota_0(w),1)-d_G(\iota_1(w),1)|\leq d_G(\iota_0(w),\iota_1(w))<|w|\cdot K,$$ which is a contradiction.
\end{proof}

Now let $(d_\alpha)_{\alpha<\mathfrak{c}}$ be the collection of continuum-many discrete bi-invariant metrics that we got from Proposition \ref{keylemma}. Denote again the generators of $F_3$ by $a$, $b$ and $c$. Suppose that there exists a universal separable group $\mathbb{G}$ with bi-invariant metric $d$. Then for each $\alpha<\mathfrak{c}$ there exists an isometric embedding $\iota_\alpha: (F_3,d_\alpha)\hookrightarrow (\mathbb{G},d)$. Applying Claim \ref{claim1} we get that for any $\alpha\neq \beta <\mathfrak{c}$ we have $d(\iota_\alpha(a),\iota_\beta(a))+d(\iota_\alpha(b),\iota_\beta(b))+d(\iota_\alpha(c),\iota_\beta(c))\geq K$. However, this is not possible since $\mathbb{G}$ is separable. Since the metrics $(d_\alpha)_{\alpha<\mathfrak{c}}$ are discrete, the same argument works for $\mathbb{G}$ locally compact.  This contradiction finishes the proof of Theorem \ref{nonuniv}, so it remains to prove Proposition \ref{keylemma}.\\

For this, we need to generalize the notion of finitely generated metric. Whenever $G$ is a group and $d':G^2\rightarrow \Rea$ is a partial bi-invariant metric on a subset $A\subseteq G^2$ then we may take the greatest (partial) bi-invariant metric that extends $d'$. That is the content of the next definition.
\begin{defin}\label{partialbi}
Let $G$ be a group and $A\subseteq G$ an arbitrary symmetric subset containing $1$ that algebraically generates $G$. We say that a function $d':A^2\rightarrow \Rea$ is a partial bi-invariant metric if
\begin{itemize}
\item $d'(a,a)=0$ for every $a\in A$,
\item $d'(a,b)=d'(b,a)=d'(a^{-1},b^{-1})$ for every $a,b\in A$,
\item for every $a,b\in A$ and every $a_1,b_1,\ldots,a_n,b_n\in A$ such that $a=a_1\cdot\ldots\cdot a_n$, $b=b_1\cdot\ldots\cdot b_n$ we have $d'(a,b)\leq d'(a_1,b_1)+\ldots+d'(a_n,b_n)$.

\end{itemize}

Then we may always extend $d'$ to the bi-invariant metric on the whole $G$ as follows. For every pair $(a,b)\in G^2$ we set $$d(a,b)=\inf\{d'(a_1,b_1)+\ldots+d'(a_n,b_n):$$ $$(a_1,b_1),\ldots,(a_n,b_n)\in A,a=a_1\cdot\ldots\cdot a_n,b=b_1\cdot\ldots\cdot b_n\}.$$
\end{defin}
This is the greatest bi-invariant metric extending $d'$.

\begin{proof}[Proof of Proposition \ref{keylemma}]
Let us denote the three generators of $F_3$ by $a$, $b$ and $c$. Let $A\subseteq F_3$ be a symmetrization of the following subset: $\{1,a,b,c\}\cup\{a^n\cdot b^n\cdot c^n:n\in \{2^k:k\in\Nat\}\}$. For every $x\in 2^\Nat$ we define a partial bi-invariant metric $d'_x:A\rightarrow \Rea$. Then by $d_x$ we shall denote the extension from Definition \ref{partialbi} on the whole $F_3$. We shall guarantee that the distance between $d'_x$ and $d'_y$, for any $x\neq y\in 2^\Nat$, is greater than $1/6$. Then also the distance between the extensions $d_x$ and $d_y$, for any $x\neq y\in 2^\Nat$, will be greater than $1/6$, and we will be done.\\

Let $x\in 2^\Nat$ be arbitrary. We define $d'_x:A\rightarrow \Rea$ as follows:
\begin{itemize}
\item $d'_x(v^\varepsilon,1)=1$ for every $v\in \{a,b,c\}$ and $\varepsilon\in \{1,-1\}$,
\item $d'_x((a^n\cdot b^n\cdot c^n)^\varepsilon,1)=\begin{cases} 2n & \text{if }x(k)=0\\3n-n/2 & \text{if }x(k)=1
\end{cases}$\\ where $k\in \Nat$, $n=2^k$ and $\varepsilon\in \{1,-1\}$,
\item for $u\neq v\in A\setminus\{1\}$ we define $d'_x(u,v)=d'_x(u,1)+d'_x(v,1)$.
\end{itemize}
On the remaining pairs from $A$ the definition of $d'_x$ is clear from the requirements that it is $0$ on the diagonal and symmetric.

Suppose for a moment that we have already checked that $d'_x$ is indeed a partial bi-invariant metric for every $x\in 2^\Nat$. We show that for $x\neq y\in 2^\Nat$, $\dist(d'_x,d'_y)\geq 1/6$. Let $x\neq y\in 2^\Nat$ be given. Let $k\in \Nat$ be such that $x(k)\neq y(k)$, say $x(k)=0$, $y(k)=1$. Then, for $n=2^k$, we have that $d'_x(a^n\cdot b^n\cdot c^n,1)=2n$, while $d'_y(a^n\cdot b^n\cdot c^n,1)=3n-n/2$. Since $|a^n\cdot b^n\cdot c^n|=3n$ we have $\dist(d'_x,d'_y)\geq \frac{|2n-3n+n/2|}{3n}=1/6$. Thus it remains to verify that for every $x\in 2^\Nat$, $d'_x$ is a partial bi-invariant metric and the extension $d_x$ is a discrete metric. However, the latter will follow easily as $\inf\{d_x(v,w):v\neq w\in F_3\}=\min\{d'_x(v,w):v\neq w\in A\}>0$.\\

Let $x\in 2^\Nat$ be given and from now on fixed. It is sufficient to check that for any $n\in \{2^k:k\in\Nat\}$ we have $$\min\{d'_x(u_1,v_1)+\ldots+d'_x(u_m,v_m):u_1,v_1,\ldots,u_m,v_m\in A,$$ $$\forall i\leq m (u_i\neq a^n\cdot b^n\cdot c^n\wedge v_i\neq a^n\cdot b^n\cdot c^n),u_1\cdot\ldots\cdot u_m=a^n\cdot b^n\cdot c^n,$$ $$v_1\cdot \ldots\cdot v_m=1\}\geq 3n-n/2.$$
In other words, there is nothing that forces the distance $d'_x(a^n\cdot b^n\cdot c^n,1)$ to be `small'; so no matter what the values $\{x(k'):k'<\log_2^n\}$ are we can always consistently put $d'_x(a^n\cdot b^n\cdot c^n,1)$ to be $3n-n/2$.

From now on such $n\in \{2^k:k\in\Nat\}$ is fixed. Note that we can suppose that for no $i\leq m$, $u_i$ or $v_i$ from the decomposition above are equal to $(a^{n'}\cdot b^{n'}\cdot c^{n'})^\varepsilon$, where $n'\in \{2^k:k\in\Nat\}$ and $n'>n$ and $\varepsilon\in \{-1,1\}$. This follows from the fact that $d'_x(a^{n'}\cdot b^{n'}\cdot c^{n'},1)\geq 2n'>3n-n/2$. Let $d_{x\upharpoonright n}$ be a finitely generated metric generated by the values of $d'_x$ on the symmetrization of the set $\{1,a,b,c\}\cup \{a^i\cdot b^i\cdot c^i:i\in \{2^k:k\in\Nat\}\wedge i<n\}$ which we shall denote by $A_n$. It follows from the discussion above that it suffices to check that $d_{x\upharpoonright n} (a^n\cdot b^n\cdot c^n,1)\geq 3n-n/2$.

In order to prove it, we use Lemma \ref{factorlem} and a result of Ding and Gao from \cite{DiGa} on computing the Graev metric. We refer the reader to Section 3 from \cite{DiGa} or Section 2.6 from the book \cite{Gao} for more details on techniques used below or for any unexplained (not sufficiently explained) notion.

We briefly recall that for a pointed metric space $(X,1,d_X)$ there is a metric $\delta$ on the free group $F(X)$, called the Graev metric, with generators from the metric space $X$ with the distinguished point $1$ as unit, which is bi-invariant and extends $d_X$. A match (on a set $\{1,\ldots,k\}$) is a function $\theta:\{1,\ldots,k\}\rightarrow \{1,\ldots,k\}$ satisfying $\theta\circ \theta=\mathrm{id}$ and for no $1\leq i<j\leq k$ we have $i<j<\theta(i)<\theta(j)$. The following result of Ding and Gao connects matches with trivial words. We recall that a word $w_1\ldots w_i$ over some alphabet $X\cup X^{-1}\cup\{1\}$, where $X$ is some set, is called \emph{trivial} if the corresponding group element $w'$ in the free group $F(X)$ is $1$.
\begin{lem}[Ding, Gao (Lemma 3.5 in \cite{DiGa})]\label{matchlemma}
For any trivial word $w=w_1\ldots w_i$ over some alphabet there is a match $\theta:\{1,\ldots,i\}\rightarrow \{1,\ldots,i\}$ such that for every $j\leq i$ we have $w_{\theta(j)}=w^{-1}_j$.
\end{lem}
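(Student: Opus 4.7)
The plan is induction on the word length $i$. The base case $i=0$ is immediate, with $\theta$ the empty map. For the inductive step I would split on whether the word uses the identity letter $1$.

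\emph{Case A (some $w_j = 1$).} Then $w_j^{-1} = 1 = w_j$, so I set $\theta(j) = j$. Fixed points are allowed since a match only requires $\theta\circ\theta = \mathrm{id}$, and no crossing quadruple $p < q < \theta(p) < \theta(q)$ can involve $j$ (that would force $j$ to sit strictly above or below itself). Deleting $w_j$ yields a shorter word representing the same group element, hence still trivial; the induction hypothesis applied to it, followed by an order-preserving reindexing, supplies the remainder of $\theta$.

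\emph{Case B (no letter equals $1$).} Here I first claim that some adjacent pair must already satisfy $w_{j+1} = w_j^{-1}$: otherwise the word $w$ would be freely reduced and non-empty, hence would represent a non-trivial element of the free group, contradicting triviality. I set $\theta(j) = j+1$ and $\theta(j+1) = j$, then apply induction to the shorter trivial word $w_1 \ldots w_{j-1} w_{j+2} \ldots w_i$. The combined match $\theta$ stays non-crossing because the pair $\{j,j+1\}$ is adjacent: any candidate crossing quadruple involving one of these two indices would require a third index strictly between $j$ and $j+1$, of which there is none; all other candidate quadruples lie entirely in the domain of the inductively produced match and inherit non-crossingness from the hypothesis.

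The main obstacle I anticipate is the reducibility observation in Case B — the elementary but crucial statement that a non-empty trivial word over an alphabet $X \cup X^{-1}$ (with no $1$-letters) must contain a literally adjacent inverse pair. I would justify it by the uniqueness of the reduced form in free groups: the iterative free reduction of $w$ must terminate with the empty word, and the very first cancellation step can only act on a pair of letters that are already adjacent in $w$ itself. Once this combinatorial point is granted, the rest is careful bookkeeping — checking that deletion preserves triviality, that fixed points cannot generate crossings, and that the inductive match extends consistently after reindexing — none of which involves any genuine difficulty.
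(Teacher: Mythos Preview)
Your induction argument is correct and is essentially the standard proof of this combinatorial fact. Note, however, that the paper itself does not prove this lemma: it is quoted verbatim as Lemma~3.5 of Ding--Gao \cite{DiGa} and used as a black box, so there is no ``paper's own proof'' to compare against. Your Case~A/Case~B split with the adjacent-cancellation observation is exactly how one establishes it, and your verification that neither a fixed point nor an adjacent transposition can participate in a crossing quadruple is sound.
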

For a word $w=w_1\ldots w_k$ over an alphabet $X\cup X^{-1}\cup\{1\}$ by $w^\theta$ we denote the word $w^\theta_1\ldots w^\theta_k$, where $w_i^\theta=\begin{cases} w_i & \text{if }\theta(i)<i\\
w^{-1}_{\theta(i)} & \text{if }\theta(i)>i\\
1 & \text{if }\theta(i)=i\\
\end{cases}$

We now state the Ding-Gao's result. By $\rho$ in the statement we denote the extension of $d_X$ from $X\cup\{1\}$ to $X\cup X^{-1}\cup\{1\}$ which satisfies
\begin{itemize}
\item for any $x,y\in X$ we have $\rho(x^{-1},y^{-1})=\rho(x,y)=d_X(x,y)$,
\item for any $x,y\in X$ we have $\rho(x,y^{-1})=\rho(y^{-1},x)=d_X(x,1)+d_X(1,y)$,

\end{itemize}
\begin{thm}[Ding, Gao \cite{DiGa}]\label{DingGao}
Let $(X,1,d_X)$ be a pointed metric space and $\delta$ the corresponding Graev metric on the free group $F(X)$. Then for any irreducible word $w=w_1\ldots w_i\in W(X)$ we have $$\delta(w,1)=\min\{\rho(w_1,w_1^\theta)+\ldots+\rho(w_i,w^\theta_i):\theta\text{ is a match}\}.$$
Alternatively, one can say that there exists a trivial word, namely $w^\theta$ for some match $\theta:\{1,\ldots,i\}\rightarrow \{1,\ldots,i\}$, such that $$\delta(w,1)=\rho(w_1,w_1^\theta)+\ldots+\rho(w_i,w_i^\theta).$$
\end{thm}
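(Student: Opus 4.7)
The proof splits into the two inequalities $\delta(w,1)\leq \min_\theta \sum_j \rho(w_j,w_j^\theta)$ and $\delta(w,1)\geq \min_\theta \sum_j \rho(w_j,w_j^\theta)$, where $\theta$ ranges over matches on $\{1,\ldots,i\}$. For the $(\leq)$ direction I would fix a match $\theta$ and first verify that the formal word $w^\theta=w_1^\theta\cdots w_i^\theta$ equals $1$ in $F(X)$: the non-crossing property of $\theta$ forces the existence of an innermost index $k$ with either $\theta(k)=k$ (so $w_k^\theta=1$) or $\theta(k)=k+1$ (so $w_k^\theta w_{k+1}^\theta = w_{k+1}^{-1}w_{k+1}=1$), and removing that index leaves a match on a shorter sequence, completing an induction on $i$. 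With $w^\theta=1$ in $F(X)$ the pair of decompositions $w=w_1\cdots w_i$ and $1=w_1^\theta\cdots w_i^\theta$ is admissible in the finitely-generated definition of the Graev metric, giving $\delta(w,1)\leq \sum_j \rho(w_j,w_j^\theta)$; minimizing over $\theta$ finishes this half.

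For the $(\geq)$ direction, take any decomposition $w=a_1\cdots a_n$, $1=b_1\cdots b_n$ with $a_k,b_k\in X\cup X^{-1}\cup\{1\}$ witnessing the infimum defining $\delta(w,1)$ up to $\varepsilon$, and build a match $\theta$ on $\{1,\ldots,i\}$ with $\sum_j \rho(w_j,w_j^\theta)\leq \sum_k \rho(a_k,b_k)$. Two auxiliary matchings on $\{1,\ldots,n\}$ feed the construction. First, Lemma \ref{matchlemma} applied to the trivial word $b_1\cdots b_n$ yields a non-crossing match $\sigma$ with $b_{\sigma(k)}=b_k^{-1}$. Second, the reduction of $a_1\cdots a_n$ to the irreducible $w_1\cdots w_i$ selects a survival set $S=\{s_1<\cdots<s_i\}$ with $a_{s_j}=w_j$ together with a nested match $\tau$ on $\{1,\ldots,n\}\setminus S$ recording the cancellations, so $a_{\tau(k)}=a_k^{-1}$. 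I would define $\theta$ via alternating walks in the pair $(\sigma,\tau)$: starting at $s_j$, alternately apply $\sigma$ and $\tau$ until a $\sigma$-step returns to $S$ at some $s_{j'}$, and set $\theta(j)=j'$. Because each position determines the next step uniquely and both maps are involutions, the walks partition $\{1,\ldots,n\}$ into paired paths; the non-crossing of $\sigma$ and of $\tau$ together prevent any two $\theta$-pairs from crossing.

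The cost bound reduces to a telescoping argument along each walk $s_j=t_0,t_1,\ldots,t_{2m+1}=s_{j'}$: at every $\sigma$-step one has $b_{t_{l+1}}=b_{t_l}^{-1}$ and at every $\tau$-step $a_{t_{l+1}}=a_{t_l}^{-1}$, so iterated triangle inequalities together with the symmetry $\rho(x,y)=\rho(x^{-1},y^{-1})$ yield $\rho(a_{t_0},a_{t_{2m+1}}^{-1})\leq \sum_{l=0}^{2m+1}\rho(a_{t_l},b_{t_l})$; when $j<j'$ the left-hand side equals $\rho(w_j,w_j^\theta)$. Fixed points of $\theta$ arise precisely when $\sigma(s_j)=s_j$, forcing $b_{s_j}=b_{s_j}^{-1}$ and hence $b_{s_j}=1$, in which case $\rho(w_j,1)=d_X(w_j,1)=\rho(a_{s_j},b_{s_j})$. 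Summing over $\theta$-pairs and fixed points accounts for every index of $\{1,\ldots,n\}$ exactly once, producing the desired inequality $\sum_j \rho(w_j,w_j^\theta)\leq \sum_k \rho(a_k,b_k)$. The main obstacle will be the rigorous verification that $\theta$ really is non-crossing and the careful treatment of edge cases where some $a_k$ or $b_k$ equals $1$ (these appear as fixed points of $\tau$ or $\sigma$ and may lead to one-step degenerate walks); I would isolate these by first normalizing the decomposition, dropping zero-cost pairs $(1,1)$ and splitting mixed pairs $(a_k,1)$ or $(1,b_k)$ via the explicit formula of $\rho$ on pairs involving $1$, so that the alternating-walk argument can proceed on a decomposition in $(X\cup X^{-1})^2$ without further ambiguity.
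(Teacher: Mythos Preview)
The paper does not prove this statement: it is quoted as a result of Ding and Gao (with a remark attributing the formula earlier to Uspenskij), and no argument is given in the paper itself. There is therefore nothing in the paper to compare your proposal against.

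For what it is worth, your outline is the standard route to this formula. The $(\leq)$ direction is immediate once one checks that $w^\theta$ represents $1$, and your induction on innermost pairs is the right way to see that. For $(\geq)$, combining a match $\sigma$ on the trivial word $b_1\cdots b_n$ (via Lemma~\ref{matchlemma}) with the cancellation pairing $\tau$ coming from the reduction of $a_1\cdots a_n$ to $w$, and then threading alternating $\sigma$/$\tau$ walks to produce a match on the surviving indices, is exactly how the Ding--Gao argument proceeds. The points you single out as delicate---termination of the walks in $S$, non-crossing of the induced $\theta$, and the treatment of letters equal to $1$---are precisely the places where the published proof spends its effort, and your proposed normalization (eliminating $(1,1)$ pairs and splitting mixed pairs using the explicit formula for $\rho$) is a clean way to reduce to the nondegenerate case.
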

\begin{remark}
We note (as we were informed by the referee) that the formula for computing the Graev metric was first obtained by Uspenskij in \cite{Us3} and was already implicitly present in \cite{SiUs}.
\end{remark}
Using Lemma \ref{factorlem}, we realize that $(F_3,d_{x\upharpoonright n})$ is the quotient of the free group $F(A'_n)$ over the set $A'_n=\{1\}\cup\{a,b,c\}\cup \{a^{n'} b^{n'}c^{n'}:n'\in\{2^k:k\in\Nat\},n'<n\}$, with the Graev metric $\delta$, by the subgroup $N$ taken with the factor metric. $N$ is the kernel of the map $p:F(A'_n)\rightarrow F_3$, determined by the identity map from $A'_n\subseteq F(A'_n)$ to $A'_n\subseteq F_3$, in which $a\cdot\ldots\cdot b\cdot\ldots\cdot c$ is identified with $a^{n'}b^{n'}c^{n'}$ for all appropriate $n'$. Note that the Graev metric $\delta$ here is the finitely generated metric generated by the values of $d'_{x\upharpoonright n}$ on $A_n$, where $d'_{x\upharpoonright n}$ is the restriction of $d'_x$ to $A_n$. We remark that similar (though less general) quotients in the category of Banach spaces are considered in \cite{BY}, Theorem 2.4, where a Lipschitz-free Banach space over a metric space having a partial linear structure is quotiented by a subspace `generated by 
that linear structure'.

Then combining that with Theorem \ref{DingGao} we get that there exists an \underline{irreducible} word $\bar{w}=\bar{w}_1\ldots \bar{w}_j$ over the alphabet $A_n=\{a,b,c\}\cup \{a^{n'} b^{n'}c^{n'}:n'\in\{2^k:k\in\Nat\},n'<n\}\cup \{a,b,c\}^{-1}\cup \{a^{n'} b^{n'}c^{n'}:n'\in\{2^k:k\in\Nat\},n'<n\}^{-1}$ and a trivial word $\bar{v}=\bar{v}_1\ldots \bar{v}_j$ over the same alphabet such that $\bar{w}'=a^n\cdot b^n\cdot c^n$, where by $\bar{w}'$ we denote the group element corresponding to the word $\bar{w}$ in the quotient $F(A'_n)/N=F_3=F(\{a,b,c\})$, and we have
\begin{equation}\label{DGsum}
d_{x\upharpoonright n}(a^n\cdot b^n\cdot c^n,1)=d'_{x\upharpoonright n}(\bar{w}_1,\bar{v}_1)+\ldots+d'_{x\upharpoonright n}(\bar{w}_j,\bar{v}_j).
\end{equation}

We shall work with certain modifications of words $\bar{w}$ and $\bar{v}$, denoted by $w=w_1\ldots w_i$ and $v=v_1\ldots v_i$, such that for each $j\leq i$ we have that the pair $(w_j,v_j)$ is one of the following:
\begin{enumerate}
\item $(x^\varepsilon,x^\varepsilon)$, where $x\in \{a,b,c\}$ and $\varepsilon\in \{1,-1\}$,
\item $(x^\varepsilon,1)$ or $(1,x^\varepsilon)$, where $x\in \{a,b,c\}$ and $\varepsilon\in \{1,-1\}$,
\item $((a^{n'} b^{n'} c^{n'})^\varepsilon,1)$, where $n'=2^k$ for some $k$, $n'<n$ and $\varepsilon\in \{1,-1\}$.

\end{enumerate}
and moreover, the sum $d'_{x\upharpoonright n}(w_1,v_1)+\ldots+d'_{x\upharpoonright n}(w_i,v_i)$ is equal to the sum $d'_{x\upharpoonright n}(\bar{w}_1,\bar{v}_1)+\ldots+d'_{x\upharpoonright n}(\bar{w}_j,\bar{v}_j)$.

The words $w$ and $v$ are obtained from $\bar{w}$, resp. $\bar{v}$ as follows: Each pair $(x,y)$, where $x\neq y\in A_n$, is replaced by two pairs $(x,1)$ and $(1,y)$ without changing the sum \eqref{DGsum}. The pair $((a^{n'} b^{n'} c^{n'})^\varepsilon,(a^{n'} b^{n'} c^{n'})^\varepsilon)$, where $n'=2^k$ for some $k$, $n'<n$ and $\varepsilon\in (1,-1)$, is replaced by $3n'$ pairs $(a,a),\ldots,(b,b),\ldots,(c,c)$, resp. $(c^{-1},c^{-1}),\ldots,(b^{-1},b^{-1}),\ldots,(a^{-1},a^{-1})$ again without changing the sum \eqref{DGsum}. The pair $(1,(a^{n'} b^{n'} c^{n'})^\varepsilon)$, where $n'=2^k$ for some $k$, $n'<n$ and $\varepsilon\in \{1,-1\}$, is replaced by $3n'+1$ pairs $((a^{n'} b^{n'} c^{n'})^{-\varepsilon},1)$ and then $(a,a),\ldots,(b,b),\ldots,(c,c)$ or\\ $(c^{-1},c^{-1}),\ldots,(b^{-1},b^{-1}),\ldots,(a^{-1},a^{-1})$ again without changing the sum \eqref{DGsum}.

Next, associate to the word $w$ the canonical corresponding word $u=u_1\ldots u_l$ over the alphabet $\{1,a,b,c\}\cup \{a,b,c\}^{-1}$; i.e. each letter $w_j$, for $j\leq i$, which is of the form $(a^{n'} b^{n'} c^{n'})^\varepsilon$ is replaced by the corresponding word from the alphabet $\{1,a,b,c\}\cup \{a,b,c\}^{-1}$. In particular, we have that $u'$, the corresponding group element in $F_3$, is equal to $a^n\cdot b^n\cdot c^n$. Thus there exists a subsequence $l_0=1\leq l_1<l_2<\ldots l_{3n}\leq l=l_{3n+1}$, where
\begin{itemize}
\item $u_{l_j}=a$ for $1\leq j\leq n$, $u_{l_j}=b$ for $n+1\leq j\leq 2n$, $u_{l_j}=c$ for $2n+1\leq j\leq 3n$,
\item for every $0\leq j\leq 3n$, $u_{l_j+1}\ldots u_{l_{j+1}-1}$ is a trivial word, i.e. the corresponding group element is $1$.

\end{itemize}
It follows (using Lemma \ref{matchlemma}) that for each $j\leq 3n$ there exists a partial match $\eta_j:\{l_j+1,\ldots,l_{j+1}-1\}\rightarrow \{l_j+1,\ldots,l_{j+1}-1\}$ that is a match for the trivial subword $u_{l_j+1}\ldots u_{l_{j+1}-1}$ in $u$. By $\eta:\{1,\ldots,l\}\rightarrow \{1,\ldots,l\}$ we denote the union $\bigcup_j \eta_j\cup \{(l_j,l_j):j\leq 3n\}$.

Similarly, by $t=t_1\ldots t_l$ we denote the trivial word of the same length as $u$ that corresponds to $v$. Note that already $v$ was a word over the alphabet $\{1,a,b,c\}\cup \{a,b,c\}^{-1}$, so $t$ is only a lengthening; i.e. for some $j\leq i$, if $v_j\in \{a,b,c\}\cup\{a,b,c\}^{-1}$, then the corresponding letter also appears in $t$, and if $v_j=1$ then $t$ contains $|w_j|$-many $1$'s at that place, where $|w_j|$ is the length of $w_j$ as a word over the alphabet $\{1,a,b,c\}\cup \{a,b,c\}^{-1}$.

So since $t$ is trivial, let us also denote by $\theta:\{1,\ldots,l\}\rightarrow \{1,\ldots,l\}$ a match for $t$. Also, for each $j\leq l$ let $\phi(j)$ be the index such that $u_j$ `comes from' the letter $w_{\phi(j)}$ (each $w_j$ can be considered as a subword of $u$). Note that for every $j\leq 3n$ either $w_{\phi(l_j)}$ is the same letter as $u_{l_j}$, or $w_{\phi(l_j)}$ is the letter $a^{n'}b^{n'}c^{n'}$, for some $n'$, from $A_n$.

We shall now define
\begin{enumerate}
\item a one-to-one map $\tau:\{l_1,\ldots,l_{3n}\}\rightarrow \tau[\{l_1,\ldots,l_{3n}\}]$,
\item a `cost function' $\pi:\{l_1,\ldots,l_{3n}\}\rightarrow \Rea$ which will help us estimate the value $d_{x\upharpoonright n}(a^n\cdot b^n\cdot c^n,1)$ in the sense that we will have $$\sum_{j=1}^{3n} \pi(l_j)\leq d'_{x\upharpoonright}(w_1,v_1)+\ldots+d'_{x\upharpoonright n}(w_i,v_i),$$
\item a function $\alpha:\{l_1,\ldots,l_{3n}\}\rightarrow \{0,1,\ldots,i\}$ whose meaning will be explained during the definition.

\end{enumerate}

For every $j\leq 3n$, if $v_{l_j}=1$ then 
\begin{itemize}
\item we set $\tau(l_j)=l_j$,
\item if $w_{\phi(l_j)}$ is the same letter as $u_{l_j}$, i.e. $w_{\phi(l_j)}\in \{a,b,c\}$, then we set $\alpha(l_j)=0$; in that case, the sum \eqref{DGsum} contains as a summand $d'_{x\upharpoonright n}(w_{\phi(l_j)},1)=d'_{x\upharpoonright n}(u_{l_j},1)=1$, so we set `the cost' $\pi(l_j)=1$ to reflect this;
\item if on the other hand $w_{\phi(l_j)}$ is of the form $a^{n'}b^{n'}c^{n'}$, for some $n'$, then we set $\alpha(l_j)=\phi(l_j)$ to denote that the letter $u_{l_j}$ is part of $a^{n'}b^{n'}c^{n'}=w_{\phi(l_j)}$; the cost is estimated as follows: let $C=|\{j'\leq 3n: \alpha(l_{j'})=\alpha(l_j)\}|$ and set $$\pi(l_j)=\frac{d'_{x\upharpoonright n}(w_{\phi(l_j)},1)} {C}=\frac{d'_{x\upharpoonright n}(a^{n'}b^{n'}c^{n'},1)}{C}.$$

\end{itemize}

If on the other hand we have that $v_{l_j}\neq 1$, then we necessarily have that $v_{l_j}=u_{l_j}$. Then using the match $\theta$ we have that $v_{\theta(l_j)}=v^{-1}_{l_j}$ and so also $u_{\theta(l_j)}=u^{-1}_{l_j}$. Then we use the match $\eta$ to get $u_{\eta\circ\theta(l_j)}$.
\begin{claim}
The letter $u_{\eta\circ\theta(l_j)}$ is a part of the letter from $w$ that is of the form $a^{n'}b^{n'}c^{n'}$, for some $n'$, i.e. $w_{\phi\circ\eta\circ\theta(l_j)}$ is of this form.
\end{claim}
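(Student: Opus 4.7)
I would prove the claim by a careful case analysis on the pair type at index $\phi(\theta(l_j))$, combined with a minimality argument for the decomposition $(\bar{w},\bar{v})$ underlying \eqref{DGsum}.

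First I would set up a cascade of type restrictions starting at $l_j$ itself. Since $u_{l_j}\in\{a,b,c\}$ is a positive generator and $v_{l_j}\neq 1$ by hypothesis, the pair at index $\phi(l_j)$ must be of type (1) $(u_{l_j},u_{l_j})$: the option $w_{\phi(l_j)}=1$ is excluded by $u_{l_j}\neq 1$, and both type (3) and type (2) with $v$-side $1$ would force $t_{l_j}=1$. Hence $t_{l_j}=u_{l_j}$, and since $\theta$ is a match on the trivial word $t$, $t_{\theta(l_j)}=u_{l_j}^{-1}$ is a non-trivial negative generator. Applying the identical reasoning at $\theta(l_j)$ then rules out $\theta(l_j)$ being a landmark $l_{j'}$, for that would force the positive generator $u_{l_{j'}}$ to equal the negative generator $u_{l_j}^{-1}$. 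Thus $\theta(l_j)$ lies strictly inside some trivial subword $u_{l_{j_0}+1}\ldots u_{l_{j_0+1}-1}$, and $\eta(\theta(l_j))$ is a well-defined position inside that same subword.

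Next I would classify the pair at $\phi(\theta(l_j))$. The non-triviality of $t_{\theta(l_j)}$ excludes both type (3) and any pair whose $w$-side is an inverse long letter $(a^{n'}b^{n'}c^{n'})^{-1}$ (which would again admit only type (3)). What remains is either type (1) $(u_{l_j}^{-1},u_{l_j}^{-1})$ or type (2) $(1,u_{l_j}^{-1})$, giving $u_{\theta(l_j)}\in\{u_{l_j}^{-1},1\}$ respectively. In either subcase, $\eta$ pairs $\theta(l_j)$ with some $k=\eta(\theta(l_j))$ inside the trivial subword, satisfying $u_k=u_{\theta(l_j)}^{-1}\in\{u_{l_j},1\}$.

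The heart of the claim, and what I expect to be the main obstacle, is to rule out $w_{\phi(k)}$ being a single-letter entry. My proposed strategy is to appeal to the Ding--Gao minimality of $(\bar{w},\bar{v})$ from Theorem~\ref{DingGao}: were $w_{\phi(k)}$ a single letter, then together with the single-letter pair at $\phi(l_j)$ and (in the type-(1) subcase) the single-letter pair at $\phi(\theta(l_j))$, the non-crossing structure of $\theta$ would allow one to locally excise the positions $l_j$, $\theta(l_j)$, $k$ from $\bar{w}$ and $\bar{v}$. This yields a strictly shorter pair $(\bar{w}',\bar{v}')$ with $\bar{v}'$ still trivial and $\bar{w}'$ still representing $a^nb^nc^n$ under the quotient map $F(A'_n)\to F_3$, but whose cost sum has dropped by a strictly positive amount (coming from the single-letter contribution at $k$), contradicting the minimality in \eqref{DGsum}. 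The delicate point is the type-(2) subcase $u_{\theta(l_j)}=1$: here the match $\eta$ has genuine choice among all matches of the trivial subword, and one must argue that an optimal $(\bar{w},\bar{v})$ together with a compatible $\eta$ can always be chosen so that the $1$-letter at $\theta(l_j)$ is paired by $\eta$ with a $1$ originating from the expansion of some long letter of $w$ rather than with an isolated $1$-entry of $w$; this can be arranged by a counting comparison of positive and negative occurrences of $u_{l_j}$ inside the trivial subword, since each long-letter expansion contributes both positive generators and forced $1$-paddings in $t$. Once this choice-of-match step is in place, the conclusion $w_{\phi\circ\eta\circ\theta(l_j)}=a^{n'}b^{n'}c^{n'}$ for some $n'$ follows.
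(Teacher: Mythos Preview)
Your approach diverges from the paper's in a substantive way. The paper does not attempt a direct type-by-type elimination via minimality; instead it exploits the \emph{irreducibility of $\bar w$ over the alphabet $A_n$}. Concretely, the paper looks at the trivial subword of $u$ lying strictly between $\theta(l_j)$ and $\eta\circ\theta(l_j)$ and argues that, because $\bar w$ is irreducible in $F(A'_n)$, any such cancellation in $u$ must straddle a long letter $a^{m}b^{m}c^{m}$ and an adjacent block of single-letter inverses; it then shows (using minimality of the sum \eqref{DGsum}) that this adjacent inverse block would have to consist of diagonal pairs, which would force the original $\bar w$ to contain the letter $1$, contradicting irreducibility. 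Irreducibility is the engine of the argument; you never invoke it.

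Your proposal has two concrete gaps. First, in the type-(2) subcase you suggest choosing $\eta$ so that the $1$ at position $\theta(l_j)$ is matched with ``a $1$ originating from the expansion of some long letter of $w$''. But long letters $(a^{n'}b^{n'}c^{n'})^{\pm 1}$ expand in $u$ to strings of \emph{nontrivial} generators only; the sole source of $1$'s in $u$ is other type-(2) pairs with $w$-side equal to $1$. Hence any match $\eta$ sends this $1$ to another position with $w_{\phi(\cdot)}=1$, which is not of the form $a^{n'}b^{n'}c^{n'}$, and the claim would fail outright. The counting sketch does not repair this; what is needed is either a further preprocessing step (e.g.\ replace every $(1,x^{\varepsilon})$ by $(x^{-\varepsilon},1),(x^{\varepsilon},x^{\varepsilon})$ so that $u$ contains no $1$'s), or the paper's irreducibility argument.

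Second, your excision argument in the type-(1) subcase does not work as stated. Removing the three positions $l_j,\theta(l_j),k$ from $u$ deletes the letters $u_{l_j},u_{l_j}^{-1},u_{l_j}$, a net removal of one $u_{l_j}$, so the resulting word no longer represents $a^{n}b^{n}c^{n}$ in $F_3$. Even if you excise only $\theta(l_j)$ and $k$ (whose $u$-letters are mutual inverses and so can be removed without changing the $F_3$-image), the pairs at both positions may well be diagonal type-(1) pairs of cost $0$, so the total cost need not drop and you get no contradiction with minimality. The paper avoids this by using irreducibility rather than a length/cost decrease.
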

To prove it, assume that $\theta(l_j)<\eta\circ\theta(l_j)$. The subword\\ $u_{\theta(l_j)+1}\ldots u_{\eta\circ\theta(l_{j+1})-1}$ is trivial. Since the word $\bar{w}$ is an irreducible word over the alphabet $A_n$, the subword $u_{\theta(l_j)+1}\ldots u_{\eta\circ\theta(l_{j+1})-1}$ contains one of those:
\begin{itemize}
\item the subword $a\ldots a b\ldots b c\ldots c$ followed by the subword\\ $c^{-1}\ldots c^{-1} b^{-1}\ldots b^{-1} a^{-1}\ldots a^{-1}$, where the former is a part of the letter from $w$ that is of the form $a^{m}b^{m}c^{m}$, for some $m$; or
\item the subword $c^{-1}\ldots c^{-1} b^{-1}\ldots b^{-1} a^{-1}\ldots a^{-1}$ followed by the subword $a\ldots a b\ldots b c\ldots c$, where the former is a part of the letter from $w$ that is of the form $c^{-m}b^{-m}a^{-m}$, for some $m$.
\end{itemize}
Assume the former (the latter is treated analogously). Then if $w_j=a^{m}b^{m}c^{m}$ is the letter from $w$ corresponding to that subword, then $v_j=1$. Moreover, for each $u_j$ that is a part of the subword\\ $c^{-1}\ldots c^{-1} b^{-1}\ldots b^{-1} a^{-1}\ldots a^{-1}$, i.e. $u_j\in\{a^{-1},b^{-1},c^{-1}\}$, we must have that $v_j=u_j$. If not, then on the corresponding subwords of $w$, resp. $v$, $w_j\ldots w_{j'}$ and $v_j\ldots v_{j'}$ the metric $d_{x\upharpoonright n}$ would not be minimizing; i.e. we could find pairs $(x_1,y_1),\ldots,(x_k,y_k)$ such that $(x_1\ldots x_k)'=(u_{l_j+1}\ldots u_{l_{j+1}-1})'=1$, $(y_1\ldots y_k)'=(v_{l_j+1}\ldots v_{l_{j+1}-1})'$, however\\ $d_{x\upharpoonright n}(x_1,y_1)+\ldots+d_{x\upharpoonright n}(x_k,y_k)$ would be smaller than the sum over the corresponding letters from $w$ and $v$ respectively.

However, then we could assume that the pairs\\ $(a^{m}b^{m}c^{m},1)$,$(c^{-1},c^{-1}),\ldots,(b^{-1},b^{-1}),\ldots,(a^{-1},a^{-1})$ were obtained from the pair $(1,c^{-m}b^{-m}a^{-m})$ in the words $\bar{w},\bar{v}$. That would be a contradiction with the fact that $\bar{w}$ were irreducible as $\bar{w}$ would contain $1$.\\
  
In this case we set 
\begin{itemize}
\item $\tau(l_j)=\eta\circ\theta(l_j)$,
\item $\alpha(l_j)=\phi\circ\eta\circ\theta(l_j)$ to denote that $u_{l_j}$, via `matching', belongs to $w_{\phi\circ\eta\circ\theta(l_j)}$,
\item the cost is then, similarly as above, estimated as follows:  let $C=|\{j'\leq 3n: \alpha(l_{j'})=\alpha(l_j)\}|$ and set $$\pi(l_j)=\frac{d'_{x\upharpoonright n}(w_{\phi\circ\eta\circ \theta(l_j)},1)} {C}=\frac{d'_{x\upharpoonright n}(a^{n'}b^{n'}c^{n'},1)}{C}.$$

\end{itemize}
It is straightforward to check that the cost function $\pi$ gives the estimate 
\begin{equation}\label{last_est}
\sum_{j=1}^{3n} \pi(l_j)\leq d'_{x\upharpoonright}(w_1,v_1)+\ldots+d'_{x\upharpoonright n}(w_i,v_i).
\end{equation}

\begin{lem}\label{lastlemma}
There can be at most one $i'\neq 0$, $i'\in \{1,\ldots,i\}$ such that for some $1\leq j\leq n$, $n+1\leq j'\leq 2n$, $2n+1\leq j''\leq 3n$, i.e. $u_{l_j}=a$, $u_{l_{j'}}=b$ and $u_{l_{j''}}=c$, we have that $\alpha(l_j)=\alpha(l_{j'})=\alpha(l_{j''})=i'$.
\end{lem}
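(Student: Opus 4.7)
The plan is to argue by contradiction. Suppose there are two distinct indices $i_1, i_2$ each satisfying the triple-match hypothesis, so that $w_{i_1}, w_{i_2}$ are composite letters of the form $(a^{n_k'} b^{n_k'} c^{n_k'})^{\pm 1}$ occupying disjoint blocks $[p_1, q_1]$ and $[p_2, q_2]$ in $u$; WLOG $q_1 < p_2$. For each $k \in \{1,2\}$ and $x \in \{a,b,c\}$ let $M(l_{j_x^{(k)}})$ be the matched target: equal to $l_{j_x^{(k)}}$ in Case 1 (where $v_{l_{j_x^{(k)}}} = 1$) and to $\eta \circ \theta(l_{j_x^{(k)}})$ in Case 2. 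By the definition of $\alpha$ and the identity $u_{M(l)} = u_l$, the point $M(l_{j_x^{(k)}})$ lies in the $x$-part of $[p_k, q_k]$. Two basic order constraints then follow: first, since $j_a^{(2)} \le n < 2n+1 \le j_c^{(1)}$, we have $l_{j_a^{(2)}} < l_{j_c^{(1)}}$; second, by the block layout, $M(l_{j_c^{(1)}}) \in [p_1+2n_1', q_1]$ lies strictly before $M(l_{j_a^{(2)}}) \in [p_2, p_2+n_2'-1]$ in $u$.

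The clean subcase is when $l_{j_c^{(1)}}$ and $l_{j_a^{(2)}}$ are both Case 1: then $l_{j_c^{(1)}} = M(l_{j_c^{(1)}}) \le q_1 < p_2 \le M(l_{j_a^{(2)}}) = l_{j_a^{(2)}}$, directly contradicting $l_{j_a^{(2)}} < l_{j_c^{(1)}}$. Equivalently, and in a form that will extend to mixed scenarios: if for a given $k$ both $l_{j_a^{(k)}}$ and $l_{j_c^{(k)}}$ are Case 1, then $[p_k, q_k]$ contains an $a$-designated position $\le l_n$ and a $c$-designated position $\ge l_{2n+1}$, so it contains every integer position in between, in particular $l_{n+1}$. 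If both blocks are forced this way to contain $l_{n+1}$, disjointness is violated.

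The remaining subcases involve Case 2 matches, where one exploits the non-crossing property of $\theta$ together with the structure of $\eta$. When $l_{j_x^{(k)}}$ is Case 2, $\theta(l_{j_x^{(k)}})$ lies in some trivial sub-interval $(l_s, l_{s+1})$ meeting the $x$-part of $[p_k, q_k]$, since $\eta$ fixes designated positions and only pairs non-designated ones within a single such sub-interval. A careful analysis of the bounding designated positions (using that no designated position lies strictly between two consecutive designated positions) shows, for instance, that a Case 2 match of $l_{j_a^{(2)}}$ forces $l_s \ge l_{j_c^{(1)}}$, so the chord $(l_{j_a^{(2)}}, \theta(l_{j_a^{(2)}}))$ must straddle $l_{j_c^{(1)}}$. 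Combining this with the corresponding analysis for a Case 2 match of $l_{j_c^{(1)}}$, whose chord lives near $[p_1, q_1]$ and hence on the opposite side, produces an interlacing of chords violating non-crossing of $\theta$.

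The main obstacle is the partially mixed configuration where the chords at $l_{j_c^{(1)}}$ and $l_{j_a^{(2)}}$ are nested rather than crossing, which is consistent with non-crossing of $\theta$ applied to just those two chords. To close this subcase one must bring in the remaining triple-match data, namely $l_{j_a^{(1)}}$ for $w_{i_1}$ and $l_{j_c^{(2)}}$ for $w_{i_2}$, and apply the same sub-interval analysis to their $\theta$-chords as well. The outcome is that either an all-Case-1 configuration for one of the blocks is produced (triggering the block-spanning contradiction of the second paragraph), or the combined chord system is forced to contain a crossing pair, contradicting non-crossing of $\theta$. This exhausts the cases and establishes the lemma.
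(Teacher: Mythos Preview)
Your overall strategy---assume two distinct composite letters $w_{i_1}, w_{i_2}$ each receive a full $\{a,b,c\}$-triple via $\alpha$, then derive an order contradiction from the block layout and non-crossing---matches the paper's, and your clean all-Case-1 subcase is correct. However, the argument has a genuine gap once Case~2 enters.

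The specific assertion that ``a Case~2 match of $l_{j_a^{(2)}}$ forces $l_s \ge l_{j_c^{(1)}}$'' is not justified and need not hold: the trivial sub-interval $(l_s,l_{s+1})$ containing $\theta(l_{j_a^{(2)}})$ is only constrained to meet the $a$-part of the block $[p_2,q_2]$, and the position of $[p_2,q_2]$ relative to the designated indices $l_1,\ldots,l_{3n}$ is a~priori arbitrary (nothing prevents the block from sitting inside an early trivial sub-interval). So no straddling relation follows from this alone. More seriously, you explicitly flag the nested configuration as ``the main obstacle'' and then dispose of it with a one-sentence promise that bringing in $l_{j_a^{(1)}}$ and $l_{j_c^{(2)}}$ will produce either an all-Case-1 block or a crossing. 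That is exactly where the real combinatorics lives, and without carrying it out the lemma is not proved.

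The paper avoids your Case~1/Case~2 bifurcation altogether by first establishing a structural fact about $\tau$ (your $M$): for any pair $x\in\{l_j,l_{j'},l_{j''}\}$, $y\in\{l_k,l_{k'},l_{k''}\}$, none of the four interlacing patterns $x<y<\tau(x)<\tau(y)$, $\tau(x)<\tau(y)<x<y$, $x<\tau(y)<\tau(x)<y$, $\tau(y)<x<y<\tau(x)$ can occur---i.e.\ $\tau$ behaves like a match on these six points. This is proved by tracing $\eta$ and $\theta$ through the trivial sub-intervals and using irreducibility of $\bar w$; the verification is delicate but uniform and absorbs the case distinction you are struggling with. Once this almost-match property is available, a short order argument forces either $l_k<l_j\le\tau(l_j)<\tau(l_k)$ or the mirror inequality at the $c$-end, and either option, combined with the middle indices $l_{j'},l_{k'}$, immediately produces an interlacing of $\tau$ contradicting the almost-match property. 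I recommend reorganizing along these lines: prove the almost-match property of $\tau$ once, then do pure order combinatorics on six points.
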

Suppose otherwise that there are $i'<i''\neq 0$, $i',i''\in \{1,\ldots,i\}$ such that for some $1\leq j\neq k\leq n$, $n+1\leq j'\neq k'\leq 2n$, $2n+1\leq j''\neq k''\leq 3n$ we have that $\alpha(l_j)=\alpha(l_{j'})=\alpha(l_{j''})=i'$ and $\alpha(l_k)=\alpha(l_{k'})=\alpha(l_{k''})=i''.$ Note that the letter $w_{i'}$ corresponds to some interval of letters from the word $u$ (of the type $a^{n'}b^{n'}c^{n'}$ for some $n'$), the same for the letter $w_{i''}$; i.e. there are disjoint intervals $I'<I''\subseteq \{1,\ldots,l\}$ such that $(u_j)_{j\in I'}$ correspond to $w_{i'}$ and $(u_j)_{j\in I''}$ correspond to $w_{i''}$. Moreover, by definition of the function $\tau$, we have that $\tau(l_j),\tau(l_{j'}),\tau(l_{j''})\in I'$ and  $\tau(l_k),\tau(l_{k'}),\tau(l_{k''})\in I''$, thus $\tau(l_j)<\tau(l_{j'})<\tau(l_{j''})<\tau(l_k)<\tau(l_{k'})<\tau(l_{k''})$. This, together with the fact that $\{l_j,l_k\}<\{l_{j'},l_{k'}\}<\{l_{j''},l_{k''}\}$ (where inequality between sets means that each element from one set 
is smaller than every element from the other set), will lead us to a contradiction. We shall need two claims.
\begin{claim}
For no pair $x\in \{l_j,l_{j'},l_{j''}\}$, $y\in \{l_k,l_{k'},l_{k''}\}$ we can have any of those inequalities
\begin{equation}\label{almostmatch}
\begin{split}
x<y<\tau(x)<\tau(y),\\
\tau(x)<\tau(y)<x<y,\\
x<\tau(y)<\tau(x)<y,\\
\tau(y)<x<y<\tau(x).
\end{split}
\end{equation}
i.e. $\tau$ behaves like a match for such a pair.
\end{claim}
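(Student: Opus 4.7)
My plan is to eliminate each of the four inequalities in turn. Patterns (3) and (4) are immediate: both require $\tau(y)<\tau(x)$, but $\tau(x)\in I'$, $\tau(y)\in I''$, and $I'<I''$ are disjoint intervals, so $\tau(x)<\tau(y)$. Only (1) $x<y<\tau(x)<\tau(y)$ and (2) $\tau(x)<\tau(y)<x<y$ remain.

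I then split into four subcases depending on whether each of $x,y$ is in case~(a) of the definition of $\tau$ (so that $\tau$ is the identity and $\tau(x)=x\in I'$) or in case~(b) (so that $\tau(\cdot)=\eta\circ\theta(\cdot)$). Three subcases follow from elementary inequalities: when $\tau(x)=x\in I'$, (1) demands $y<x$ contradicting $x<y$ and (2) demands $x<x$; when $\tau(y)=y\in I''$, (1) demands $y<\tau(x)\in I'<I''\ni y$ and (2) demands $y<x<y$. So only the subcase $x,y$ both in case~(b) is nontrivial.

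In the (b,b) subcase I will use two non-crossing properties: $\theta$ is a match for the trivial word $t$ and is non-crossing on $\{1,\ldots,l\}$, while $\eta$ is a union of matches on the sub-intervals $J=(l_r,l_{r+1})$ between consecutive frame positions and is non-crossing inside each such $J$. Write $J_x,J_y$ for the sub-intervals containing $\theta(x)$ and $\theta(y)$, so that $\tau(x)\in J_x$ and $\tau(y)\in J_y$. If $J_x\neq J_y$ then $\tau(x)<\tau(y)$ forces $J_x<J_y$; under pattern (1) the condition $y<\tau(x)\in J_x$ combined with $y$ being a frame position gives $y\le l_{r_x}$, whence $\theta(x)>l_{r_x}\ge y$ and $\theta(y)>l_{r_x+1}>\theta(x)$, producing the forbidden crossing configuration $x<y<\theta(x)<\theta(y)$ for $\theta$, a contradiction; pattern (2) is handled symmetrically.

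The essential sub-subcase is $J_x=J_y=:J$, where both $\tau(x),\tau(y)\in J$ and $\tau(x)\in A':=I'\cap J$, $\tau(y)\in A'':=I''\cap J$ lie in disjoint pieces $A'<A''$ of $J$. Under pattern (1), non-crossing of $\theta$ again forces $\theta(y)<\theta(x)$, since otherwise $x<y<\theta(x)<\theta(y)$ would be a $\theta$-crossing. I then plan to enumerate the non-crossing arrangements of the $\eta$-arcs $(\theta(x),\tau(x))$ and $(\theta(y),\tau(y))$ inside $J$ and, in each surviving arrangement, combine the letter-type constraints (the $\theta$-endpoints are inverse letters lying in $J\setminus(I'\cup I'')$, whereas the $\tau$-endpoints lie in the disjoint big-letter subblocks $A'<A''$) with the standard fact that the interior of every $\eta$-arc is a trivial subword, to derive a contradiction. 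I expect this sub-subcase to be the main obstacle, as it requires juggling three combinatorial conditions — non-crossing of $\theta$, non-crossing of $\eta$ inside $J$, and disjointness $A'<A''$ — simultaneously, and an analogous analysis must then be carried out for pattern (2).
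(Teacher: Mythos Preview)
Your preliminary reductions are correct and in fact cleaner than the paper's: eliminating patterns (3) and (4) via $\tau(x)\in I'<I''\ni\tau(y)$, dispatching the cases where one of $\tau(x),\tau(y)$ is a fixed point, and handling $J_x\neq J_y$ by producing a $\theta$-crossing, are all sound. The paper simply asserts that $\tau(l_j)=\eta\circ\theta(l_j)$, $\tau(l_k)=\eta\circ\theta(l_k)$, and that $\theta(l_j),\theta(l_k)$ lie in a common $U_r$, without isolating these side cases.

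The real gap is the remaining sub-subcase $J_x=J_y$. The plan to work only with the single pair $(x,y)$, using non-crossing of $\theta$ and $\eta$, the constraint $A'<A''$, and triviality of $\eta$-arc interiors, does not suffice. A configuration inside one interval $J$ can satisfy all of these simultaneously: for instance with $x=l_j$, $y=l_k$ (so $u_x=u_y=a$), place $\theta(y)<\theta(x)$ both carrying $a^{-1}$, then a trivial block, then $I'=a^{n'}b^{n'}c^{n'}$, then the segment $c^{-n'}b^{-n'}a^{-(n'-1)}$, then $I''$. Let the $\eta$-arc from $\theta(x)$ land on the first $a$ of $I'$ and the $\eta$-arc from $\theta(y)$ nest around it and land on the first $a$ of $I''$. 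Both interiors are trivial, $\eta$ and $\theta$ are non-crossing, the letter types match, and the corresponding $w$-word is irreducible. No contradiction arises from $(x,y)$ alone.

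The paper's argument for this case is genuinely different and uses the \emph{other four} indices. Having written $U_r=s_1\,u_{\theta(l_k)}\,s_2\,u_{\theta(l_j)}\,s_3\,w_{i'}\,s_4\,u_{\tau(l_k)}\,s_5$, it locates $\theta(l_{j'}),\theta(l_{j''})$: their $\tau$-images lie in $I'$, so by $\eta$-non-crossing they sit in $s_2\cup s_4$, and a $\theta$-non-crossing argument excludes $s_2$, forcing both into $s_4$ with $\theta(l_{j''})<\theta(l_{j'})$. Then $\theta(l_{k'}),\theta(l_{k''})$ must lie still further right (since $\tau(l_{k'}),\tau(l_{k''})\in I''$ are beyond $\tau(l_k)$), and one more application of $\theta$-non-crossing yields $l_{k'}<l_{k''}<l_{j'}<l_{j''}$, contradicting $j'\le 2n<2n+1\le k''$. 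So the claim is really about how all six frame positions interact with the two big-letter blocks; you should expect to need at least four of them in the final sub-subcase.
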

The verification is tedious, however it is rather uneventful. It helps to draw a picture of the situation. We show that we cannot have $$l_j<l_k<\tau(l_j)<\tau(l_k).$$ The other cases are completely analogous. Note that we have $\tau(l_j)=\eta\circ\theta(l_j)$ and $\tau(l_k)=\eta\circ\theta(l_k)$, where $\theta$ is a match and $\eta$ is a match of the form $\bigcup_j \eta_j\cup \{(l_j,l_j):j\leq 3n\}$. First of all, since $\eta$ and $\theta$ are matches, in order to have $l_j<l_k<\eta\circ\theta(l_j)<\eta\circ\theta(l_k)$ we must have $l_j<l_k<\theta(l_k)<\theta(l_j)$. Recall that for each $r\leq 3n$, $\eta_r$ is a match on the trivial word $u_{l_r+1}\ldots u_{l_{r+1}-1}$ which we shall denote by $U_r$ here. Thus if we are to have $\eta\circ\theta(l_j)<\eta\circ\theta(l_k)$ there must be $r\leq 3n$ such that both $\theta(l_j)$ and $\theta(l_k)$ belong to the trivial word $U_r$, and so $\eta\circ\theta(l_j)=\eta_r\circ\theta(l_j)$ and $\eta\circ\theta(l_k)=\eta_r\circ\theta(l_k)$. We can write $U_r$ as $$s_1 
u_{\theta(l_k)}=a^{-1} s_2 u_{\theta(l_j)}=a^{-1} s_3 w_{i'}=a^{n'}b^{n'}c^{n'} s_4 a=u_{\tau(l_k)} s_5$$ where $s_1 s_5$ is trivial, $s_3$ is trivial and $s_2 s_4$ is trivial. Since $u_{\tau(l_{j'})}$ and $u_{\tau(l_{j''})}$ are part of $w_{i'}$ we have that $u_{\theta(l_{j'})}$ is either part of $s_2$ or part of $s_4$; the same is true for $u_{\theta(l_{j''})}$. However, none of them can be part of $s_2$. Indeed, suppose that, let us say, $u_{\theta(l_{j'})}$ is part of $s_2$. Then we have that $\theta(l_k)<\theta(l_{j'})<\theta (l_j)$ and since $\theta$ is a match, we have that $l_j<l_{j'}<l_k$ which is a contradiction. The verification for $u_{\theta(l_{j''})}$ is the same. Thus $u_{\theta(l_{j'})}$ and $u_{\theta(l_{j''})}$ are part of $s_4$, and since $\tau(l_{j'})<\tau(l_{j''})<\min\{\theta(l_{j'}),\theta(l_{j''})\}$, we have that $\theta(l_{j''})<\theta(l_{j'})$. Now since $\theta(l_{j''})<\theta(l_{j'})<\tau(l_k)$ and necessarily also $\tau(l_k)<\tau(l_{k'})<\tau(l_{k''})<\min\{\theta(l_{k'}),\
theta(l_{k''})\}$, we have $$\theta(l_{j''})<\theta(l_{j'})<\theta(l_{k''})<\theta(l_{k'}).$$ Since $\theta\circ\theta(l_{j''})=l_{j''}>\theta(l_{k'})$ and $\theta$ is a match we get that $$l_{k'}<l_{k''}<l_{j'}<l_{j''}$$ which is a contradiction.\\

\begin{claim}
We have \begin{enumerate}
\item either $l_k<l_j\leq \tau(l_j)<\tau(l_k)$,
\item or $l_{j''}>l_{k''}\geq\tau(l_{k''})>\tau(l_{j''})$.

\end{enumerate}
\end{claim}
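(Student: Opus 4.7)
The plan is to argue by contradiction: suppose both (1) and (2) fail. Negating (1) produces $l_j<l_k$ or $\tau(l_j)<l_j$; negating (2) produces $l_{j''}<l_{k''}$ or $\tau(l_{k''})>l_{k''}$. This splits the analysis into four subcases, and in each I will derive a contradiction from the preceding claim (that $\tau$ behaves like a match on cross-group pairs, i.e.\ no pattern of \eqref{almostmatch} can occur for $x\in\{l_j,l_{j'},l_{j''}\}$ and $y\in\{l_k,l_{k'},l_{k''}\}$).

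The principal subcase is $l_j<l_k$ together with $l_{j''}<l_{k''}$. Applying the preceding claim to $(l_j,l_k)$ (using that $\tau(l_j)<\tau(l_k)$ is automatic from $I'<I''$) rules out $l_j<l_k<\tau(l_j)<\tau(l_k)$ and $\tau(l_j)<\tau(l_k)<l_j<l_k$, yielding $\tau(l_j)\leq l_k$ and $l_j\leq\tau(l_k)$; applied to $(l_{j''},l_{k''})$ it yields analogously $\tau(l_{j''})\leq l_{k''}$ and $l_{j''}\leq\tau(l_{k''})$. I then split on the order of $l_{j'}$ versus $l_{k'}$ in the middle block: if $l_{j'}<l_{k'}$, the preceding claim on $(l_{j'},l_{k'})$ gives further constraints which, when combined with the forced ordering $\tau(l_j)<\tau(l_{j'})<\tau(l_{j''})<\tau(l_k)<\tau(l_{k'})<\tau(l_{k''})$ and with the block constraint $\{l_j,l_k\}<\{l_{j'},l_{k'}\}<\{l_{j''},l_{k''}\}$, force one of the remaining cross-group pairs (e.g.\ $(l_j,l_{k''})$ or $(l_{j'},l_{k''})$) into a pattern of \eqref{almostmatch}; if instead $l_{j'}>l_{k'}$, the pair $(l_{j'},l_{k'})$ itself is invisible to the preceding claim but the pair $(l_j,l_{k'})$ becomes the witness, and one checks that the induced ordering produces the first pattern of \eqref{almostmatch}.

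The three remaining subcases are treated by mirror arguments exploiting the orientation of individual arcs. In the subcase $\tau(l_j)<l_j$, the arc $l_j\to\tau(l_j)$ runs leftward into $I'$, whereas each arc $l_y\to\tau(l_y)$ for $y\in\{k,k',k''\}$ must reach $I''>I'$, so orientation-mixing forces a match violation on whichever cross-group pair of the form $(l_j,l_{k'})$ or $(l_j,l_{k''})$ minimizes the gap, producing the second pattern of \eqref{almostmatch}. The subcase $\tau(l_{k''})>l_{k''}$ is dually symmetric: the arc $l_{k''}\to\tau(l_{k''})$ runs rightward, and combined with the leftward-like behaviour of the $j$-arcs ending in $I'<I''$ it forces the first pattern of \eqref{almostmatch} to occur on $(l_{j''},l_{k''})$ or $(l_{j'},l_{k''})$. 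Finally, the mixed subcases $\tau(l_j)<l_j$ together with $l_{j''}<l_{k''}$, and $l_j<l_k$ together with $\tau(l_{k''})>l_{k''}$, each combine one position-based constraint with one orientation constraint; the position-based one fixes the ordering of a specific cross-pair, and then the orientation of the remaining arc realises the forbidden interleaving.

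The main obstacle I expect is not the core idea but the bookkeeping: with six positions, six $\tau$-images, and the two disjoint intervals $I'<I''$, one has to select, for each subcase, the right cross-group pair witnessing a pattern of \eqref{almostmatch}. The cleanest route is to draw the six oriented arcs $l\mapsto\tau(l)$ above the line of positions in $u$ and locate the two $\tau$-target intervals $I'$ and $I''$; every subcase then corresponds to a schematic picture in which one of the forbidden crossings is visually unavoidable, which one then writes down as an inequality chain and reads off directly from the previous claim.
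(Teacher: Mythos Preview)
Your plan follows the same overall strategy as the paper---assume both alternatives fail and locate a cross-group pair $(x,y)$ with $x\in\{l_j,l_{j'},l_{j''}\}$, $y\in\{l_k,l_{k'},l_{k''}\}$ realizing one of the forbidden patterns \eqref{almostmatch}---but the case organisation differs. You split into four subcases coming from the disjunctive negations of (1) and (2); the paper instead splits on the single dichotomy $l_j\le\tau(l_j)$ versus $\tau(l_j)<l_j$. In the first branch the paper does \emph{not} aim for a contradiction at all: from $l_j\le\tau(l_j)$ and the failure of (1) it deduces $\tau(l_j)<l_k$, and from there argues directly that (2) must hold. Only in the second branch ($\tau(l_j)<l_j$) does it run a contradiction argument, and there the decisive witness is the mixed-block pair $(l_{j''},l_{k'})$, producing $\tau(l_{j''})<l_{k'}<l_{j''}<\tau(l_{k'})$.

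Two remarks on your sketch. First, the paper's second branch uses an intermediate structural fact you do not mention: from $\tau(l_j)<l_j$ it concludes $\tau(l_{j''})<l_{j''}$ as well (both $\tau$-values lie in the single interval $I'$). This is the kind of ``interval'' reasoning that your four-case split does not surface automatically, and you will likely need it. Second, your proposal leaves the actual witness pair unspecified in every subcase (``force one of the remaining cross-group pairs'', ``whichever pair minimizes the gap''); the paper's experience suggests the correct witness is not always the obvious same-block pair---it is $(l_{j''},l_{k'})$, not $(l_{j''},l_{k''})$, that delivers the contradiction. So your plan is sound, but the paper's two-case split is more economical, and the bookkeeping you anticipate is genuinely where the content lies: pinning down the exact chain of inequalities in each case is the proof, not an afterthought.
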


Suppose that the first case does not happen, however $l_j\leq\tau(l_j)$. Then since $\tau(l_k)>\tau(l_j)$ we necessarily have that $l_k>\tau(l_j)$ because of \eqref{almostmatch}. Since $\tau(l_j)<\tau(l_{j''})<\tau(l_{k''})$ and $\tau(l_j)<\min\{l_{j''},l_{k''}\}$ (since $l_k<\min\{l_{j''},l_{k''}\}$) we necessarily have that $\tau(l_{j''})<\tau(l_{k''})< l_{k''}<l_{j''}$ (again using the `almost-match' property of $\tau$ from \eqref{almostmatch}), and we are in the second case.

Thus suppose that neither the first nor the second case happen and $\tau(l_j)<l_j$. Then necessarily also $\tau(l_{j''})<l_{j''}$. Since the second case does not happen we must have that $l_{j''}<l_{k''}$. Because of \eqref{almostmatch}, we must have that $l_{j''}<\tau(l_{k''})$. However, since $l_k<l_{j''}$ this implies that $\tau(l_{j''})<l_{k'}<l_{j''}<\tau(l_{k'})$ which contradicts \eqref{almostmatch}.

So necessarily either the first or second case happens. Suppose the first case happens. We shall reach a contradiction. The seconds case is symmetric to the first one, so the contradiction can be reached analogously. We necessarily have that $l_{k'}>\tau(l_j)$ since otherwise we would have $l_j<l_{k'}<\tau(l_j)<\tau(l_{k'}$ which would contradict \eqref{almostmatch}. Also, we necessarily have that $l_{j''}<\tau(l_k)$ since otherwise we would have that $l_k<\tau(l_{j''})<\tau(l_k)<l_{j''}$ which would again contradict \eqref{almostmatch}. However then we have that $\tau(l_{j''})<l_{k'}<l_{j''}<\tau(l_{k'})$ which again contradicts \eqref{almostmatch}.\\

We are ready to finish the proof. By Lemma \ref{lastlemma} there is at most one $i'$ such that for some $1\leq j\leq n$, $n+1\leq j'\leq 2n$, $2n+1\leq j''\leq 3n$, i.e. $u_{l_j}=a$, $u_{l_{j'}}=b$ and $u_{l_{j''}}=c$, we have that $\alpha(l_j)=\alpha(l_{j'})=\alpha(l_{j''})=i'$. The worst we can expect is that
\begin{itemize}
\item $w_{i'}=a^{n'}b^{n'}c^{n'}$, where $n'=n/2$ and $d'_{x\upharpoonright n}(a^{n'}\cdot b^{n'}\cdot c^{n'},1)=2n'$,
\item for $n'$-many $a$'s, $n'$-many $b$'s and $n'$-many $c$'s we have that they are associated to $w_{i'}$; i.e. more precisely, there is a set $J\subseteq[1,\ldots,n]$ such that $|J|=n'$ and for every $j\in J$ we have $\alpha(l_j)=i'$; similarly there are sets of size $n'$, $J'\subseteq[n+1,\ldots,2n]$ and $J''\subseteq[2n+1,\ldots,3n]$ such that for every $j\in J'\cup J''$ we have $\alpha(l_j)=i'$.

\end{itemize}
We then have that $$\sum_{j\in J\cup J'\cup J''} \pi(l_j)=3n'\cdot \frac{2n'}{3n'}=2n'.$$
It follows that for any other $i''\neq 0$ in the range of $\alpha$, i.e. corresponding to some word $w_{i''}$ of the form $a^{n''}b^{n''}c^{n''}$ we can have at most $2n''$ associated $j$'s from $[1,\ldots,3n]$ such that $\alpha(l_j)=i''$. For any such $j$ the cost is thus $$\pi(l_j)=\frac{d'_{x\upharpoonright n}(a^{n''}\cdot b^{n''}\cdot c^{n''},1)}{|\{j':\alpha(j')=i''\}|}\geq \frac{2n''}{2n''}=1.$$ Since also for any $j\in [1,\ldots,3n]$ such that $\alpha(l_j)=0$ we have $\pi(l_j)=1$,
it follows that we have that $$\sum_{j\in [1,\ldots,3n]\setminus (J\cup J'\cup J'')} \pi(l_j)\geq |[1,\ldots,3n]\setminus (J\cup J'\cup J'')|=3n'.$$
Combining that with \eqref{last_est} we get $$d'_{x\upharpoonright}(w_1,v_1)+\ldots+d'_{x\upharpoonright n}(w_i,v_i)\geq \sum_{j=1}^{3n} \pi(l_j)\geq 2n'+3n'=3n-n/2.$$ This finishes the proof.
\end{proof}
\section{Concluding remarks and open problems}
Let us conclude with a remark on how to construct analogous universal groups of higher densities (for certain cardinals) and how the situation there differs from the countable case. Then we proceed to several problems and questions that arose during our research.
\subsection{Universal groups of uncountable weight}
We briefly describe how to generalize the construction for certain uncountable density, resp. weight. Let $\kappa$ be an uncountable cardinal such that $\kappa^{<\kappa}=\kappa$. Consistently, there is no such cardinal. On the other hand, consistently, assuming the generalized continuum hypothesis, every successor cardinal has this property. It is known that under this assumption, the generalized Fra\" iss\' e theorem holds, i.e. for any class of structures generated by less than $\kappa$-many elements which is of size at most $\kappa$ and has the joint-embedding and amalgamation properties, there is a corresponding Fra\" iss\' e limit. Moreover, the following observation will be useful.
\begin{observation}\label{observ_limit}
Assume the generalized continuum hypothesis. Let $L$ be some, say countable, signature. Let $\lambda$ be an uncountable limit cardinal and let $\Age_\lambda$ be some class of $L$-structures generated by at most $\lambda$-many elements which is closed under taking direct limits of directed systems from $\Age_\lambda$ of size less than $\lambda$. For every infinite successor cardinal $\kappa^+<\lambda$, let $\Age_\kappa$ be the subclass of $\Age_\lambda$ consisting of $L$-structures generated by at most $\kappa$-many elements, and suppose that such $\Age_\kappa$ has joint-embedding and amalgamation properties. Then there is an $L$-structure $U$ generated by at most $\lambda$-many elements which contains isomorphically as a substructure every $L$-structure from $\Age_\lambda$.

Indeed, this is just an iteration of the Fra\" iss\' e construction. Start with some uncountable cardinal $\kappa<\lambda$. Since $(\kappa^+)^\kappa=2^\kappa=\kappa^+$, we have that $\Age_\kappa$ has a Fra\" iss\' e limit $U_\kappa$. Since $\Age_\lambda$ is closed under taking direct limits, $U_\kappa\in\Age_\lambda$. By induction, we can repeat the construction for every uncountable cardinal $\kappa<\lambda$ to obtain $U_\kappa$ in such a way that for $\kappa_1<\kappa_2<\lambda$, $U_{\kappa_1}\subseteq U_{\kappa_2}$. The direct limit $U$ of the system $\{U_\kappa:\kappa<\lambda\}$ is then the desired universal $L$-structure. We note that although $U$ is universal, it does not necessarily have the homogeneity property of Fra\" iss\' e limits.
\end{observation}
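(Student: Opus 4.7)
The plan is to construct $U$ as the direct limit of a cofinal $\subseteq$-chain of Fra\" iss\' e limits indexed by the cardinals below $\lambda$. First, using GCH, I would verify that for every infinite cardinal $\kappa$ with $\kappa^+<\lambda$ one has $(\kappa^+)^\kappa=2^\kappa=\kappa^+$, and more generally $(\kappa^+)^{<\kappa^+}=\kappa^+$, so the hypothesis of the generalized Fra\" iss\' e theorem is met by the subclass $\Age_\kappa\subseteq\Age_\lambda$. This supplies me, for each such $\kappa$, with a Fra\" iss\' e limit $U_\kappa$: an $L$-structure generated by at most $\kappa^+$ elements that is universal and ultrahomogeneous for $\Age_\kappa$. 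Since $U_\kappa$ is a direct limit of members of $\Age_\lambda$ along a system of size $<\lambda$, the closure hypothesis places $U_\kappa$ back inside $\Age_\lambda$.

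Next, by transfinite recursion on the infinite cardinals $\kappa<\lambda$ I would build a continuous chain $(U_\kappa)$ with $U_{\kappa_1}\subseteq U_{\kappa_2}$ whenever $\kappa_1<\kappa_2<\lambda$. At a successor step, the already constructed $U_\kappa$ lies in $\Age_\lambda$ and is generated by $<\kappa^{++}$ elements, so it belongs to $\Age_{\kappa^+}$ and embeds into $U_{\kappa^+}$ by universality of the latter; I then use ultrahomogeneity of $U_{\kappa^+}$ to correct this embedding so that it agrees with the inclusions constructed earlier, and I identify $U_\kappa$ with its image. At a limit stage I pass to the direct limit, which remains in $\Age_\lambda$ because the chain so far has length strictly less than $\lambda$. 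Setting $U:=\bigcup_{\kappa<\lambda}U_\kappa$, cofinality of the chain together with $\lambda$ being a limit cardinal forces $|U|\le\lambda$.

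Universality of $U$ would then be verified by a standard back-and-forth style argument. Given $X\in\Age_\lambda$, I would write $X=\bigcup_{\alpha<\lambda}X_\alpha$ as a continuous filtration by substructures each generated by $<\lambda$ elements, so $X_\alpha\in\Age_{\kappa_\alpha}$ for some $\kappa_\alpha<\lambda$. By recursion on $\alpha$ I would produce embeddings $\iota_\alpha\colon X_\alpha\hookrightarrow U_{\kappa_\alpha}\subseteq U$ extending each other: at successor stages I use universality of $U_{\kappa_\alpha}$ to obtain a new embedding and then ultrahomogeneity to adjust it so it restricts to $\iota_{\alpha-1}$, while at limit stages I take the union. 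Taking the common extension yields the required embedding $X\hookrightarrow U$.

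The main obstacle is the coherence step appearing both in the construction of the cofinal chain and in the universality argument: at each successor stage I must extend an isomorphism between substructures of $U_{\kappa_\alpha}$ of size $<\kappa_\alpha$ to an automorphism of $U_{\kappa_\alpha}$. This is precisely the generalized ultrahomogeneity of the Fra\" iss\' e limit, which is available only because GCH forces $\kappa^{<\kappa}=\kappa$ for the cardinals in question. Notice that the construction produces a merely universal $U$, not an ultrahomogeneous one, since ultrahomogeneity at level $\kappa$ is not automatically inherited after absorption into the larger $U_{\kappa'}$ for $\kappa'>\kappa$.
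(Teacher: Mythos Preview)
Your proposal is correct and follows essentially the same approach as the paper: build a cofinal chain of Fra\"iss\'e limits $U_\kappa$ for $\kappa<\lambda$ using GCH to secure $(\kappa^+)^{<\kappa^+}=\kappa^+$, and take their direct limit. You are more explicit than the paper about two points the observation leaves implicit---namely, that the inclusions $U_{\kappa_1}\subseteq U_{\kappa_2}$ are arranged via the ultrahomogeneity (extension property) of the larger limit, and that universality of $U$ is proved by filtering an arbitrary $X\in\Age_\lambda$ and extending embeddings stage by stage---but the paper does spell out exactly this extension argument in the proof of the subsequent corollary, so the strategies coincide. One cosmetic remark: what you call a ``back-and-forth style argument'' is really only a forth argument, since you only extend embeddings of $X$ into $U$ and never go back.
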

\begin{thm}\label{GCH_univ}
Let $\kappa$ be an uncountable cardinal such that $\kappa^{<\kappa}=\kappa$. Then there exists a metrically universal group $\mathbb{G}_\kappa$, with bi-invariant metric, of density/weight $\kappa$. Thus any group equipped with bi-invariant metric which has density less or equal to $\kappa$ embeds into $\mathbb{G}_\kappa$ via an isometric monomorphism.
\end{thm}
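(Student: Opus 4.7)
The plan is to run a generalized Fra\"iss\'e construction parallel to the one in Section 1 but at cardinality $\kappa$, with the essential difference that we drop both the rationality and the boundedness of the metric, since the obstruction from Section 2 no longer forces a separability collision once the density is $\kappa\geq\aleph_1$. Define $\Gr_\kappa$ to be the class whose objects are pairs $(F,d_F)$ where $F$ is a free group with strictly fewer than $\kappa$ free generators and $d_F$ is an arbitrary bi-invariant metric on $F$, with morphisms the isometric monomorphisms sending free generators to free generators, exactly as in Definition \ref{GRdefin}. The hypothesis $\kappa^{<\kappa}=\kappa$ enters to bound the size of this class: it forces $2^{\aleph_0}\leq\kappa$ (since $2^{\aleph_0}\leq\kappa^{\aleph_0}\leq\kappa^{<\kappa}=\kappa$), so for each $\mu<\kappa$ the number of bi-invariant metrics on $F_\mu$ is at most $\mathfrak{c}^\mu\leq\kappa^{<\kappa}=\kappa$, and summing over $\mu<\kappa$ still yields $|\Gr_\kappa|\leq\kappa$.

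Joint embedding and amalgamation for $\Gr_\kappa$ follow by the same argument that proved $\Gr_K$ is a Fra\"iss\'e class; in fact the proof simplifies, since without the boundedness constraint one does not need to truncate at $K$ and may use the bi-invariant amalgam $\underline{p}$ provided directly by Slutsky's theorem quoted in Section 1. Applying the generalized Fra\"iss\'e theorem (whose hypotheses are exactly $|\Gr_\kappa|\leq\kappa$, JEP, AP, and that the objects are generated by fewer than $\kappa$ elements) produces a limit object $G_\kappa$ which is a free group on $\kappa$ generators equipped with a bi-invariant metric $d_\kappa$, together with the evident analogue of the extension property in Fact \ref{Gcharac}. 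Since $d_\kappa$ is bi-invariant the group operations are uniformly continuous, so they extend to the metric completion; let $\mathbb{G}_\kappa$ denote this completion, a topological group of weight $\kappa$ carrying a complete bi-invariant metric.

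For universality, let $(H,d_H)$ be any group with bi-invariant metric of density at most $\kappa$. A transfinite version of Theorem \ref{free-dense}, obtained by iterating Theorem \ref{Graevext} along $\kappa$ while bookkeeping all pairs consisting of an element of a fixed dense subset of $H$ and a rational $\varepsilon>0$, lets us assume without loss of generality that $H$ contains a free group $F$ of rank at most $\kappa$ as a dense subgroup. Enumerate the free generators as $(g_\alpha)_{\alpha<\kappa}$ and let $F_\alpha$ denote the subgroup they generate. Then each $(F_\alpha,d_H\!\upharpoonright\!F_\alpha)$ lies in $\Gr_\kappa$, the inclusions $F_\alpha\hookrightarrow F_{\alpha+1}$ are morphisms of $\Gr_\kappa$, and $F=\bigcup_{\alpha<\kappa}F_\alpha$ is their direct limit. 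A transfinite recursion using the extension property of $G_\kappa$ --- standard cofinality arguments handle the limit stages --- produces a compatible family of isometric monomorphisms $F_\alpha\hookrightarrow G_\kappa$ whose union is an isometric monomorphism $F\hookrightarrow G_\kappa\subseteq\mathbb{G}_\kappa$, and passing to the completion yields the desired isometric embedding $H\hookrightarrow \mathbb{G}_\kappa$.

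The main obstacle I anticipate is the transfinite generalization of Theorem \ref{free-dense}. The countable proof assembles a compatible sequence of Graev-style amalgams indexed by $\Nat$, but in length $\kappa$ one must verify at limit ordinals that the union of the previously-constructed metrics is still a bi-invariant metric on the union of the free groups, and one must arrange the diagonal bookkeeping so that every element of the original (dense) subgroup of $H$ is approximated, not just those introduced before the current limit stage. This should go through by a routine cofinality/diagonalization argument, but it is the place where the proof most genuinely departs from the countable case and deserves the most care.
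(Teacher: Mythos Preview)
Your proposal is correct and follows essentially the same route as the paper: a generalized Fra\"iss\'e construction on free groups of rank $<\kappa$ with arbitrary bi-invariant (pseudo)metrics, amalgamated via the Slutsky-style amalgam, followed by the transfinite analogue of Theorem~\ref{free-dense} to reduce universality to embedding free groups. The only cosmetic difference is that the paper works with bi-invariant \emph{pseudometrics} throughout and takes the metric quotient only at the end, which spares one from checking that the amalgam is a genuine metric; your choice to work with metrics is equally valid once Slutsky's theorem is invoked.
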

Note that now there is no restriction on the diameter of $\mathbb{G}_\kappa$. $\mathbb{G}_\kappa$ is unbounded and contains isometric copies of unbounded metric groups.\\

The construction is analogous to the construction of the universal $\mathbb{G}_K$ from the first section, just much easier. We consider the class $\Age_\kappa$ of all free groups of strictly less than $\kappa$ generators equipped with bi-invariant {\it pseudometric}. The pseudometric might be arbitrary, it does not have to be rational and it does not have to be generated by values on some proper subset of the free group. The reason for that is that there are still only $\kappa$-many such groups. However, we restrict the class of morphisms between them to those that send generators to generators, as in the countable case.

The amalgamation is then proved analogously. Suppose we have $G_0$, $G_0\ast F_1$ and $G_0\ast F_2$ where the pseudometric $d_i$, $i\in \{1,2\}$, on $G_0\ast F_i$ when restricted to $G_0$ is equal to the pseudometric on $G_0$. Then we form the canonical algebraic amalgamation $G_0\ast F_1\ast F_2$. Let $d'\supseteq d_1\cup d_2$ be the partial pseudometric on $G_0\ast F_1\ast F_2$ defined so that for any $a\in G_0\ast F_1$, $b\in G_0\ast F_2$, where $G_0$, $G_0\ast F_1$ and $G_0\ast F_2$ are all considered as subgroups of $G_0\ast F_1\ast F_2$, we have $$d'(a,b)=\inf \{d_1(a,c)+d_2(c,b):c\in G_0\}.$$ We then extend $d'$ to the whole $G_0\ast F_1\ast F_2$ canonically; i.e. for any $a,b\in G_0\ast F_1\ast F_2$ we set $$d(a,b)=\inf\{d'(a_1,b_1)+\ldots+d'(a_n,b_n):$$ $$a=a_1\cdot\ldots\cdot a_n,b=b_1\cdot\ldots\cdot b_n,a_1,b_1,\ldots,a_n,b_n\in G_0\ast F_1\cup G_0\ast F_2\}.$$

We thus get a certain limit $G_\kappa$ of this amalgamation class which is a free group with $\kappa$-many generators equipped with bi-invariant pseudometric. We take the quotient and then the completion (actually, an easy argument shows that the limit was already complete). This will be the desired group $\mathbb{G}_\kappa$. Let $G$ be any group with bi-invariant metric and density at most $\kappa$. Using Theorem \ref{free-dense} as in the countable case we may suppose that $G$ has the free group of $\kappa$-many generators as a dense subgroup. Using universality of $G_\kappa$ we can find an isometric copy of this free group in $G_\kappa$. It follows that $G$ then lies in $\mathbb{G}_\kappa$.\\

Using Theorem \ref{GCH_univ} and ideas from Observation \ref{observ_limit} we can, under GCH, construct metrically universal group with bi-invariant metric even for limit uncountable cardinals, so those for which Theorem \ref{GCH_univ} cannot be applied directly.
\begin{cor}\label{corGCH}
Under GCH, there is a metrically universal group $\mathbb{G}_\kappa$, with bi-invariant metric, of density/weight $\kappa$, for an arbitrary uncountable cardinal $\kappa$.
\end{cor}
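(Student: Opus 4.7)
The plan is to handle the remaining cases — namely the uncountable limit cardinals $\lambda$, since the successor case $\kappa = \mu^+$ satisfies $\kappa^{<\kappa} = \kappa$ under GCH and is therefore already covered by Theorem \ref{GCH_univ}. Fix such a limit cardinal $\lambda$. I will apply the scheme outlined in Observation \ref{observ_limit} to the class $\Age_\lambda$ of all free groups on at most $\lambda$-many generators equipped with a bi-invariant pseudometric, with morphisms being those isometric monomorphisms that send free generators to free generators (as in the proof of Theorem \ref{GCH_univ}).

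First I would verify the two structural hypotheses required by the observation. For every uncountable successor cardinal $\kappa^+ < \lambda$, the subclass $\Age_{\kappa^+}\subseteq\Age_\lambda$ of structures generated by at most $\kappa^+$-many elements has size $\kappa^+$ (using $(\kappa^+)^{<\kappa^+} = \kappa^+$ from GCH) and has the joint-embedding and amalgamation properties; these are proved exactly as in Theorem \ref{GCH_univ}, by forming the canonical algebraic amalgamation $G_0 \ast F_1 \ast F_2$ and defining the partial bi-invariant pseudometric $d'(a,b) = \inf\{d_1(a,c)+d_2(c,b) : c \in G_0\}$ for $a\in G_0\ast F_1$, $b\in G_0\ast F_2$, then extending it to the whole amalgam by the greatest bi-invariant pseudometric recipe. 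Next, I would check that $\Age_\lambda$ is closed under direct limits of directed systems of cardinality less than $\lambda$: given such a system of free groups with bi-invariant pseudometrics and generator-preserving isometric monomorphisms, the direct limit is still a free group (with the union of generator sets, of cardinality less than $\lambda \leq \lambda$), and the limit pseudometric is bi-invariant by continuity of the group operations under a bi-invariant pseudometric.

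With these verifications, Observation \ref{observ_limit} produces a chain $U_{\kappa^+} \subseteq U_{\kappa^{++}} \subseteq \ldots$ (indexed by successor cardinals below $\lambda$), where each $U_{\kappa^+}$ is the Fra\"iss\'e limit of $\Age_{\kappa^+}$. Its direct limit $G_\lambda := \varinjlim_{\kappa<\lambda} U_\kappa$ is a free group on $\lambda$-many generators equipped with a bi-invariant pseudometric which contains an isometric copy of every member of $\Age_\lambda$. Quotienting by the kernel of the pseudometric and taking the metric completion yields a Polish-like group $\mathbb{G}_\lambda$ with complete bi-invariant metric of weight $\lambda$; the group operations extend to the completion by bi-invariance, exactly as in the countable case.

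Finally, to upgrade universality for free groups with pseudometric to metrical universality for arbitrary groups with bi-invariant metric of density at most $\lambda$, I would invoke Theorem \ref{free-dense} in the obvious cardinal-generalized form: any such group $H$ embeds isometrically into some $H \ast F_\lambda$ carrying a bi-invariant metric in which $F_\lambda$ is dense. The proof of Theorem \ref{free-dense} runs through Theorem \ref{Graevext} and iterates $\lambda$-many times to approximate a fixed dense subset of $H$ of cardinality $\lambda$ by free generators; this transfinite iteration works verbatim. Since $H \ast F_\lambda$ (with its bi-invariant metric on the free group $F_\lambda$) lies in $\Age_\lambda$, it embeds into $G_\lambda$ and hence into $\mathbb{G}_\lambda$, whence so does $H$. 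The main obstacle I would expect is not any single step but the bookkeeping for the transfinite iteration in Observation \ref{observ_limit}: one must ensure coherence of the chain of Fra\"iss\'e limits, i.e.\ that $U_{\kappa_1}$ embeds canonically into $U_{\kappa_2}$ for $\kappa_1 < \kappa_2 < \lambda$, and this requires choosing the successive Fra\"iss\'e limits along a fixed inclusion of generator sets — a routine but slightly delicate task.
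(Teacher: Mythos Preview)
Your proposal is correct and follows essentially the same route as the paper: build a chain of Fra\"iss\'e limits for successor cardinals below the limit cardinal via Observation \ref{observ_limit}, take the direct limit, then quotient and complete. The only cosmetic difference is in the final universality step --- you reduce once to the free-group case via a cardinal-generalized Theorem \ref{free-dense} and then invoke the universality clause of Observation \ref{observ_limit} as a black box, whereas the paper writes the dense subgroup of the target group as an increasing chain and extends embeddings stage by stage using the extension property of each successive Fra\"iss\'e limit; these are the same argument packaged differently.
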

\begin{proof}
If $\kappa$ is a successor cardinal, then $\kappa^{<\kappa}=\kappa$, and so we use the Fra\" iss\' e construction above. So suppose now that $\kappa$ is a limit of infinite cardinals $(\lambda_\beta)_{\beta<\alpha}$, where $\alpha$ is the cofinality of $\kappa$ and for each $\beta<\alpha$, if $\beta$ is isolated, then $\lambda_\beta$ is isolated as well. We use the ideas already explained in Observation \ref{observ_limit}. For each isolated $\beta<\alpha$ there is a group $G_\beta$ which is the Fra\" iss\' e limit of the class $\Age_{\lambda_\beta}$ as above. Moreover, we can suppose that for each $\beta_1<\beta_2$ we have that $G_{\beta_1}$ is a subgroup of $G_{\beta_2}$ and for each $\gamma<\alpha$ limit we have a group $G_\gamma$ defined as the direct limit of $(G_\beta)_{\beta<\gamma}$. Then we define $G_\alpha$ to be the direct limit of $(G_\beta)_{\beta<\alpha}$ and we set $\mathbb{G}_\kappa$ to be the quotient and completion of $G_\alpha$. We claim that $\mathbb{G}_\kappa$ is metrically universal for 
groups with bi-invariant metric of density at most $\kappa$.

Let $\mathbb{H}$ be a metric group with bi-invariant metric with density at most $\kappa$. Suppose without loss of generality that the density is exactly $\kappa$. We denote by $H$ a dense subgroup of size $\kappa$ and write $H$ as the direct limit of subgroups $(H_\beta)_{\beta<\alpha}$, where $H_\beta$, for $\beta<\alpha$, has size $\beta$. Take some isolated $\beta_0<\alpha$. Then we can find an isometric monomorphism $\phi_{\beta_0}:H_{\beta_0}\rightarrow G_{\beta_0}$. By the extension property of $G_{\beta_0+1}$ we can extend it to an isometric monomorphism $\phi_{\beta_0+1}: H_{\beta_0+1}\rightarrow G_{\beta_0+1}$. We continue analogously by induction, taking direct limits at the limit steps. At the end we get an isometric monomorphism $\phi:H\rightarrow G_\alpha\leq \mathbb{G}_\kappa$. Then the completion $\mathbb{H}$ of $H$ lies inside $\mathbb{G}_\kappa$ isometrically.
\end{proof}
\begin{cor}
Under GCH, there is a universal SIN group of weight $\kappa$ for every infinite cardinal $\kappa$.
\end{cor}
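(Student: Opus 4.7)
The plan is to split the proof into the countable and uncountable cases. For $\kappa=\aleph_0$ the statement is exactly the second assertion of Theorem \ref{main2}: the Polish group $\mathbb{G}_1$ (indeed any $\mathbb{G}_K$) is universal for second countable SIN groups. So from this point on I would assume $\kappa$ is uncountable and work under GCH.

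For uncountable $\kappa$ the idea is to use Corollary \ref{corGCH} to produce a metrically universal bi-invariant metric group $\mathbb{G}_\kappa$ of weight $\kappa$, and then take the Tychonoff power $U_\kappa:=\mathbb{G}_\kappa^{\,\kappa}$ as the candidate universal SIN group. Since the class of SIN groups is closed under arbitrary products and closed subgroups, $U_\kappa$ is a SIN group, and the weight of a power of a metrizable space is easily seen to be $w(\mathbb{G}_\kappa)\cdot\kappa=\kappa\cdot\kappa=\kappa$, so $U_\kappa$ has weight $\kappa$ as required.

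Next I would show that every Hausdorff SIN group $G$ of weight at most $\kappa$ embeds as a topological subgroup of $U_\kappa$. For this I would invoke the standard fact that in a SIN group of weight $\leq\kappa$ the topology is generated by a family $\{d_\alpha:\alpha<\kappa\}$ of continuous bi-invariant pseudometrics; one obtains this by fixing a conjugation-invariant neighbourhood basis at the identity of size $\leq\kappa$ and running the Birkhoff--Kakutani construction on each. For each $\alpha$ set $N_\alpha:=\{g\in G:d_\alpha(g,e)=0\}$; this is a closed normal subgroup and $G/N_\alpha$, equipped with the induced metric $\bar d_\alpha$, is a metrizable SIN group, hence (as a metric group) has weight equal to its density, which is at most $\kappa$. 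By Corollary \ref{corGCH}, $(G/N_\alpha,\bar d_\alpha)$ embeds isometrically, and therefore topologically, into $\mathbb{G}_\kappa$. Composing each quotient map with the corresponding embedding and diagonalising gives a continuous group homomorphism $\Phi:G\rightarrow U_\kappa$; since the family $\{d_\alpha\}_{\alpha<\kappa}$ jointly generates the topology of $G$ and separates its points, $\Phi$ is both injective and a topological embedding.

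The main technical point is not the embedding construction, which is essentially forced once Corollary \ref{corGCH} is available, but the cardinal arithmetic bookkeeping: one has to verify that each metrizable quotient $G/N_\alpha$ genuinely has density at most $\kappa$ so that Corollary \ref{corGCH} applies, and that $w(U_\kappa)=\kappa$. Both are routine once $\kappa$ is infinite, but they are where any attempt to weaken GCH or to handle cardinals of very small cofinality directly would run into trouble. Modulo these verifications the corollary follows immediately.
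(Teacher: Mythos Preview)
Your proposal is correct and follows essentially the same approach as the paper: split into the countable case (handled by Theorem \ref{main2}) and the uncountable case, where you take the $\kappa$-fold power of the metrically universal $\mathbb{G}_\kappa$ from Corollary \ref{corGCH} and embed an arbitrary SIN group $G$ of weight $\leq\kappa$ via the diagonal of the quotient maps $G\to G/N_\alpha$ associated to a generating family of bi-invariant pseudometrics. The paper's argument is the same, only with slightly less commentary on the cardinal arithmetic you flag at the end.
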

\begin{proof}
Let $\kappa$ be an infinite cardinal. If $\kappa$ is countable then we are done by Theorem \ref{main2}, so suppose that $\kappa$ is uncountable. By Corollary \ref{corGCH}, there is a metrically universal group $\mathbb{G}_\kappa$ with bi-invariant metric of density $\kappa$. Denote by $\mathbb{H}$ the product of $\kappa$-many copies of $\mathbb{G}_\kappa$. It is a topological SIN group of weight $\kappa$. Now let $F$ be any topological SIN group of weight at most $\kappa$. It is well known that the topology of $F$ is determined by a family $(p_\alpha)_{\alpha<\kappa}$ of $\kappa$-many bi-invariant pseudometrics (we allow repetitions in the family, so we assume its size is indeed exactly $\kappa$). For each $\alpha<\kappa$ denote by $F_\alpha$ the metric group which is the quotient of $F$ with respect to the pseudometric $p_\alpha$; i.e. $F/N_\alpha$, where $N_\alpha$ is the closed normal subgroup $\{f\in F:p_\alpha(f,1)=0\}$. Clearly, $F$ embeds topologically as a subgroup of $\prod_{\alpha<\kappa} F_\alpha$,
 thus it suffices to embed $\prod_{\alpha<\kappa} F_\alpha$ into $\mathbb{H}$. However, that is immediate since $\mathbb{H}
=\prod_{\alpha<\kappa} \mathbb{G}_\kappa$, and for each $\alpha<\kappa$, $F_\alpha$ embeds isometrically into $\mathbb{G}_\kappa$.
\end{proof}
\subsection{Open problems}
Let us start with a problem related to a topic in geometric group theory. The following question was asked to us by Pestov.
\begin{question}\label{que1}
Can the universal group $\mathbb{G}$ be obtained as a subgroup of a metric ultraproduct of finite groups with bi-invariant metric?
\end{question}
We shall comment on this question more. Let $\mathcal{M}$ be some class of groups equipped with bi-invariant metric. Let $G$ be an arbitrary (discrete) group. We say that $G$ is \emph{$\mathcal{M}$-approximable} if there is some constant $K$ such that for every finite subset $F\subseteq G$ and every $\varepsilon>0$ there exist a group $(H,d)\in \mathcal{M}$ and a map $\varphi: F\rightarrow H$ such that
\begin{itemize}
\item for every $f,g\in F$ such that $f\cdot g\in F$ we have $d(\varphi(f)\cdot\varphi(g),\varphi(f\cdot g))<\varepsilon$,
\item if $1\in F$ then we have $d(1_H,\varphi(1))<\varepsilon$,
\item for every $f\neq g\in F$ we have $d(\varphi(f),\varphi(g))\geq K$.

\end{itemize}
An equivalent definition is the following: $G$ is $\mathcal{M}$-approximable if any finitely generated subgroup of $G$ (algebraically) embeds into a metric ultraproduct of groups from $\mathcal{M}$.

The most interesting cases are when $\mathcal{M}$ is the set of unitary groups of finite rank equipped with the Hilbert-Schmidt distance and when $\mathcal{M}$ is the set of finite permutation groups equipped with the normalized Hamming distance. For the former, such $\mathcal{M}$-approximable groups are then called hyperlinear, and for the latter, such $\mathcal{M}$-approximable groups are called sofic. The major open problem is whether every group is hyperlinear and sofic. We refer the reader to the survey \cite{Pe-sh} about these groups.

When $\mathcal{M}$ is the set of all finite groups equipped with some bi-invariant metric, then such $\mathcal{M}$-approximable groups are called weakly sofic. Weakly sofic groups as a generalization of sofic groups were introduced by Glebsky and Rivera in \cite{GlRi} as the existence of a non-weakly sofic group is equivalent to a certain conjecture about pro-finite topology on finitely generated free groups.

Positive answer to Question \ref{que1} would thus imply that every group is weakly-sofic. Indeed, it is enough to prove it for countable groups (resp. just finitely generated). Each such a group can be equipped with a trivial bi-invariant metric (which takes values just $0$ and $1$) and it thus exists as a subgroup of $\mathbb{G}$.

The most direct approach how to prove it would be to answer the following question which is in our opinion interesting in its own right. We will see it will also imply the next mentioned problem, so it is in our opinion the most interesting and important question posed in this paper.
\begin{question}\label{que2}
Let $(F_n,\rho)$ be a finitely generated free group with a bi-invariant metric and let $A\subseteq F_n$ be its finite subset and $\varepsilon>0$. Does there exist a finite group $F$ equipped with some bi-invariant metric and partial monomorphism $\iota: A\subseteq F_n\rightarrow F$ which is also an $\varepsilon$-isometry?
\end{question}
Notice that a positive answer to Question \ref{que2} would also lead to a positive answer to Question \ref{que1}. Indeed, note at first, that $G$, the canonical dense subgroup of $\mathbb{G}$ obtained by the Fra\" iss\' e construction, is a direct limit of certain sequence $(G_n)_n$ of finitely generated free groups with rational finitely generated metric. Assume that for each $n$, the metric on $G_n$ is generated by values on a finite subset $A_n\subseteq G_n$. Suppose that for each $n$ we have a finite group $F_n$ with bi-invariant metric and a partial monomorphism $\iota_n: A_n\subseteq G_n\rightarrow F_n$ which is an $1/2^n$-isometry. Then it is straightforward to check that the metric ultraproduct of the sequence $(F_n)_n$ would contain isometrically $G$, and thus, since it is complete, also $\mathbb{G}$.\\

A possible approach to Question \ref{que2} is to find a finite group $F$ with a finite subset $A'\subseteq F$ such that there is a partial monomorphism $\nu:A\subseteq F_n\rightarrow A'\subseteq F$. Such $F$ always exists since free groups are residually finite. Moreover, we can assume that $A'$ generates $F$. Then one can try to define a bi-invariant metric on $F$ as follows: for every $a,b\in F$ set $$d_F(a,b)=\min\{\rho(\nu^{-1}(a_1),\nu^{-1}(b_1))+\ldots+\rho(\nu^{-1}(a_n),\nu^{-1}(b_n)):$$ $$a=a_1\cdot\ldots\cdot a_n,b=b_1\cdot\ldots\cdot b_n,\, \forall i\leq n (a_i,b_i)\in (A')^2\}.$$ Note however that there is no guarantee we will have for every pair $a,b\in A'$ that $d_F(a,b)=\rho(\nu^{-1}(a),\nu^{-1}(b))$. Can $F$ be chosen in such a way that this holds true?\\

We were directed to the following question by Tsankov: 
\begin{question}\label{queTsankov}
Is $\mathbb{G}$ extremely amenable?
\end{question}
Recall that a group is extremely amenable if whenever it acts continuously on a compact Hausdorff space, then it has a fixed point (we refer to \cite{Pe} for more information about extreme amenability). In \cite{MeTs}, Melleray and Tsankov prove that for any `suitable' countable abelian group $G$, the set of those bi-invariant metrics $d$ on $G$ such that $(G,d)$ is extremely amenable is dense $G_\delta$ (see Theorem 6.4 in \cite{MeTs}). The natural question is to ask whether the same conclusion holds when $G$ is the free group of countable many generators. If the answer is positive, then together with Theorem \ref{genericthm} we will get that $\mathbb{G}$ is indeed extremely amenable.

We were informed by Tsankov that a sufficient requirement to get such a conclusion for $F_\infty$, and so to prove that $\mathbb{G}$ is extremely amenable, would be to prove that in the set of all pseudometrics on $F_\infty$, those pseudometrics whose quotients give finite groups form a dense subset. Curiously, one can readily check that this is an equivalent problem with that one from Question \ref{que2}. So not only does a positive solution of Question \ref{que2} imply a positive solution to Question \ref{que1} (and thus implies that all groups are weakly sofic), it also implies a positive answer to Question \ref{queTsankov}.\\

Our next problem is connected to the properties of the distance $\dist$. It was defined for free groups, resp. for free abelian groups in \cite{Do1}. Analogous definition is possible for any (finitely generated) group. Let $G$ be a finitely generated group and let $|.|:G\rightarrow \Nat$ be its Cayley distance, i.e. a graph metric in the Cayley graph of $G$ for some specified finite set of generators. Let $\mathcal{D}_G$ be the set of all bi-invariant metrics on $G$. For $\rho_1,\rho_2\in \mathcal{D}_G$ we set $$\dist_G(\rho_1,\rho_2)=\inf\{\varepsilon: \forall g\in G (|\rho_1(g,1)-\rho_2(g,1)|\leq \varepsilon\cdot |g|\}.$$ The main result from the previous section relies on the fact that if $G$ is a free group, then $(\mathcal{D}_G,\dist_G)$ is not separable. While in \cite{Do1}, it was proved, resp. it follows from results there, that if $G$ is a free abelian group, then $(\mathcal{D}_G,\dist_G)$ is separable.

We have the following question.
\begin{question}
For which finitely generated groups $G$ we have that $(\mathcal{D}_G,\dist_G)$ is separable?
\end{question}
Is the property of having such a space of metrics separable any related to amenability or (sub)exponential growth? It is apparently somehow connected to the geometry of $G$ in the sense of geometric group theory.\\

The last questions are all about some form of universality. We start with a question asked by Shkarin in \cite{Sk} where it appears as Problem 2.
\begin{question}
Does there exist a separable group with left-invariant metric that is metrically universal for the class of separable groups with left-invariant metric?
\end{question}
Of course, such a group could not be complete (and could not be completed) as then it would belong to a special class of groups that is known to not have a universal object by the result of Malicki (\cite{Ma}). Maybe one could produce a universal group with incomplete metric (whose completion would have to be only a semigroup).\\

We proved in Theorem \ref{nonuniv} that there is no metrically universal locally compact group with bi-invariant metric. However, we may ask for a bounded one.
\begin{question}
Does there exist a separable locally compact group $G$ with bi-invariant metric bounded by $1$ such that every separable locally compact group with bi-invariant metric bounded by $1$ embeds via an isometric isomorphism?
\end{question}
We have a similar question for compact groups. Here the requirement on boundedness of the metric is also obvious.
\begin{question}
Does there exist a compact group $G$ with bi-invariant metric bounded by $1$ such that every compact group with bi-invariant metric bounded by $1$ embeds via an isometric isomorphism?
\end{question}

\end{document}